\newtheorem{theorem}{Theorem}
\newtheorem{definition}[theorem]{Definition}
\newtheorem{lemma}[theorem]{Lemma}
\newtheorem{corollary}[theorem]{Corollary}
\newtheorem{remark}[theorem]{Remark}
\newtheorem{algorithm}[theorem]{Algorithm}
\newtheorem{proposition}[theorem]{Proposition}
\journal{Comput. Methods Appl. Mech. Engrg.
}
\newcommand{\hide}[1]{}
\newcommand{\rev}[2]{#2}
\newcommand{\skipifemptyarg}[1]{\ifthenelse{\isempty{#1}}{}{\left[#1\right]}}
\newcommand{\skipifscalar}[1]{\ifthenelse{\isempty{#1}}{}{;#1}}
\newcommand{\bs}[1]{\boldsymbol{#1}}
\newcommand{\scal}[2]{\bigl(#1,#2\bigr)}
\newcommand{\bildual}{^{-1}}
\newcommand{\bilf}[2]{a\ifthenelse{\isempty{#1}}{}{\bigl(#1,#2\bigr)}}
\newcommand{\bilfi}[2]{a\bildual\ifthenelse{\isempty{#1}}{}{\bigl(#1,#2\bigr)}}
\newcommand{\bilfG}[2]{\mathring{a}\ifthenelse{\isempty{#1}}{}{\bigl(#1,#2\bigr)}}
\newcommand{\bilfGi}[2]{\mathring{a}\bildual\ifthenelse{\isempty{#1}}{}{\bigl(#1,#2\bigr)}}
\newcommand{\bilfN}[2]{a_\VN\ifthenelse{\isempty{#1}}{}{\bigl(#1,#2\bigr)}}
\newcommand{\bilfNi}[2]{a_\VN\bildual\ifthenelse{\isempty{#1}}{}{\bigl(#1,#2\bigr)}}
\newcommand{\bilfDN}[2]{\M{a}_\VN\ifthenelse{\isempty{#1}}{}{\bigl(#1,#2\bigr)}}
\newcommand{\bilfDNi}[2]{\M{a}_\VN\bildual\ifthenelse{\isempty{#1}}{}{\bigl(#1,#2\bigr)}}
\newcommand{\set}[1]{\mathbb{#1}}
\newcommand{\M}[1]{\mat{#1}}
\newcommand{\MB}[1]{\boldsymbol{\mat{#1}}}
\newcommand{\mat}[1]{\mathsf{#1}} 
\newcommand{\V}[1]{\bs{#1}}
\newcommand{\T}[1]{\bs{#1}}
\newcommand{\ol}[1]{\overline{#1}}
\newcommand{\ul}[1]{\underline{#1}}
\newcommand{\oul}[1]{\overline{\underline{#1}}}
\newcommand{\mac}[1]{^{(#1)}}
\newcommand{\sub}[1]{^{#1}}
\newcommand{\incl}[1]{_{(#1)}}
\newcommand{\puc}{\mathcal{Y}}
\newcommand{\lpuc}{Y}
\newcommand{\pVN}{\meas{\VN}}
\newcommand{\ptVN}{\meas{\tVN}}
\newcommand{\per}{\#}
\newcommand{\eff}{{\mathrm{H}}}
\newcommand{\dime}{d}
\newcommand{\imu}{\mathrm{i}}		
\newcommand{\Tz}{\T{\xi}}
\newcommand{\cA}{c_A}
\newcommand{\CA}{C_A}
\newcommand{\dst}{{h}}
\newcommand{\Aeff}{\TA_{\eff}}
\newcommand{\Beff}{\TB_{\eff}}
\newcommand{\AeffN}{\TA_{\eff,\VN}}
\newcommand{\BeffN}{\TB_{\eff,\VN}}
\newcommand{\oAeffN}{\ol{\TA}_{\eff,\VN}}
\newcommand{\oBeffN}{\ol{\TB}_{\eff,\VN}}
\newcommand{\ouAeffN}{\ol{\ul{\TA}}_{\eff,\VN}}
\newcommand{\oAeffdst}{\overline{\TA}_{\eff,\dst}}
\newcommand{\oBeffdst}{\overline{\TB}_{\eff,\dst}}
\newcommand{\cb}[1]{\V{U}^{(#1)}}
\newcommand{\ep}{\ensuremath{\varepsilon}}
\newcommand{\del}{\ensuremath{\delta}}
\newcommand{\alp}{\ensuremath{\alpha}}
\newcommand{\bet}{\ensuremath{\beta}}
\newcommand{\xR}{\set{R}}
\newcommand{\xN}{\set{N}}
\newcommand{\xZ}{\set{Z}}
\newcommand{\xC}{\set{C}}
\newcommand{\xRd}{{\set{R}^{d}}}
\newcommand{\xCd}{\set{C}^{d}}
\newcommand{\xRdd}{\set{R}^{d\times d}}
\newcommand{\xCdd}{\set{C}^{d\times d}}
\newcommand{\xRdN}{\xR^{d\times \VN}}
\newcommand{\xRdtN}{{\xR^{d\times(\tVN)}}}
\newcommand{\xCdN}{\xC^{d\times \VN}}
\newcommand{\xRddspd}{\set{R}_{\mathrm{spd}}^{d\times d}}
\newcommand{\xMN}{\bigl[\set{R}^{d\times\VN}\bigr]^2}
\newcommand{\xMtN}{\bigl[\set{R}^{d\times(\tVN)}\bigr]^2}
\newcommand{\xhMN}{\bigl[\set{C}^{d\times\VN}\bigr]^2}
\newcommand{\xXN}{\xRdN}
\newcommand{\xXtN}{\xR^{d\times(\tVN)}}
\newcommand{\xhXN}{\set{C}^{d\times\VN}}
\newcommand{\xX}{\set{X}}
\newcommand{\hX}{\hat{\set{X}}}
\newcommand{\cH}{\mathscr{H}}
\newcommand{\xNd}{\set{N}^d}
\newcommand{\Zd}{\set{Z}^d}
\newcommand{\ZdmO}{\set{Z}^d \backslash \{\T{0}\}}
\newcommand{\ZN}{\set{Z}_{\VN}}
\newcommand{\ZNdr}{\mathring{\set{Z}}^d_{\VN}}
\newcommand{\ZtNd}{\set{Z}^d_{\tVN}}
\newcommand{\ZNd}{\set{Z}^d_{\VN}}
\newcommand{\ZNNd}{\xZ^d_{\tVN}}
\newcommand{\cU}{\mathscr{U}}
\newcommand{\cE}{\mathscr{E}}
\newcommand{\cJ}{\mathscr{J}}
\newcommand{\rcU}{\mathring{\mathscr{U}}}
\newcommand{\rcE}{\mathring{\mathscr{E}}}
\newcommand{\rcJ}{\mathring{\mathscr{J}}}
\newcommand{\cUN}{\cU_\VN}
\newcommand{\cEN}{\cE_\VN}
\newcommand{\cJN}{\cJ_\VN}
\newcommand{\cENapp}{\tilde{\cE}_\VN}
\newcommand{\cJNapp}{\tilde{\cJ}_\VN}
\newcommand{\Lper}[2]{L^{#1}_\#(\puc\skipifscalar{#2})}
\newcommand{\Ltp}{L^{2}_{\per}}
\newcommand{\Cper}[2]{C^{#1}_\#(\puc\skipifscalar{#2})}
\newcommand{\zmean}{\mean{0}}
\newcommand{\FT}[1]{\widehat{#1}}
\newcommand{\bfun}[1]{\varphi_{#1}}
\newcommand{\bfunN}[1]{\bfun{\T{N},#1}}
\newcommand{\xUN}{\set{U}_\VN}
\newcommand{\xEN}{\set{E}_\VN}
\newcommand{\xJN}{\set{J}_\VN}
\newcommand{\xENapp}{\tilde{\set{E}}_\VN}
\newcommand{\xJNapp}{\tilde{\set{J}}_\VN}
\newcommand{\cT}{\mathscr{T}}
\newcommand{\cTtNd}{\cT^d_\tVN}
\newcommand{\cTNd}{\cT_\VN^d}
\newcommand{\cTNtd}{\tilde{\cT}_\VN^d}
\newcommand{\xNk}{\Vx^{\Vk}_\VN}
\newcommand{\xNm}{\Vx^{\Vm}_\VN}
\newcommand{\TA}{\ensuremath{\T{A}}}
\newcommand{\rTA}{\ensuremath{\mathring{\T{A}}}}
\newcommand{\rTB}{\ensuremath{\mathring{\T{B}}}}
\newcommand{\TAi}{\ensuremath{\T{A}^{-1}}}
\newcommand{\TB}{\ensuremath{\T{B}}}
\newcommand{\mC}{\ensuremath{\T{C}}}
\newcommand{\mD}{\ensuremath{\T{D}}}
\newcommand{\mI}{\ensuremath{{\T{I}}}}
\newcommand{\mM}{\ensuremath{\T{M}}}
\newcommand{\mhG}{\ensuremath{\T{\hat{\Gamma}}}}
\newcommand{\hG}{\ensuremath{\hat{\Gamma}}}
\newcommand{\VN}{\ensuremath{{\V{N}}}}
\newcommand{\tVN}{\ensuremath{{2\VN-\V{1}}}}
\newcommand{\VM}{\ensuremath{{\V{M}}}}
\newcommand{\Ve}{\ensuremath{\V{e}}}
\newcommand{\Vj}{\ensuremath{\V{\jmath}}}
\newcommand{\rVe}{\ensuremath{\mathring{\Ve}}}
\newcommand{\tVe}{\ensuremath{\Ve}}
\newcommand{\tVj}{\ensuremath{\Vj}}
\newcommand{\rVj}{\ensuremath{\mathring{\Vj}}}
\newcommand{\VE}{\ensuremath{\V{E}}}
\newcommand{\VJ}{\ensuremath{\V{J}}}
\newcommand{\Vu}{\ensuremath{\V{u}}}
\newcommand{\Vv}{\ensuremath{\V{v}}}
\newcommand{\Vw}{\ensuremath{\V{w}}}
\newcommand{\Vx}{\ensuremath{\V{x}}}
\newcommand{\mx}{{\V{X}}}
\newcommand{\Vm}{\ensuremath{{\V{m}}}}
\newcommand{\Vn}{\ensuremath{{\V{n}}}}
\newcommand{\Vk}{\ensuremath{{\V{k}}}}
\newcommand{\Vl}{\ensuremath{{\V{l}}}}
\newcommand{\Vh}{\ensuremath{\V{h}}}
\newcommand{\Vo}{\ensuremath{{\V{0}}}}
\newcommand{\Vxi}{\ensuremath{\V{\xi}}}
\newcommand{\MBe}{\ensuremath{\MB{e}}}
\newcommand{\tMBe}{\ensuremath{\tilde{\MB{e}}}}
\newcommand{\MBj}{\ensuremath{\MB{j}}}
\newcommand{\tMBj}{\ensuremath{\tilde{\MB{j}}}}
\newcommand{\MBA}{\ensuremath{\MB{A}}}
\newcommand{\MBu}{\ensuremath{\MB{u}}}
\newcommand{\MBv}{\ensuremath{\MB{v}}}
\newcommand{\MBx}{\ensuremath{\MB{x}}}
\newcommand{\MBB}{\ensuremath{\MB{B}}}
\newcommand{\MBG}{\MB{G}}
\newcommand{\MBhG}{\ensuremath{\MB{\hat{G}}}}
\newcommand{\MhG}{\ensuremath{\M{\hat{G}}}}
\newcommand{\MBF}{\ensuremath{\MB{F}}}
\newcommand{\MBFi}{\ensuremath{\MB{F}^{-1}}}
\DeclareMathOperator{\rect}{rect}
\DeclareMathOperator*{\argmin}{arg\,min}
\DeclareMathOperator{\sinc}{sinc}
\DeclareMathOperator{\tr}{tr}
\newcommand{\IN}{\mathcal{I}_\VN}
\newcommand{\IM}{\mathcal{I}_\VM}
\newcommand{\INi}{\mathcal{I}_\VN^{-1}}
\newcommand{\ItN}{\mathcal{I}_\tVN}
\newcommand{\QN}{\mathcal{Q}_\VN}
\newcommand{\PN}{\mathcal{P}_\VN}
\DeclareMathOperator{\esssup}{ess\,sup}
\newcommand{\norm}[2]{\bigl\| #1 \bigr\|_{#2}}
\newcommand{\meas}[1]{|#1|}
\newcommand{\mean}[1]{\langle#1\rangle}
\newcommand{\D}[1]{\,{\mathrm d}#1}
\newcommand{\conj}[1]{\overline{#1}}
\DeclareMathOperator{\curl}{curl}
\let\div\undefined
\DeclareMathOperator{\div}{div}
\begin{document}

\begin{frontmatter}



\title{Guaranteed upper-lower bounds on homogenized properties by FFT-based Galerkin method
}

\author[label1]{Jaroslav Vond\v{r}ejc\corref{cor1}}\ead{vondrejc@gmail.com}
\author[label2,label3]{Jan Zeman}\ead{
zemanj@cml.fsv.cvut.cz}
\author[label4]{Ivo Marek}\ead{ marekivo@mat.fsv.cvut.cz}

\cortext[cor1]{Corresponding author}

\address[label1]{New Technologies for the Information Society, Faculty of Applied Sciences, University of West Bohemia, Univerzitn\'{i} 2732/8, 306 14 Plze\v{n}, Czech Republic.}

\address[label2]{Department of Mechanics, Faculty of Civil Engineering, Czech Technical  University in Prague, Th\'{a}kurova 7, 166 29 Prague 6, Czech Republic.}

    \address[label3]{Centre of Excellence IT4Innovations, V\v{S}B-TU Ostrava, 17.~listopadu 15/2172, 708 33 Ostrava-Poruba, Czech Republic.}

\address[label4]{Department of Mathematics, Faculty of Civil Engineering, Czech Technical University in Prague, Th\'{a}kurova 7, 166 29 Prague 6, Czech Republic.}
 
\begin{abstract}
Guaranteed upper-lower bounds on homogenized coefficients, arising from the periodic cell problem, are calculated in a scalar elliptic setting. Our approach builds on the recent variational reformulation of the Moulinec-Suquet (1994) Fast Fourier Transform (FFT) homogenization scheme by Vond\v{r}ejc et al. (2014), which is based on the conforming Galerkin approximation with trigonometric polynomials.
Upper-lower bounds are obtained by adjusting the primal-dual finite element framework developed independently by Dvo\v{r}\'{a}k (1993) and Wi\c{e}ckowski (1995) to the FFT-based Galerkin setting.
We show that the discretization procedure differs for odd and non-odd number of grid points. Thanks to the Helmholtz decomposition inherited from the continuous formulation, the duality structure is fully preserved for the odd discretizations. In the latter case, a more complex primal-dual structure is observed due to presence of the trigonometric polynomials associated with the Nyquist frequencies.
These theoretical findings are confirmed with numerical examples.
To conclude, the main advantage of the FFT-based approach over conventional finite-element schemes is that the primal and the dual problems are treated on the same basis, and this property can be extended beyond the scalar elliptic setting.
\end{abstract}

\begin{keyword}
Upper-lower bounds \sep Numerical homogenization \sep Galerkin approximation \sep Trigonometric polynomials \sep Fast Fourier Transform
\end{keyword}
\end{frontmatter}


\section{Introduction}
\label{sec:introduction}
This work is dedicated to the determination of guaranteed upper-lower bounds on homogenized (effective) material coefficients originating from the theory of homogenization of periodic media. These bounds, which are essential for the development of reliable multi-scale simulations \cite{OBBH2003research-directions}, are calculated with an FFT-based Galerkin approach, a method introduced by the authors in \cite{VoZeMa2014FFTH} as a variational reformulation of the fast iterative scheme proposed by Suquet and Moulinec in~\cite{Moulinec1994FFT}. Since our objective is to develop a general methodology, we restrict our attention to scalar linear elliptic problems. Despite this limitation, we believe that our results are relevant to various FFT-based analyses of complex material systems e.g.~\cite[and references therein]{Montagnat2014multiscale,Sliseris2014,Stein2014fatigue}.

In this introduction, we briefly describe the basic framework of periodic homogenization leading to a cell problem, a variational problem that defines the homogenized matrix. We then discuss possible methods for its numerical treatment with an emphasis on FFT-based schemes and approaches and connect them to techniques for obtaining guaranteed bounds on the homogenized matrix. Finally, we introduce the structure of the paper.

\subsection{Periodic cell problem}
Using the notation introduced in Section~\ref{sec:preliminaries}, let us consider an open set $\Omega\subset\xRd$ with a Lipschitz boundary and a positive parameter $\ep>0$ denoting the characteristic size of microstructure. We search for the scalar quantity $u^\ep:\Omega\rightarrow\xR$,
$u^\ep\in H^1_0(\Omega)$,
satisfying the variational equation 
\begin{align}
\label{eq:weak_form_oscillating}
\int_\Omega \scal{\TA^{\ep}(\mx)\nabla u^{\ep}(\mx)}{ \nabla v(\mx)}_{\xRd}\D{\mx}
= F(v)
\quad\forall v\in H^1_0(\Omega),
\end{align}
where $\scal{\cdot}{\cdot}_{\xRd}:\xRd\times\xRd\rightarrow\xRd$ denotes the standard scalar product on $\xRd$.
The linear functional $F:H^1_0(\Omega)\rightarrow\xR$ covers both the prescribed source terms and various boundary conditions,
and $\TA^{\ep}:\Omega\rightarrow\xRdd$ represents the symmetric, uniformly elliptic, and bounded matrix field of material coefficients, i.e. $\TA^{\ep}\in L^{\infty}(\Omega;\xRddspd)$. For the purpose of this work, we focus on periodic media, for which
\begin{align*}
\TA^{\ep}(\mx) = \TA\left(\frac{\mx}{\ep}\right)\quad\text{for }\mx\in\Omega,
\end{align*}
where the symmetric and uniformly elliptic matrix $\TA:\xRd\rightarrow\xRdd$ is $\puc$-periodic with $\puc=\prod_{\alp=1}^d \bigl(-\frac{Y_\alp}{2},\frac{Y_\alp}{2}\bigr)\subset{\xRd}$ denoting the periodic cell, cf.~\eqref{eq:A}. Hence, the coefficients $\TA^\ep$ develop finer oscillations with a decreasing microstructural parameter~$\ep$.

The unique solution to \eqref{eq:weak_form_oscillating} exists thanks to the Lax-Milgram lemma, and thus it can be numerically approximated by, for example, the standard Finite Element Method (FEM). However, in order to obtain a satisfactory approximation, the element size must satisfy $h\ll \ep \meas{\puc} \ll 1$, which renders the direct approach infeasible due to excessive computational demands. 

Alternatively, the complexity of \eqref{eq:weak_form_oscillating} can be reduced by homogenization. It involves a limit process for $\ep\rightarrow 0$, leading to the decomposition of the problem into the macroscopic and the microscopic parts.
This limit passage can be performed by various techniques, such as formal asymptotic expansion \cite{Bensoussan1978per_structures}, 
two-scale convergence methods \cite{nguetseng1989general,allaire1992homogenization}, or periodic unfolding \cite{cioranescu2008unfolding}.

Irrespective of the method used, we find that the solutions $u^\ep$ converge weakly in $H^1_0(\Omega)$ to a limit state $u_\eff$ described by the macroscopic variational equation
\begin{align*}
\int_\Omega \scal{\TA_\eff\nabla u_\eff(\mx)}{\nabla v(\mx)}_{\xRd}\D{\mx}
= F(v)
\quad\forall v\in H^1_0(\Omega).
\end{align*}
Here, $\Aeff\in\xRddspd$ represents the homogenized matrix of  coefficients $\TA^{\ep}$ that is described by the microscopic variational formulation defined on the periodic cell $\puc$ only
\begin{align}
\label{eq:micro}
\scal{\Aeff \VE}{\VE}_{\xRd} 
&= \min_{v\in H_{\per,\zmean}^1(\puc)} \frac{1}{\meas{\puc}} \int_\puc \scal{\TA(\Vx)[\VE+\nabla v(\Vx)]}{[\VE+\nabla v(\Vx)]}_{\xRd}\D{\Vx},
\end{align}
where $H_{\per,\zmean}^1(\puc)$ denotes the space of $\puc$-periodic functions with square integrable gradients and zero mean, cf.~Section~\ref{sec:preliminaries}, and \eqref{eq:micro} must hold for any vector $\VE\in\xRd$.

\subsection{FFT-based homogenization methods}

The numerical solution of the cell problem \eqref{eq:micro}, particularly an approximation to the homogenized matrix $\Aeff$, can be carried out by various approaches such as Finite Differences~\cite{Flaherty1973,Panasenko1988,Garboczi:1998:FEFD},
Finite Elements~\cite{Guedes1990,Michel1999,Geers2010},
Boundary Elements \cite{Eischen1993BEM,Kaminski1999BEM,Prochazka2001BEM},
or Fast Multipole Methods~\cite{Greengard1998,Greengard2006,Helsing2011effective}.
Here, we focus on FFT-based methods, efficient solvers developed for cell problems with coefficients $\TA$ defined by general high-resolution images.

The original FFT-based formulation proposed by Moulinec and Suquet in \cite{Moulinec1994FFT} is based on an iterative solution to the integral Lippmann-Schwinger equation corresponding to~\eqref{eq:micro} by the Neumann series expansion. Efficiency of the algorithm is achieved by approximating and evaluating the action of the integral kernel by the Fast Fourier Transform~(FFT) algorithm in only $\mathcal{O}(N\log N)$ operations, as both the data of the problem and its solution are defined on a regular periodic grid.
A theoretical background to the original algorithm has been provided only recently by interpreting the method as a suitable Galerkin scheme and proving the convergence of approximate solutions to the continuous one. 
In particular, the work of Brisard and Dormieux~\cite{Brisard2010FFT,Brisard2012FFT} utilizes the Hashin-Shtrikman variational principles~\cite{hashin1962elast} combined with pixel or voxel-wise constant basis functions.
Our approach~\cite{VoZeMa2014FFTH} builds on standard variational principles and approximation spaces of trigonometric polynomials \rev{}{along with finite element method-based convergence results for smooth data}, which have been generalized for rough coefficients in \cite{Schneider2014convergence}.
Besides, several improvements of the original solver, leading to faster convergence \cite{Eyre1999FNS,Vinogradov2008AFFT,ZeVoNoMa2010AFFTH,Brown2002DFT} or higher robustness \cite{Michel2000CMB,Monchiet2012polarization}, have been proposed along with heuristic approaches to increase the accuracy of local fields based on the incorporation of the so-called shape functions \cite{Monchiet2012polarization,Monchiet2013conduct} or modification of the integral kernel \cite{willot2013fourier}.

The present work is based on our recent study \cite{VoZeMa2014FFTH}, which shows that the original Moulinec-Suquet scheme is equivalent to a Galerkin discretization of a weak solution to the cell problem \eqref{eq:micro}, when the approximation space is spanned by trigonometric polynomials and a suitable numerical quadrature scheme is used to evaluate the linear and bilinear forms.
We also demonstrated that the system of linear equations arising from the discretization can be efficiently solved with Krylov solvers, cf.~\cite{ZeVoNoMa2010AFFTH,VoZeMa2012LNSC}, and that the action of the system matrix can be efficiently evaluated by FFT.
To minimize technicalities, the analysis was restricted to the primal formulation and to grids with odd number of points along each coordinate.

\subsection{Upper and lower bounds on homogenized matrix}

The theory of rigorous bounds on the homogenized matrix has been the subject of many studies in analytical homogenization theories. These techniques employ the primal-dual formulations of \eqref{eq:micro} under limited --- and often uncertain --- information on the material coefficients $\TA$.
Specific examples include the Voigt \cite{voigt1910lehrbuch}, Reuss \cite{Reuss1929}, and Hashin-Shtrikman  bounds \cite{hashin1963variational}; see the monographs \cite{Milton2002TC,torquato2002random,cherkaev2000variational,dvorak2012micromechanics} for a more complete overview.
Because the bounds rely on limited data, their performance rapidly deteriorates for highly-contrasted media.

Relatively less attention has been given to the upper-lower bounds arising from an approximate solution to \eqref{eq:micro} obtained by a numerical method. To our knowledge, the pioneering work relevant to FEM has been made by Dvořák and Haslinger \cite{Dvorak1993master,Dvorak1995RNM,haslinger1995optimum}, who proposed a general framework for elliptic problems, developed unified primal-dual $p$-version solvers for the two-dimensional scalar equation, and  applied them later to the optimal design of matrix-inclusion composites. Error estimates and convergence rates of homogenized properties are provided there together with a reformulation using stream functions which leads to a dual formulation with the same structure as the primal one in the two-dimensional scalar setting.
In more general situations, mixed approaches are usually needed to approximate the dual formulation, as demonstrated by Wi\c{e}ckowski \cite{Wieckowski1995DFEM} for linear elasticity. 

Let us note that FFT-based bounds on a homogenized properties have also been investigated independently in \cite{kabel2012precisebounds,bignonnet2014fft}, utilizing the Brisard-Dormieux approach~\cite{Brisard2012FFT,Brisard2010FFT}. In this case, however, the evaluation of guaranteed bounds involves an infinite sum that converges very slowly, and a truncation of the sum violates the structure of the guaranteed bounds; see~\cite[Section~7]{VoZeMa2014FFTH} for a related discussion. This limitation is overcome here by a suitable integration rule developed in Section~\ref{sec:evaluation_bounds}.

\subsection{Content of the paper}

The aim of this paper is twofold. First, we demonstrate that the FFT-Galerkin method is directly applicable to the Dvořák-Haslinger setting~\cite{Dvorak1993master,Dvorak1995RNM,haslinger1995optimum} and that it naturally generates primal and dual problems with the same structure. We then extend our results from \cite{VoZeMa2014FFTH} to general grids by carefully treating the Nyquist frequencies. To this purpose, the paper is organized as follows.

Section~\ref{sec:preliminaries} summarizes useful facts on periodic functions, the Fourier transform, and the Helmholtz decomposition, which also play a fundamental role in the continuous and discretized primal-dual formulations analyzed throughout the paper. 

Section~\ref{sec:homog_duality_bounds} provides a continuous formulation of the homogenization problem together with their main properties. Then the results by Dvořák \cite{Dvorak1993master,Dvorak1995RNM} are employed. In particular, an abstract duality result is formulated here in order to cover both continuous and discrete problems, complemented with the theory for accurate upper-lower bounds based on conforming approximations to the homogenization problem.

Section~\ref{sec:trig_main} deals with the spaces of trigonometric polynomials \cite{SaVa2000PIaPDE}, which are used to approximate the homogenization problem. 
Our exposition follows the developments presented in \cite{VoZeMa2014FFTH} for an odd number of grid points and extends it to the general case. 

Section~\ref{sec:gani} is dedicated to discrete formulations arising from the Galerkin approximation with numerical integration. Here, the emphasis is again on the extension of results in \cite{VoZeMa2014FFTH} to general grids such that conforming approximations are obtained. The relations between the primal-dual formulations are investigated using the duality arguments from Section~\ref{sec:duality}.

Section~\ref{sec:evaluation_bounds} contributes to methodology for the evaluation of the upper-lower bounds on homogenized properties; the details are provided for general matrix-inclusion composites.

Section~\ref{sec:computational_issues} gathers several computational aspects with an emphasis on effective implementation.

Section~\ref{sec:numerical_experiments} contains numerical examples that confirm the theoretical findings on the structure of the upper-lower bounds and differences between discretization using odd and even grids. Performance of the method is demonstrated with a real-world material described by a high-resolution image.

Section~\ref{sec:conclusion} summarizes the most interesting results, while \ref{sec:appendix_duality} 
concludes the paper by proving the abstract duality result from Section~\ref{sec:duality}.

Let us remark that throughout the paper, we attempt to make a systematic distinction among infinite-dimensional variables, their finite-dimensional approximations, and fully discrete~(matrix) representations. Although this approach leads to a somewhat  more involved notation, we have found it to be very helpful in understanding the theoretical basis of FFT-based homogenization algorithms as well as connections among the many variants of FFT-based algorithms available in the literature.

\section{Notation and preliminaries}
\label{sec:preliminaries}
In this section, we introduce our notation and recall some useful facts related to matrix analysis, Section~\ref{sec:vectors_matrices}, and to spaces of periodic functions and the Fourier transform, Section~\ref{sec:periodic_functions_and_fourier}, used throughout the paper. Section~\ref{sec:helmoholtz} is dedicated to the Helmholtz decomposition of vector-valued periodic functions and its description by orthogonal projections, which will be essential for the duality arguments in both discrete and continuous settings.

In general, number spaces are denoted with double-struck symbols, e.g. $\xN$, $\xZ$, $\xR$, or $\xC$, operators are denoted with calligraphic letters, e.g. $\mathcal{I}$, $\mathcal{Q}$, $\mathcal{P}$, or $\mathcal{G}$, and function spaces are denoted in the standard way, e.g.  $\Lper{2}{}$, $\Cper{0}{\xRd}$, or using a script font, e.g. $\mathscr{U}$, $\mathscr{E}$, $\cH$, or $\cT$.

\subsection{Vectors and matrices}
\label{sec:vectors_matrices}

In the sequel, $d$ is reserved for the dimension of the model problem, assuming $d=2,3$.
To keep the notation compact, $\xX$
abbreviates the space of scalars, vectors, or matrices, i.e. $\xR$, $\xRd$, or $\xRdd$, and $\hX$ is used for their complex counterparts, i.e.~$\xC$, $\xCd$, or $\xC^{d \times d}$. Vectors and matrices are denoted by boldface letters, e.g.
$\Vu,\Vv \in \xRd$ or $\mM \in \xRdd$, with Greek letters used when referring to
their entries, e.g. $\mM = (M_{\alp\beta})_{\alp,\beta=1,\ldots,\dime}$.
Matrix $\mI = \bigl(\del_{\alp\beta}\bigr)_{\alp\beta}$ denotes the identity matrix where the symbol $\del_{\alp\beta}$ is reserved for the Kronecker delta, defined as
$\delta_{\alp\beta} = 1$ for $\alp = \beta$ and $\delta_{\alp\beta} = 0$ otherwise. 

As usual, matrix-matrix product $\T{L}\mM$, matrix-vector product $\mM \Vu$, and outer product $\Vu \otimes \Vv$ refer to 
\begin{align*}
(\T{L}\mM)_{\alp\beta} &= \sum_\gamma L_{\alp\gamma}M_{\gamma\beta}
&
(\mM \Vu)_\alp & = \sum_\beta M_{\alp\beta} u_\beta,
&
(\Vu \otimes \Vv)_{\alp\beta}
&=
u_\alp v_\beta,
\end{align*}
where we assume that $\alp$ and $\beta$ range from $1$ to $d$ for the sake of brevity.
Moreover, we endow the spaces with the standard inner products and norms, e.g. 
\begin{align}\label{eq:basic_matrix_vector_operations}
\scal{\Vu}{\Vv}_{\xC^{\dime}}
&= \sum_\alp u_\alp
\overline{v_\alp},
&
\|\Vu\|_{\xCd}^2 & = \scal{\Vu}{\Vu}_{\xCd},
&
\|\mM\|_{\xCdd} &= \max_{\Vu\neq\T{0}} \frac{\|\mM\Vu\|_{\xCd}}{\|\Vu\|_{\xCd}}.
\end{align}

The set $\xRddspd\subset{\xRdd}$ denotes the space of symmetric positive definite matrices satisfying
\begin{align*}
 M_{\alp\beta} &= M_{\beta\alp}\quad\text{for all }\alp,\beta,
 &
 \scal{\mM\Vu}{\Vu}_{\xRd}&>0\quad\text{for all }\Vu\in\xRd\text{ such that }\Vu\neq\Vo.
\end{align*}
In this space, the trace operator,
$\tr{\mM} = \sum_\alp M_{\alp\alp}\text{ for }\mM\in\xRdd$,
becomes an equivalent norm to \eqref{eq:basic_matrix_vector_operations} as it equals to the sum of eigenvalues, cf. \cite[Section 5.6]{horn2012matrix}.
The L\"{o}wner partial order, cf.~\cite[Section~7.7]{horn2012matrix}, of symmetric positive definite matrices will be found useful, i.e. for  $\T{L},\T{M}\in\xRddspd$ we write
\begin{align*}
 \T{L}\preceq\T{M}\quad\text{if}\quad\scal{\T{L} \Vu}{\Vu}_{\xRd}\leq \scal{\T{M} \Vu}{\Vu}_{\xRd}\quad\text{for all }\Vu\in\xRd.
\end{align*}
We also systematically use the inverse inequality property
\begin{align}\label{eq:spd_inverse_ineq}
 \T{L} \preceq \T{M}
 \quad \Longleftrightarrow\quad
  \T{M}^{-1}\preceq \T{L}^{-1}
\end{align}
for  $\T{L},\T{M}\in\xRddspd$, cf. \cite[Corollary~7.7.4.(a)]{horn2012matrix}.

\subsection{Periodic functions and Fourier transform}
\label{sec:periodic_functions_and_fourier}
We consider cells in the form $\puc = \prod_\alp \bigl(-\frac{\lpuc_\alp}{2},
\frac{\lpuc_\alp}{2}\bigr)$ for $\V{\lpuc}\in\xRd$ such that $\lpuc_\alp>0$. Then, a function $\Vu : \xRd \rightarrow \xX$ is $\puc$-periodic if
\begin{align*}
\Vu(\Vx + \sum\nolimits_\alp \lpuc_\alp k_\alp)
= \Vu(\Vx)\quad
\text{for all }
\Vx \in \puc
\text{ and all }
\Vk \in \Zd.
\end{align*}
The space $C_{\per}(\puc;\xX)$ collects all continuous $\puc$-periodic functions $\xR^d\rightarrow\xX$.
For $p \in \{2,\infty\}$,
\begin{align*}
\Lper{p}{\xX}
=
\left\{
\Vu :\puc\rightarrow\xX
: 
\Vu \text{ is $\puc$-periodic, measurable, and }\|\Vu\|_{\Lper{p}{\xX}}<\infty
\right\}
\end{align*}
denotes the Lebesgue spaces equipped with the norm
\begin{align*}
\norm{\Vu}{\Lper{p}{\xX}}
=
\begin{cases}
\esssup_{\Vx\in\puc} \norm{\Vu(\Vx)}{\xX} & \text{for } p=\infty, 
\\
\left(\meas{\puc}^{-1}
\displaystyle
\int_\puc
\norm{\Vu(\Vx)}{\xX}^2 \D{\Vx}
\right)^{1/2} & \text{for }p=2.
\end{cases}
\end{align*}
where $\meas{\puc} = \prod_\alp \lpuc_\alp$ denotes the Lebesgue measure of the cell $\puc$.

For the sake of brevity, we write $\Lper{p}{}$ instead of $\Lper{p}{\xR}$, and often shorten $\Lper{2}{\xRd}$ to $\Ltp$ when referring to the norms and the inner product.

The Fourier transform of $\Vu \in \Lper{2}{\xX}$ is given by
\begin{align}\label{eq:FT_def}
\FT{\Vu}( \Vk )
=
\conj{\FT{\Vu}(-\Vk)}
=
\frac{1}{\meas{\puc}}
\int_\puc
\Vu(\Vx)
\bfun{-\Vk}(\Vx)
\D{\Vx}
\in \FT{\xX}\quad
\text{for }
\Vk \in \Zd,
\end{align}
where the Fourier trigonometric polynomials,
\begin{align*}
\bfun{\Vk}(\Vx)
=
\exp{
\Bigl(
  2\pi\imu
    \scal{\Tz(\Vk)}{\Vx}_{\xRd} 
\Bigr)}\quad
\text{for }
\Vx \in \puc,
\Vk \in \Zd,
\text{ and with }
\Tz(\Vk) = (k_\alp / \lpuc_\alp)_\alp,
\end{align*}
form an orthonormal basis $\{\bfun{\Vk}\}_{\Vk \in \ZN}$ of
$\Lper{2}{}$, i.e.
\begin{align}
\label{eq:Fourier_basis_functions_orthogonality}
\scal{\bfun{\Vk}}{\bfun{\T{m}}}_{\Lper{2}{}}
&=
\delta_{\Vk \Vm}\quad\text{for }\Vk,\Vm \in \Zd,
\end{align}
cf.~\cite[pp.~89--91]{rudin1986real}.
Thus, every function $\Vu \in \Lper{2}{\xX}$ can be expressed in the form
\begin{align*}
\Vu(\Vx) 
=
\sum_{\Vk \in \Zd}
\FT{\Vu}( \Vk )
\bfun{\Vk}(\Vx)
\quad\text{for }
\Vx \in \puc.
\end{align*}

\rev{}{The spaces $\Lper{2}{\xX}$ and $\Lper{2}{\FT{\xX}}$ are also Hilbert spaces endowed with the standard inner products, e.g.
\begin{align*}
\scal{\Vu}{\Vv}_{\Lper{2}{\FT{\xX}}} &=
\frac{1}{\meas{\puc}}\int_{\puc} \scal{\Vu(\Vx)}{\Vv(\Vx)}_{\FT{\xX}}\D{\Vx}
= \sum_{\Vk\in\Zd} \scal{\hat{\Vu}(\Vk)}{\hat{\Vv}(\Vk)}_{\FT{\xX}},
\end{align*}
which can be expressed, thanks to Parseval's theorem, in both real and Fourier domains.} 
The mean value of function $\Vu\in \Lper{2}{\xX}$ over periodic cell $\puc$ is denoted as 
\begin{align*}
\mean{\Vu} = \frac{1}{|\puc|}\int_{\puc}\Vu(\Vx)\D{\Vx} = \hat{\Vu}(\Vo) \in \xX
\end{align*}
and corresponds to the zero-frequency Fourier coefficient.

\subsection{Helmholtz decomposition for periodic functions}
\label{sec:helmoholtz}
Operator $\oplus$ denotes the direct sum of mutually orthogonal subspaces, e.g. $\xRd=\cb{1}\oplus\cb{2}\oplus\dotsc\oplus\cb{d}$ for vectors  $\cb{\alp} = (\del_{\alp\beta})_\beta$.
By the Helmholtz decomposition \cite[pages~6--7]{Jikov1994HDOIF}, $\Lper{2}{\xRd}$ admits an orthogonal decomposition
\begin{align}\label{eq:Helmholtz_L2}
\Lper{2}{\xRd}=\cU\oplus\cE\oplus\cJ
\end{align}
into the subspaces of constant, zero-mean curl-free, and zero-mean divergence free fields
\begin{subequations}\label{eq:subspaces_UEJ}
\begin{align}
\cU &= \{\Vv\in \Lper{2}{\xRd}:\Vv(\Vx) = \mean{\Vv}\text{ for all }\Vx\in\puc\},
\\
\label{eq:cE_def}
\cE &= \{\Vv\in \Lper{2}{\xRd}:\curl\Vv = \Vo,\mean{\Vv}=\Vo\},
\\
\cJ &= \{\Vv\in \Lper{2}{\xRd}:\div\Vv = 0,\mean{\Vv}=\Vo\}.
\end{align}
\end{subequations}
Here, the differential operators $\curl$ and $\div$
are understood in the Fourier sense, so that
\begin{align*}
(\curl \Vu)_{\alp\beta}
& = \sum_{\Vk\in\Zd} 2\pi\imu \bigl(\xi_\beta(\Vk)\hat{u}_\alp(\Vk) - \xi_\alp(\Vk)\hat{u}_\beta(\Vk)\bigr) \varphi_\Vk,
&
\div \Vu
& = \sum_{\Vk\in\Zd} 2\pi\imu \scal{\Vxi(\Vk)}{\hat{\Vu}(\Vk)}_{\xCd} \varphi_\Vk,
\end{align*}
cf.~\cite[pp.~2--3]{Jikov1994HDOIF} and \cite{SaVa2000PIaPDE}. Furthermore, the constant functions from $\cU$ are identified with vectors from $\xRd$.

Alternatively, the subspaces arising in the Helmholtz decomposition \eqref{eq:subspaces_UEJ} can be characterized by the orthogonal projections introduced next.
\begin{definition}\label{def:projection}
Let $\mathcal{G}\sub{\cU}$, $\mathcal{G}\sub{\cE}$, and $\mathcal{G}\sub{\cJ}$ denote operators $\Lper{2}{\xRd}\rightarrow \Lper{2}{\xRd}$ defined via
\begin{align*}
\mathcal{G}\sub{\bullet} [\Vv](\Vx) &= 
\sum_{\Vk\in\Zd} \mhG\sub{\bullet}(\Vk) \hat{\Vv}(\Vk) \varphi_{\Vk}(\Vx)\quad\text{for }\bullet\in\{\cU,\cE,\cJ\},
\end{align*}
where the matrices of Fourier coefficients $\mhG\sub{\bullet}(\Vk)\in\xR^{d\times d}$ read
\begin{align*}
\mhG\sub{\cU}(\Vk) &=
\begin{cases}
\mI
\\
\Vo\otimes\Vo
\end{cases}
&
\mhG\sub{\cE}(\Vk) &=
\begin{cases}
\Vo\otimes\Vo
\\
\frac{\Vxi(\Vk)\otimes\Vxi(\Vk)}{\scal{\Vxi(\Vk)}{\Vxi(\Vk)}_{\xRd}}
\end{cases}
&
\mhG\sub{\cJ} (\Vk) &=
\begin{cases}
\Vo\otimes\Vo,&\text{for }\Vk = \Vo
\\
\mI - \frac{\Vxi(\Vk)\otimes\Vxi(\Vk)}{\scal{\Vxi(\Vk)}{\Vxi(\Vk)}_{\xRd}}&\text{for }\Vk\in\ZdmO
\end{cases}.
\end{align*}
\end{definition}
\begin{lemma}\label{lem:Helmholtz_decomp}
The operators $\mathcal{G}\sub{\cU}$, $\mathcal{G}\sub{\cE}$, and $\mathcal{G}\sub{\cJ}$ are mutually orthogonal projections with respect to the inner product on $\Lper{2}{\xRd}$, on $\cU$,$\cE$, and $\cJ$.
\end{lemma}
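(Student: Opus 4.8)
The plan is to verify the projection properties working entirely in Fourier space, where each operator $\mathcal{G}_\bullet$ acts on the Fourier coefficient $\hat{\Vv}(\Vk)$ by left-multiplication with the matrix $\mhG_\bullet(\Vk)$. Since the Fourier basis $\{\varphi_\Vk\}$ is orthonormal in $\Lper{2}{}$ (cf.~\eqref{eq:Fourier_basis_functions_orthogonality}) and the inner product on $\Lper{2}{\xRd}$ decomposes as $\scal{\Vu}{\Vv}_{\Ltp} = \sum_{\Vk\in\Zd}\scal{\hat{\Vu}(\Vk)}{\hat{\Vv}(\Vk)}_{\xCd}$ by Parseval, all the required identities reduce to pointwise (in $\Vk$) statements about the $d\times d$ matrices $\mhG_\bullet(\Vk)$. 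The key observation is that for each fixed $\Vk$ these three matrices are exactly the orthogonal projectors in $\xCd$ onto three mutually orthogonal subspaces: the constants are handled only at $\Vk=\Vo$ by $\mI$, while for $\Vk\neq\Vo$ the rank-one matrix $\Vxi(\Vk)\otimes\Vxi(\Vk)/\scal{\Vxi(\Vk)}{\Vxi(\Vk)}_{\xRd}$ projects onto $\mathrm{span}\,\Vxi(\Vk)$ and its complement $\mI - \Vxi(\Vk)\otimes\Vxi(\Vk)/\scal{\Vxi(\Vk)}{\Vxi(\Vk)}_{\xRd}$ projects onto $\Vxi(\Vk)^\perp$.

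Concretely, I would proceed in the following steps. First, establish that each $\mhG_\bullet(\Vk)$ is symmetric (obvious from the definitions, since $\mI$, $\Vo\otimes\Vo$, and the outer product $\Vxi\otimes\Vxi$ are all symmetric real matrices) and idempotent, i.e. $\mhG_\bullet(\Vk)^2 = \mhG_\bullet(\Vk)$; for the rank-one pieces this is the direct computation $(\Vxi\otimes\Vxi)(\Vxi\otimes\Vxi) = \scal{\Vxi}{\Vxi}_{\xRd}(\Vxi\otimes\Vxi)$, and for the complement it follows immediately. Symmetry plus idempotence yields that $\mathcal{G}_\bullet$ is self-adjoint and idempotent on $\Lper{2}{\xRd}$, hence an orthogonal projection. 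Second, verify mutual orthogonality: $\mhG_{\bullet_1}(\Vk)\,\mhG_{\bullet_2}(\Vk) = \Vo\otimes\Vo$ for $\bullet_1\neq\bullet_2$, checking the cases $\Vk=\Vo$ and $\Vk\neq\Vo$ separately — at $\Vk=\Vo$ only $\mhG_\cU$ is nonzero, and for $\Vk\neq\Vo$ one uses $(\Vxi\otimes\Vxi)(\mI - \Vxi\otimes\Vxi/\scal{\Vxi}{\Vxi}_{\xRd}) = \Vo\otimes\Vo$ together with $\mhG_\cU(\Vk)=\Vo\otimes\Vo$. This gives $\mathcal{G}_{\bullet_1}\mathcal{G}_{\bullet_2} = 0$, the statement that the ranges are mutually orthogonal. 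Third, identify the ranges: a field $\Vv$ satisfies $\mathcal{G}_\cE\Vv = \Vv$ iff $\hat{\Vv}(\Vo)=\Vo$ and $\hat{\Vv}(\Vk)$ is parallel to $\Vxi(\Vk)$ for all $\Vk\neq\Vo$, which by the Fourier-sense definition of $\curl$ is precisely the condition $\curl\Vv=\Vo$, $\mean{\Vv}=\Vo$, i.e. $\mathrm{range}\,\mathcal{G}_\cE=\cE$; similarly $\hat{\Vv}(\Vk)\perp\Vxi(\Vk)$ means $\scal{\Vxi(\Vk)}{\hat{\Vv}(\Vk)}_{\xCd}=0$, i.e. $\div\Vv=0$, giving $\mathrm{range}\,\mathcal{G}_\cJ=\cJ$, and trivially $\mathrm{range}\,\mathcal{G}_\cU=\cU$. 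Finally, $\mhG_\cU(\Vk)+\mhG_\cE(\Vk)+\mhG_\cJ(\Vk)=\mI$ for every $\Vk$ confirms $\mathcal{G}_\cU+\mathcal{G}_\cE+\mathcal{G}_\cJ = \mathrm{Id}$, consistent with \eqref{eq:Helmholtz_L2}.

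The computations are all elementary, so there is no real obstacle; the only point requiring a little care is the bookkeeping at the zero frequency $\Vk=\Vo$, where the curl-free/divergence-free characterizations degenerate and the decomposition is carried entirely by $\mathcal{G}_\cU$. One must also be slightly careful that the Fourier multipliers are real symmetric matrices but act on complex coefficients $\hat{\Vv}(\Vk)\in\xCd$, so "self-adjoint" is meant with respect to the Hermitian inner product on $\xCd$; since real symmetric matrices are Hermitian, this causes no difficulty, and the conjugate-symmetry constraint $\hat{\Vv}(-\Vk)=\conj{\hat{\Vv}(\Vk)}$ is preserved because $\mhG_\bullet(-\Vk)=\mhG_\bullet(\Vk)$ (as $\Vxi(-\Vk)=-\Vxi(\Vk)$ and the outer product is quadratic in $\Vxi$), so the projected fields remain real-valued.
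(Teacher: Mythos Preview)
Your proposal is correct and follows essentially the same approach as the paper: working in Fourier space and reducing everything to pointwise properties of the symbols $\mhG_\bullet(\Vk)$. The paper's own proof is in fact much terser --- it cites \cite[Lemma~3.2]{VoZeMa2014FFTH} for the detailed argument that $\mathcal{G}_\cE$ is an orthogonal projection onto $\cE$ and then appeals to the mutual orthogonality of the matrices $\mhG_\bullet(\Vk)$ (referencing \cite[Section~12.1]{Milton2002TC}) for the remaining cases --- so your write-up is effectively a self-contained expansion of what the paper only sketches by citation.
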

\begin{proof}
In \cite[Lemma~3.2]{VoZeMa2014FFTH}, we show in detail that $\mathcal{G}\sub{\cE}$ is an orthogonal projection onto $\cE$. The remaining cases follow from the mutual orthogonality of $\mhG\sub{\bullet}(\Vk):\xCd\rightarrow\xCd$ for all $\Vk\in\xZ^d$ and with $\bullet\in\{\cU,\cE,\cJ\}$, cf. \cite[Section~12.1]{Milton2002TC}.
\end{proof}
\section{Homogenization, duality, and upper-lower bounds}
\label{sec:homog_duality_bounds}

In the present section, we define homogenized matrices via variational problems and collect several useful facts about their evaluation in the primal and the dual formulations. The connection between the matrices is established in Section \ref{sec:duality} using duality arguments, which immediately provide their basic properties along with the Voigt-Reuss bounds in Section~\ref{sec:comments_on_homogenization}. Section~\ref{sec:upper-lower_bounds_general} is dedicated to the determination of accurate upper-lower bounds based on conforming primal-dual minimizers, following the earlier developments by Dvo\v{r}\'{a}k~\cite{Dvorak1993master,Dvorak1995RNM}.

Here and in the sequel, matrix field $\TA:\puc\rightarrow\xRddspd$ is reserved for material coefficients, which are required to be essentially bounded, symmetric, and uniformly elliptic
\begin{align}
\label{eq:A}
\TA \in \Lper{\infty}{\xRddspd},
&&
\cA \norm{\Vv}{\xRd}^2
\leq 
\scal{\TA(\Vx)\Vv}{\Vv}_{\xRd}
\leq \CA \norm{\Vv}{\xRd}^2
\end{align}
a.e.~in $\puc$ for all $\Vv \in \xRd$ with  $0 < \cA \leq \CA < +\infty$;
by \eqref{eq:spd_inverse_ineq} the inverse coefficients satisfy
\begin{align*}
\TAi \in 
\Lper{\infty}{\xRddspd},
&&
\frac{1}{\CA} \norm{\Vv}{\xRd}^2
\leq 
\scal{\TAi(\Vx)\Vv}{\Vv}_{\xRd}
\leq \frac{1}{\cA} \norm{\Vv}{\xRd}^2
\end{align*}
a.e.~in $\puc$ for all $\Vv \in \xRd$.
We will also consider bilinear forms $\bilf{}{}:\Lper{2}{\xRd}\times\Lper{2}{\xRd}\rightarrow\xR$ and $\bilfi{}{}:\Lper{2}{\xRd}\times\Lper{2}{\xRd}\rightarrow\xR$ provided by
\begin{align}\label{eq:bilinear_forms}
\bilf{\Vu}{\Vv} &:= \scal{\TA\Vu}{\Vv}_{\Lper{2}{\xRd}},
&
\bilfi{\Vu}{\Vv} &:= \scal{\TA^{-1}\Vu}{\Vv}_{\Lper{2}{\xRd}}
\end{align}
together with energetic norms
\begin{align*}
\|\Vu\|_{\TA} &:= \sqrt{\bilf{\Vu}{\Vu}},
&
\|\Vu\|_{\TAi} &:= \sqrt{\bilfi{\Vu}{\Vu}}.
\end{align*}

\begin{definition}[Homogenized matrices]
\label{def:homog_problem}
Let the coefficient $\TA$ satisfy \eqref{eq:A}. Then the primal and dual homogenized matrices $\Aeff, \Beff \in \xR^{d\times d}$ are defined as
\begin{subequations}
\label{eq:homog_problem}
\begin{align}\label{eq:P_homog_form}
\scal{\Aeff \VE}{\VE}_{\xRd} &= \min_{\Ve\in\cE} \bilf{\VE+\Ve}{\VE+\Ve}=\bilf{\VE+\tVe\mac{\VE}}{\VE+\tVe\mac{\VE}},
\\
\label{eq:D_homog_form}
\scal{\Beff\VJ}{\VJ}_{\xRd} &= \min_{\Vj\in\cJ} \bilfi{\VJ+\Vj}{\VJ+\Vj}=\bilfi{\VJ+\tVj\mac{\VJ}}{\VJ+\tVj\mac{\VJ}}
\end{align}
\end{subequations}
for arbitrary $\VE, \VJ\in\xR^d$.
\end{definition}
\begin{remark}The minimizers $\tVe\mac{\VE}$ and $\tVj\mac{\VJ}$, thanks to the Lax-Milgram lemma, exist, are unique for any $\VE,\VJ\in\xRd$, and satisfy the optimality conditions
\begin{align*}
 \bilf{\tVe\mac{\VE}}{\Vv} &= - \bilf{\VE}{\Vv}\quad\forall\Vv\in\cE,
 &
 \bilfi{\tVj\mac{\VJ}}{\Vv} &= - \bilfi{\VJ}{\Vv}\quad\forall\Vv\in\cJ.
\end{align*}
\end{remark}
\begin{remark}
Notice that the primal formulation \eqref{eq:P_homog_form} coincides with problem \eqref{eq:micro} introduced in Section~\ref{sec:introduction}, because the subspace $\cE$ from~\eqref{eq:cE_def} admits an equivalent characterization
$\cE = \{ \nabla f:f\in H^1_{\per,\zmean}(\puc) \},$
cf.~\cite[pp.~6--7]{Jikov1994HDOIF}.
\end{remark}

\subsection{Duality}
\label{sec:duality}
In this section, the homogenized matrices and their formulations \eqref{eq:homog_problem} are connected by standard duality arguments. These ideas are summarized into a proposition that is applicable to both the continuous homogenization problem \eqref{eq:homog_problem} and also to its discrete relatives \eqref{eq:FD_GaNi} and \eqref{eq:FD_GaNi_tilde}. In \ref{sec:appendix_duality}, in order to keep the exposition self-contained, we also provide its proof.
\begin{proposition}[Transformation to dual formulation]
\label{lem:transform2dual}
Let $\cH$ be a Hilbert space with a nontrivial orthogonal
decomposition $\cH = \mathring{\cU} \oplus \mathring{\cE} \oplus \mathring{\cJ}$, where $\mathring{\cU}$ is isometrically isomorphic to $\xRd$. Next, let bilinear forms $\bilfG{}{}:\cH\times\cH\rightarrow\xR$ and $\bilfGi{}{}:\cH\times\cH\rightarrow\xR$ be defined as
\begin{align*}
\bilfG{\Vu}{\Vv} &=
\scal{\rTA\Vu}{\Vv}_{\cH},
&
\bilfGi{\Vu}{\Vv} &= \scal{\rTA^{-1}\Vu}{\Vv}_{\cH}
\end{align*}
for symmetric, coercive, and bounded linear operator $\rTA:\cH\rightarrow\cH$, so that there exist $c_{\rTA}>0$ and $C_{\rTA}>0$ such that
\begin{align*}
c_{\rTA} \|\Vu\|_{\cH} \leq \scal{\rTA\Vu}{\Vu}_{\cH} \leq C_{\rTA} \|\Vu\|_{\cH}.
\end{align*}
Then matrices $\rTA_\eff,\rTB_{\eff}\in\xRdd$ defined as
\begin{subequations}
\label{eq:general_homog}
\begin{align}
\label{eq:general_homog_primal}
\scal{\rTA_\eff\VE}{\VE}_{\xRd} &= \min_{\rVe\in\mathring{\cE}} \bilfG{\VE+\rVe}{\VE+\rVe} = \bilfG{\VE+\rVe\mac{\VE}}{\VE+\rVe\mac{\VE}}
\\
\label{eq:general_homog_dual}
\scal{\rTB_{\eff}\VJ}{\VJ}_{\xRd} &= \min_{\rVj\in\mathring{\cJ}} \bilfGi{\VJ+\rVj}{\VJ+\rVj} = \bilfGi{\VJ+\rVj\mac{\VJ}}{\VJ+\rVj\mac{\VJ}}
\end{align}
\end{subequations}
for arbitrary $\VE,\VJ\in\xRd$ satisfy
\begin{align}\label{eq:duality_mutually_inverse_property}
\rTA_\eff = \rTB_{\eff}^{-1}.
\end{align}
Moreover, the minimizers $\rVe\mac{\VE}$ and $\rVj\mac{\VJ}$ of both formulations \eqref{eq:general_homog} are connected via
\begin{align}\label{eq:connection_primal_dual_fields}
\VJ+\rVj\mac{\VJ} &= \rTA[\VE+\rVe^{(\VE)}]
\quad\text{for }
\VJ = \mathring{\TA}_{\eff}\VE\text{ and }\VE\in\xRd.
\end{align}
\end{proposition}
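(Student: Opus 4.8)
The plan is to establish the three claims — the mutually-inverse property \eqref{eq:duality_mutually_inverse_property}, and the field connection \eqref{eq:connection_primal_dual_fields} — simultaneously by exhibiting an explicit minimizer for the dual problem built from the primal minimizer. First I would record the optimality (Euler--Lagrange) conditions for the two problems: $\rVe\mac{\VE}\in\mathring{\cE}$ satisfies $\bilfG{\VE+\rVe\mac{\VE}}{\Vv}=0$ for all $\Vv\in\mathring{\cE}$, and analogously $\bilfGi{\VJ+\rVj\mac{\VJ}}{\Vw}=0$ for all $\Vw\in\mathring{\cJ}$. These hold by Lax--Milgram exactly as in the Remark following Definition~\ref{def:homog_problem}, since $\rTA$ and $\rTA^{-1}$ are symmetric, coercive, and bounded on the closed subspaces $\mathring{\cE}$, $\mathring{\cJ}$.

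The key construction: fix $\VE\in\xRd$, set $\VJ:=\rTA_\eff\VE$ and define the field $\V{s}:=\rTA[\VE+\rVe\mac{\VE}]\in\cH$. Using the decomposition $\cH=\mathring{\cU}\oplus\mathring{\cE}\oplus\mathring{\cJ}$, I would decompose $\V{s}=\V{s}_{\mathring{\cU}}+\V{s}_{\mathring{\cE}}+\V{s}_{\mathring{\cJ}}$ and show (i) $\V{s}_{\mathring{\cE}}=\Vo$ and (ii) $\V{s}_{\mathring{\cU}}=\VJ$ under the identification $\mathring{\cU}\cong\xRd$. Claim (i) is precisely the primal optimality condition: for any $\Vv\in\mathring{\cE}$, $\scal{\V{s}}{\Vv}_{\cH}=\scal{\rTA[\VE+\rVe\mac{\VE}]}{\Vv}_{\cH}=\bilfG{\VE+\rVe\mac{\VE}}{\Vv}=0$, so $\V{s}\perp\mathring{\cE}$. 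For claim (ii), I would test $\V{s}$ against an arbitrary constant field $\V{F}\in\mathring{\cU}\cong\xRd$: since $\rVe\mac{\VE}\in\mathring{\cE}\perp\mathring{\cU}$ and $\V{s}_{\mathring{\cJ}}\perp\mathring{\cU}$, one gets $\scal{\V{s}_{\mathring{\cU}}}{\V{F}}_{\xRd}=\scal{\V{s}}{\V{F}}_{\cH}=\bilfG{\VE+\rVe\mac{\VE}}{\V{F}}$; but $\bilfG{\VE+\rVe\mac{\VE}}{\V{F}+\rVe\mac{\V{F}}}=\scal{\rTA_\eff\VE}{\V{F}}_{\xRd}$ by the bilinear/symmetric characterization of $\rTA_\eff$ (polarizing \eqref{eq:general_homog_primal}), and the extra term $\bilfG{\VE+\rVe\mac{\VE}}{\rVe\mac{\V{F}}}$ vanishes again by primal optimality. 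Hence $\scal{\V{s}_{\mathring{\cU}}}{\V{F}}_{\xRd}=\scal{\rTA_\eff\VE}{\V{F}}_{\xRd}=\scal{\VJ}{\V{F}}_{\xRd}$ for all $\V{F}$, so $\V{s}_{\mathring{\cU}}=\VJ$.

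Combining, $\V{s}=\VJ+\rVj$ with $\rVj:=\V{s}_{\mathring{\cJ}}\in\mathring{\cJ}$, which is the asserted formula \eqref{eq:connection_primal_dual_fields} once I verify $\rVj=\rVj\mac{\VJ}$. For that I check the dual optimality condition: for any $\Vw\in\mathring{\cJ}$, $\bilfGi{\VJ+\rVj}{\Vw}=\scal{\rTA^{-1}\V{s}}{\Vw}_{\cH}=\scal{\VE+\rVe\mac{\VE}}{\Vw}_{\cH}=0$, since $\VE\in\mathring{\cU}\perp\mathring{\cJ}$ and $\rVe\mac{\VE}\in\mathring{\cE}\perp\mathring{\cJ}$. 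Uniqueness of the dual minimizer (Lax--Milgram) then forces $\rVj=\rVj\mac{\VJ}$, proving \eqref{eq:connection_primal_dual_fields}. Finally, for \eqref{eq:duality_mutually_inverse_property} I evaluate $\scal{\rTB_\eff\VJ}{\VJ}_{\xRd}=\bilfGi{\VJ+\rVj\mac{\VJ}}{\VJ+\rVj\mac{\VJ}}=\scal{\rTA^{-1}\V{s}}{\V{s}}_{\cH}=\scal{\VE+\rVe\mac{\VE}}{\V{s}}_{\cH}=\scal{\VE+\rVe\mac{\VE}}{\VJ}_{\cH}$, where the last step drops the $\mathring{\cE}$- and $\mathring{\cJ}$-components of $\V{s}$ against the $\mathring{\cU}$-orthogonality of... more carefully: $\scal{\VE+\rVe\mac{\VE}}{\V{s}}_{\cH}=\scal{\VE+\rVe\mac{\VE}}{\rTA[\VE+\rVe\mac{\VE}]}_{\cH}=\bilfG{\VE+\rVe\mac{\VE}}{\VE+\rVe\mac{\VE}}=\scal{\rTA_\eff\VE}{\VE}_{\xRd}=\scal{\VJ}{\VE}_{\xRd}$. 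Since $\VJ=\rTA_\eff\VE$ ranges over all of $\xRd$ as $\VE$ does (invertibility of $\rTA_\eff$, which follows from its coercivity, itself a consequence of $c_{\rTA}>0$), the identity $\scal{\rTB_\eff\VJ}{\VJ}_{\xRd}=\scal{\VE}{\VJ}_{\xRd}=\scal{\rTA_\eff^{-1}\VJ}{\VJ}_{\xRd}$ for all $\VJ$, together with symmetry of both matrices, yields $\rTB_\eff=\rTA_\eff^{-1}$. The main obstacle I anticipate is the bookkeeping of which orthogonality (between $\mathring{\cU}$, $\mathring{\cE}$, $\mathring{\cJ}$) is invoked at each step and correctly handling the isometric identification $\mathring{\cU}\cong\xRd$ so that inner products in $\cH$ restricted to constants really do reproduce the Euclidean inner product on $\xRd$; everything else is routine bilinear-form manipulation.
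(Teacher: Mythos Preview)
Your proof is correct, but the route differs substantially from the paper's. The paper invokes the abstract perturbation duality machinery of Ekeland--Temam: it introduces a perturbed functional $\varPhi(\rVe,\Vv)=\tfrac12\bilfG{\VE+\rVe+\Vv}{\VE+\rVe+\Vv}$, computes its Fenchel conjugate $\varPhi^*$, and uses a general min--max identity (stated as a separate lemma) to pass from the primal minimization to a dual maximization over $\cH$; only after several substitutions and the evaluation of indicator functions does the dual formulation \eqref{eq:general_homog_dual} emerge, and the field connection \eqref{eq:connection_primal_dual_fields} is then verified \emph{a posteriori} from the stationarity condition. Your argument bypasses all of this by directly decomposing the ``flux'' $\V{s}=\rTA[\VE+\rVe\mac{\VE}]$ along $\mathring{\cU}\oplus\mathring{\cE}\oplus\mathring{\cJ}$, reading off $\V{s}_{\mathring{\cE}}=\Vo$ and $\V{s}_{\mathring{\cU}}=\VJ$ from the primal Euler--Lagrange condition, and then checking the dual optimality condition by hand. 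This is more elementary and fully self-contained (no external convex-analysis lemma needed), and it delivers \eqref{eq:connection_primal_dual_fields} and \eqref{eq:duality_mutually_inverse_property} in one stroke rather than sequentially. The paper's approach, on the other hand, makes the underlying convex-duality structure explicit and would generalize more readily to non-quadratic energies; for the quadratic setting here, your direct argument is the cleaner one.
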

\begin{remark}
The decomposition $\cH = \mathring{\cU} \oplus \mathring{\cE} \oplus \mathring{\cJ}$ fits either to the standard Helmholtz framework \eqref{eq:Helmholtz_L2} or to its fully discrete variants \eqref{eq:FD_Helmholtz_decomposition} and \eqref{eq:FD_Helmholtz_decomposition_E}.
Note that, to be defined properly, the bilinear forms \eqref{eq:general_homog} for $\VE,\VJ\in\xRd$ are understood as
\begin{align*}
 \bilfG{\VE+\rVe}{\VE+\rVe} &:= \bilfG{\mathcal{I}^{-1}[\VE]+\rVe}{\mathcal{I}^{-1}[\VE]+\rVe},
&
\bilfGi{\VJ+\rVj}{\VJ+\rVj} &:= \bilfGi{\mathcal{I}^{-1}[\VJ]+\rVj}{\mathcal{I}^{-1}[\VJ]+\rVj},
\end{align*}
with the help of the isometric isomorphism $\mathcal{I}:\rcU\rightarrow\xRd$, which is natural for spaces $\xRd$ and $\cU$, see also Remark~\ref{rem:identify_xRd_cUN} later in this paper.
\end{remark}
Properties of primal and dual homogenization problems \eqref{eq:homog_problem} now follow as a corollary to Proposition~\ref{lem:transform2dual}.
\begin{corollary}
\label{eq:continuous_transform2dual}
The homogenized matrices in \eqref{eq:P_homog_form} and \eqref{eq:D_homog_form} are mutually inverse
\begin{align*}
\Aeff=\Beff^{-1}.
\end{align*}
Moreover, the minimizers are connected by
\begin{align}\label{eq:continuous_conn_prim2dual}
 \VJ+\tVj\mac{\VJ} = \TA(\VE+\tVe\mac{\VE})\quad\text{for }\VJ=\Aeff\VE\text{ and }\VE\in\xRd.
\end{align}
\end{corollary}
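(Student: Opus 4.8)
The plan is to obtain Corollary~\ref{eq:continuous_transform2dual} as a direct specialization of Proposition~\ref{lem:transform2dual}, since the continuous homogenization problem in Definition~\ref{def:homog_problem} is precisely an instance of the abstract setting used there. Concretely, I would set $\cH = \Lper{2}{\xRd}$ with the standard inner product, and take the orthogonal decomposition $\rcU\oplus\rcE\oplus\rcJ$ to be the Helmholtz decomposition \eqref{eq:Helmholtz_L2}, i.e.\ $\rcU = \cU$, $\rcE = \cE$, $\rcJ = \cJ$. This decomposition is nontrivial, it is orthogonal by Lemma~\ref{lem:Helmholtz_decomp}, and $\cU$ is isometrically isomorphic to $\xRd$ via identification of constant functions with their (constant) values, as noted after Definition~\ref{def:projection}.

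The second step is to identify the abstract operator $\rTA$ with the multiplication operator $\Vv\mapsto\TA\Vv$ on $\Lper{2}{\xRd}$. Then $\bilfG{\Vu}{\Vv} = \scal{\TA\Vu}{\Vv}_{\Lper{2}{\xRd}} = \bilf{\Vu}{\Vv}$ and $\bilfGi{\Vu}{\Vv} = \scal{\TAi\Vu}{\Vv}_{\Lper{2}{\xRd}} = \bilfi{\Vu}{\Vv}$, matching \eqref{eq:bilinear_forms}. Symmetry of $\rTA$ follows from pointwise symmetry of $\TA(\Vx)$, and coercivity/boundedness with constants $c_{\rTA}=\cA$ and $C_{\rTA}=\CA$ follows directly from the ellipticity bounds \eqref{eq:A} integrated over $\puc$. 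With these identifications, the abstract primal/dual problems \eqref{eq:general_homog_primal}--\eqref{eq:general_homog_dual} become exactly \eqref{eq:P_homog_form}--\eqref{eq:D_homog_form}, so $\rTA_\eff = \Aeff$ and $\rTB_\eff = \Beff$.

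Given this, \eqref{eq:duality_mutually_inverse_property} of Proposition~\ref{lem:transform2dual} yields $\Aeff = \Beff^{-1}$, and \eqref{eq:connection_primal_dual_fields} yields the field relation $\VJ+\tVj\mac{\VJ} = \TA[\VE+\tVe\mac{\VE}]$ whenever $\VJ = \Aeff\VE$, which is exactly \eqref{eq:continuous_conn_prim2dual}. The only minor bookkeeping point is the interpretation of $\VE\in\xRd$ as the constant function $\mathcal{I}^{-1}[\VE]\in\cU$ inside the bilinear forms, which is precisely the convention fixed in the remark following Proposition~\ref{lem:transform2dual}; since $\mean{\VE+\rVe}=\VE$ and $\curl(\VE+\rVe)=\Vo$ for $\rVe\in\cE$, the equivalence between \eqref{eq:P_homog_form} and the abstract primal problem is transparent, and likewise for the dual side with $\div$ replacing $\curl$.

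I do not expect a genuine obstacle here: the corollary is essentially a translation exercise, and the substance was already discharged in Proposition~\ref{lem:transform2dual} (whose proof is deferred to \ref{sec:appendix_duality}). The one place requiring a sentence of care is checking that the subspaces $\cU$, $\cE$, $\cJ$ really are the ``$\rcU$, $\rcE$, $\rcJ$'' of the abstract statement, namely that $\cE$ and $\cJ$ are $\rTA$-related in the way the proposition's proof uses (orthogonality of $\cU\oplus\cE$ to $\cJ$, etc.); but this is exactly the content of Lemma~\ref{lem:Helmholtz_decomp}, so invoking it suffices.
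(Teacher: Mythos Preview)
Your proposal is correct and matches the paper's proof essentially line for line: the paper also obtains the corollary as a direct consequence of Proposition~\ref{lem:transform2dual} by setting $\cH=\Lper{2}{\xRd}$, $\rcU=\cU$, $\rcE=\cE$, $\rcJ=\cJ$, $\bilfG{}{}=\bilf{}{}$, $\bilfGi{}{}=\bilfi{}{}$, and identifying $\rTA_\eff=\Aeff$, $\rTB_\eff=\Beff$, $\rVe\mac{\VE}=\tVe\mac{\VE}$, $\rVj\mac{\VJ}=\tVj\mac{\VJ}$. Your additional verification that the multiplication operator by $\TA$ is symmetric, bounded, and coercive with constants $\cA,\CA$ is a welcome explicit check that the paper leaves implicit.
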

\begin{proof}
The proof is a direct consequence of Proposition~\ref{lem:transform2dual} for
\begin{align*}
\begin{array}{ccccccc}
\cH & = &\rcU & \oplus & \rcE & \oplus & \rcJ \\
\rotatebox{90}{=} &  & \rotatebox{90}{=} &  & \rotatebox{90}{=} &  & \rotatebox{90}{=} \\
\Lper{2}{\xRd} & = &\cU & \oplus & \cE & \oplus & \cJ \\
\end{array}
\end{align*}
and
\begin{align*}
\bilfG{}{} &= \bilf{}{},
&
\bilfGi{}{} &= \bilfi{}{},
&
\mathring{\TA}_\eff &= \Aeff,
&
\mathring{\TB}_\eff &= \Beff,
&
\rVe\mac{\VE} &= \tVe\mac{\VE},
&
\rVj\mac{\VJ} &= \tVj\mac{\VJ}.
\end{align*}
\end{proof}
\subsection{Comments on the homogenized properties and their calculation}
\label{sec:comments_on_homogenization}
\begin{remark}\label{rem:Aeff_spd}
The homogenized matrix $\Aeff\in\xRdd$ is symmetric positive definite and thus regular, as follows from standard arguments in homogenization theory, e.g. \cite{Bensoussan1978per_structures,Jikov1994HDOIF,Cioranescu1999Intro2Homog}. Indeed, thanks to the coercivity of coefficients \eqref{eq:A}, the quadratic form in \eqref{eq:P_homog_form} is nonnegative and equals to zero only for $\Ve$ such that $(\VE+\Ve)\equiv 0$, which is impossible because the space $\rcJ$ does not contain constant fields. This implies the positive definiteness of matrix $\Aeff$, while its symmetry is inherited from the symmetry of  coefficients \eqref{eq:A} and consequently of the bilinear form $\bilf{}{}$, cf. \eqref{eq:Aeff_coefficients}.
In addition, the homogenized matrix satisfy
Voigt \cite{voigt1910lehrbuch} and Reuss \cite{Reuss1929} bounds 
\begin{align*}
\mean{\TAi}^{-1} \preceq \Beff^{-1} = \Aeff  &\preceq \mean{\TA}
\end{align*}
obtained from the equivalence \eqref{eq:spd_inverse_ineq} and the formulations in \eqref{eq:homog_problem} tested with $\Ve = \Vj = \Vo$. The lower bound also provides another proof of the positive definiteness of homogenized matrix $\Aeff$.
\end{remark}
Some additional notation is needed to analyze the homogenization problem \eqref{eq:homog_problem} in more detail. By linearity, the solutions to~\eqref{eq:homog_problem} can be fully characterized by solutions to $\dime$ auxiliary problems, obtained by successively setting $\VE$ and $\VJ$ equal to the basis vectors of $\xRd$.
\begin{definition}[Auxiliary problems]
\label{def:auxiliary_problems}
The auxiliary minimizers $\tVe^{(\alp)}\in\cE$ and $\tVj^{(\alp)}\in\cJ$ satisfy
\begin{subequations}
\label{eq:weak_forms}
\begin{align}
\label{eq:weak_form_primal}
\bilf{\tVe^{(\alp)}}{\Vv} &= - \bilf{\cb{\alp}}{\Vv}\quad\forall\Vv\in \cE,
\\
\label{eq:weak_form_dual}
\bilfi{\tVj^{(\alp)}}{\Vv} &= - \bilfi{\cb{\alp}}{\Vv}\quad\forall\Vv\in\cJ
\end{align}
\end{subequations}
with $\cb{\alp}=(\del_{\alp\beta})_\beta\in\xRd$.
\end{definition}
Now, the minimizers $\tVe\mac{\VE}\in\cE$ and $\tVj\mac{\VJ}\in\cJ$ for $\VE,\VJ\in\xRd$, recall Definition~\ref{def:homog_problem}, can be obtained from the auxiliary minimizers by linear superposition
\begin{align*}
\tVe\mac{\VE} &= \sum_{\alp} E_\alp\tVe\mac{\alp},
&
\tVj\mac{\VJ} &= \sum_{\alp} J_\alp\tVj\mac{\alp},
\end{align*}
and the components of the homogenized matrix can be expressed as
\begin{align}
\label{eq:Aeff_coefficients}
A_{\eff,\alp\beta} &= \bilf{\cb{\beta}+\tVe\mac{\beta}}{\cb{\alp}+\tVe\mac{\alp}},
&
B_{\eff,\alp\beta} &= \bilfi{\cb{\beta}+\tVj\mac{\beta}}{\cb{\alp}+\tVj\mac{\alp}}.
\end{align}
Using \eqref{eq:continuous_conn_prim2dual}, the dual auxiliary minimizer $\tVj\mac{\alp}$ can be expressed as a linear combination of primal ones $\Ve^{(\alp)}$, thus
\begin{align*}
\cb{\alp}+\tVj\mac{\alp} = \TA\sum_{\alp} E_\alp(\cb{\alp}+\tVe\mac{\alp})\quad
\text{where }\VE = \Aeff^{-1} \cb{\alp}.
\end{align*}

\subsection{Upper-lower bounds on the homogenized properties}
\label{sec:upper-lower_bounds_general}
Following Dvo\v{r}\'{a}k \cite{Dvorak1993master,Dvorak1995RNM}, the aim of the present section is to obtain guaranteed bounds on the homogenized matrix $\Aeff$ by utilizing a suitable conforming approximations
\begin{align}\label{eq:conf_minimizers}
\tVe^{(\alp)}_\dst\in\cE
\quad\text{and}\quad
\tVj^{(\alp)}_\dst\in\cJ,
\end{align}
as test fields in \eqref{eq:homog_problem}. Here, $\dst$ represents a discretization parameter related to the maximum element size for FEM or grid spacing for FFT-based methods.
\begin{definition}[Upper-lower bounds on homogenized matrix, \cite{Dvorak1993master}]\label{def:upper_lower_bounds}
Matrices $\oAeffdst,\oBeffdst\in\xRdd$ defined as
\begin{align}
\label{eq:upper_lower_bounds}
\ol{A}_{\eff,\dst,\alp\beta} &=
\bilf{\cb{\beta}+\tVe^{(\beta)}_{\dst} }{\cb{\alp}+ \tVe^{(\alp)}_{\dst} },
&
\ol{B}_{\eff,\dst,\alp\beta} &=
\bilfi{\cb{\beta}+\tVj^{(\beta)}_{\dst}}{\cb{\alp}+\tVj^{(\alp)}_{\dst} }
\end{align}
are guaranteed upper-lower bounds on the homogenized matrix $\Aeff$. The mean of guaranteed bounds with a guaranteed error stands for 
\begin{align}\label{eq:approx_homog_with_error}
 \oul{\TA}_{\eff,\dst} &= \frac{1}{2}(\oAeffdst + \oBeffdst^{-1}),
 &
 \mD_{\dst} &= \frac{1}{2}(\oAeffdst - \oBeffdst^{-1}).
\end{align}
\end{definition}
The correctness of this definition is demonstrated with the following lemma.
\begin{lemma}\label{lem:bounds}
The matrices from Definition \ref{def:upper_lower_bounds} are symmetric positive definite and satisfy the upper-lower bounds structure
\begin{align}\label{eq:Aeff_ineq}
\Aeff &\preceq \oAeffdst,
&
\Beff &\preceq \oBeffdst,
&
\oBeffdst^{-1} &\preceq \Beff^{-1} = \Aeff \preceq \oAeffdst.
\end{align}
Moreover, the previous bounds imply the element-wise bounds for diagonal components
\begin{align}\label{eq:estimate_diagonal}
\bigl(\ol{B}_{\eff,\dst}^{-1}\bigr)_{\alp\alp} &\leq \bigl(B_{\eff}^{-1}\bigr)_{\alp\alp} = \bigl(A_{\eff}\bigr)_{\alp\alp} \leq \bigl(\ol{A}_{\eff,\dst}\bigr)_{\alp\alp},
\end{align}
and for non-diagonal components, i.e. for $\alp\neq\beta$
\begin{align}\label{eq:estimate_nondiagonal}
\oul{A}_{\eff,\dst,\alp\beta} - D_{\dst,\alp\alp} - D_{\dst,\beta\beta} \leq A_{\eff,\alp\beta} \leq 
\oul{A}_{\eff,\dst,\alp\beta} + D_{\dst,\alp\alp} + D_{\dst,\beta\beta}.
\end{align}
\end{lemma}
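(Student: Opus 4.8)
The plan is to establish each assertion of Lemma~\ref{lem:bounds} in turn, using the variational characterizations from Definition~\ref{def:homog_problem}, the Löwner-order tools \eqref{eq:spd_inverse_ineq}, and Corollary~\ref{eq:continuous_transform2dual}.

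\textbf{Step 1: symmetry and positive definiteness of $\oAeffdst$ and $\oBeffdst$.}
First I would fix an arbitrary $\VE\in\xRd$ and set $\Vw\mac{\VE}_\dst=\sum_\alp E_\alp\tVe^{(\alp)}_\dst\in\cE$ (using conformity \eqref{eq:conf_minimizers} and linearity of $\cE$). Expanding the bilinear form by bilinearity gives $\scal{\oAeffdst\VE}{\VE}_{\xRd}=\bilf{\VE+\Vw\mac{\VE}_\dst}{\VE+\Vw\mac{\VE}_\dst}=\|\VE+\Vw\mac{\VE}_\dst\|_\TA^2\geq 0$, with equality forcing $\VE+\Vw\mac{\VE}_\dst\equiv\Vo$ by coercivity \eqref{eq:A}; since $\cE$ contains no nonzero constant fields (its elements have zero mean), this is impossible unless $\VE=\Vo$. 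Hence $\oAeffdst$ is positive definite; symmetry of $\oAeffdst$ follows from symmetry of $\bilf{}{}$, i.e. $\ol{A}_{\eff,\dst,\alp\beta}=\ol{A}_{\eff,\dst,\beta\alp}$ directly from \eqref{eq:upper_lower_bounds}. The argument for $\oBeffdst$ is identical with $\bilfi{}{}$, $\cJ$ in place of $\bilf{}{}$, $\cE$. In particular $\oBeffdst^{-1}$ is well defined and symmetric positive definite.

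\textbf{Step 2: the upper-lower bound inequalities \eqref{eq:Aeff_ineq}.}
For the first inequality, observe that for each $\VE\in\xRd$ the field $\Vw\mac{\VE}_\dst\in\cE$ is merely one competitor in the minimization \eqref{eq:P_homog_form}, so
$\scal{\Aeff\VE}{\VE}_{\xRd}=\min_{\Ve\in\cE}\bilf{\VE+\Ve}{\VE+\Ve}\leq\bilf{\VE+\Vw\mac{\VE}_\dst}{\VE+\Vw\mac{\VE}_\dst}=\scal{\oAeffdst\VE}{\VE}_{\xRd}$,
which is exactly $\Aeff\preceq\oAeffdst$. The same reasoning applied to \eqref{eq:D_homog_form} with $\tVj^{(\alp)}_\dst\in\cJ$ yields $\Beff\preceq\oBeffdst$. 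Then by Corollary~\ref{eq:continuous_transform2dual} we have $\Beff^{-1}=\Aeff$, and applying the inverse-order equivalence \eqref{eq:spd_inverse_ineq} to $\Beff\preceq\oBeffdst$ gives $\oBeffdst^{-1}\preceq\Beff^{-1}=\Aeff$. Chaining this with $\Aeff\preceq\oAeffdst$ produces the full sandwich $\oBeffdst^{-1}\preceq\Beff^{-1}=\Aeff\preceq\oAeffdst$, which is \eqref{eq:Aeff_ineq}.

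\textbf{Step 3: the element-wise estimates \eqref{eq:estimate_diagonal} and \eqref{eq:estimate_nondiagonal}.}
The diagonal bound \eqref{eq:estimate_diagonal} is obtained by testing the Löwner inequalities of \eqref{eq:Aeff_ineq} with the canonical basis vector $\cb{\alp}$: for any symmetric matrices $\T{L}\preceq\T{M}$ one has $L_{\alp\alp}=\scal{\T{L}\cb{\alp}}{\cb{\alp}}_{\xRd}\leq\scal{\T{M}\cb{\alp}}{\cb{\alp}}_{\xRd}=M_{\alp\alp}$, so $(\oBeffdst^{-1})_{\alp\alp}\leq(\Aeff)_{\alp\alp}\leq(\oAeffdst)_{\alp\alp}$. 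For the off-diagonal estimate \eqref{eq:estimate_nondiagonal}, write $\oAeffdst=\uol{\TA}_{\eff,\dst}+\mD_\dst$ and $\oBeffdst^{-1}=\uol{\TA}_{\eff,\dst}-\mD_\dst$ from \eqref{eq:approx_homog_with_error}, so that \eqref{eq:Aeff_ineq} reads $\uol{\TA}_{\eff,\dst}-\mD_\dst\preceq\Aeff\preceq\uol{\TA}_{\eff,\dst}+\mD_\dst$, i.e. $-\mD_\dst\preceq\Aeff-\uol{\TA}_{\eff,\dst}\preceq\mD_\dst$. Testing this with $\cb{\alp}\pm\cb{\beta}$ for $\alp\neq\beta$ and subtracting the resulting diagonal inequalities (a standard polarization trick: $2(\Aeff-\uol{\TA}_{\eff,\dst})_{\alp\beta}=\scal{(\Aeff-\uol{\TA}_{\eff,\dst})(\cb{\alp}+\cb{\beta})}{\cb{\alp}+\cb{\beta}}_{\xRd}-(\Aeff-\uol{\TA}_{\eff,\dst})_{\alp\alp}-(\Aeff-\uol{\TA}_{\eff,\dst})_{\beta\beta}$) bounds $|(\Aeff-\uol{\TA}_{\eff,\dst})_{\alp\beta}|$ by $D_{\dst,\alp\alp}+D_{\dst,\beta\beta}$, which is precisely \eqref{eq:estimate_nondiagonal}.

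The only mildly delicate point is the polarization step yielding \eqref{eq:estimate_nondiagonal}: one must be careful that $\mD_\dst$ need not itself be positive semidefinite as a matrix, so the bound on the off-diagonal entry is extracted purely from the scalar inequalities obtained by testing $-\mD_\dst\preceq\Aeff-\uol{\TA}_{\eff,\dst}\preceq\mD_\dst$ against $\cb{\alp}+\cb{\beta}$ and against the individual basis vectors, rather than from any matrix-norm argument; everything else is a routine bookkeeping of the variational inequalities together with \eqref{eq:spd_inverse_ineq} and Corollary~\ref{eq:continuous_transform2dual}.
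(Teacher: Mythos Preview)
Your proposal is correct and follows essentially the same route as the paper: the L\"owner inequalities come from testing the variational characterizations with the conforming fields, the inverse is handled via \eqref{eq:spd_inverse_ineq} and Corollary~\ref{eq:continuous_transform2dual}, the diagonal entries are obtained by testing with $\cb{\alp}$, and the off-diagonal ones by the polarization identity with $\cb{\alp}\pm\cb{\beta}$.

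Two minor remarks. First, you prove positive definiteness of $\oAeffdst$ and $\oBeffdst$ directly from coercivity and the zero-mean constraint in $\cE$, $\cJ$; the paper instead deduces it a posteriori from $\Aeff\preceq\oAeffdst$ and $\Aeff\in\xRddspd$. Both are fine, and your order of presentation is arguably cleaner. Second, your closing caveat that ``$\mD_\dst$ need not itself be positive semidefinite'' is in fact incorrect: the sandwich $\oBeffdst^{-1}\preceq\Aeff\preceq\oAeffdst$ you just established gives $\oBeffdst^{-1}\preceq\oAeffdst$ by transitivity, hence $\mD_\dst=\tfrac{1}{2}(\oAeffdst-\oBeffdst^{-1})\succeq 0$. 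This does not affect your argument, since you never rely on $\mD_\dst$ failing to be positive semidefinite; but the remark should be dropped.
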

\begin{proof}
The first two inequalities in \eqref{eq:Aeff_ineq} are the consequence of minimality properties of primal and dual homogenized matrices $\Aeff$ and $\Beff$ according to Definition~\ref{def:homog_problem}, tested with conforming approximations \eqref{eq:conf_minimizers}, i.e. $\Ve=\Ve_h\mac{\VE}\in\cE$ and $\Vj=\Vj_h\mac{\VJ}\in\cJ$.
The last inequality in \eqref{eq:Aeff_ineq} is a consequence of property \eqref{eq:spd_inverse_ineq}.

The symmetry of the upper-lower bounds $\TA_{\eff,\dst},\TB_{\eff,\dst}$ follows from the symmetry of bilinear forms in \eqref{eq:upper_lower_bounds}, and the positive definiteness is shown by \eqref{eq:Aeff_ineq} once recalling that $\Aeff\in\xRddspd$.

The estimate of the diagonal terms \eqref{eq:estimate_diagonal} results from the inequality \eqref{eq:Aeff_ineq} tested with $\cb{\alp}$. 
For the non-diagonal terms, we have
\begin{align*}
2A_{\eff,\alp\beta} &= \scal{\Aeff(\cb{\alp}+\cb{\beta})}{\cb{\alp}+\cb{\beta}}_{\xRd}  - A_{\eff,\alp\alp} - A_{\eff,\beta\beta}.
\end{align*}
The first inequality in \eqref{eq:Aeff_ineq} tested with $\cb{\alp}+\cb{\beta}$ provides
\begin{align*}
\scal{\Aeff(\cb{\alp}+\cb{\beta})}{\cb{\alp}+\cb{\beta}}_{\xRd} 
\leq \scal{\oAeffdst(\cb{\alp}+\cb{\beta})}{(\cb{\alp}+\cb{\beta})}_{\xRd},
\end{align*}
Utilizing the inequalities for diagonal components \eqref{eq:estimate_diagonal}, we obtain the upper estimate in
\eqref{eq:estimate_nondiagonal}. The lower bound follows by analogous arguments.
\end{proof}
Now, we establish the relations among auxiliary minimizers \eqref{eq:conf_minimizers}, homogenized matrices \eqref{eq:upper_lower_bounds}, and guaranteed  error \eqref{eq:approx_homog_with_error}.
\begin{lemma}[Estimates]
\label{lem:estimates}
The following relations hold
\begin{align}
\label{eq:energetic_estimates}
\|\tVe^{(\alp)}-\tVe^{(\alp)}_{\dst}\|^2_{\TA} &= \ol{A}_{\eff,\dst,\alp\alp} - A_{\eff,\alp\alp},
&
\|\tVj^{(\alp)}-\tVj^{(\alp)}_{\dst}\|^2_{\TA^{-1}} &= \ol{B}_{\eff,\dst,\alp\alp} - B_{\eff,\alp\alp}
\end{align}
and
\begin{align}
 2\tr \mD_{\dst} &\leq \sum_\alp \|\tVe^{(\alp)}-\tVe^{(\alp)}_{\dst}\|^2_{\TA} + (\tr \Aeff)^2 \|\tVj^{(\alp)}-\tVj^{(\alp)}_{\dst}\|^2_{\TA^{-1}}
\label{eq:estimate_guaranteed_error}
 \\
  & \leq \|\TA\|_{\Lper{\infty}{\xRdd}} \sum_{\alp}   \|\tVe^{(\alp)}-\tVe^{(\alp)}_{\dst}\|^2_{\Ltp} 
+ (\tr \Aeff)^2
\|\TA^{-1}\|_{\Lper{\infty}{\xRdd}}  \sum_{\alp} \|\tVj^{(\alp)}-\tVj^{(\alp)}_{\dst}\|^2_{\Ltp}.
\nonumber
\end{align}
\end{lemma}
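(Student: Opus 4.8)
The plan is to obtain \eqref{eq:energetic_estimates} from a Galerkin (Pythagorean) orthogonality, and then to deduce \eqref{eq:estimate_guaranteed_error} by splitting $2\tr\mD_\dst$ into two nonnegative contributions --- one matched directly to \eqref{eq:energetic_estimates}, the other handled through a perturbation estimate for matrix inverses --- closing with an elementary pointwise $L^\infty$ bound.

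First I would prove \eqref{eq:energetic_estimates}. By \eqref{eq:Aeff_coefficients} and \eqref{eq:upper_lower_bounds} we have $A_{\eff,\alp\alp}=\|\cb{\alp}+\tVe^{(\alp)}\|^2_\TA$ and $\ol{A}_{\eff,\dst,\alp\alp}=\|\cb{\alp}+\tVe^{(\alp)}_\dst\|^2_\TA$; writing $\cb{\alp}+\tVe^{(\alp)}_\dst = (\cb{\alp}+\tVe^{(\alp)}) + (\tVe^{(\alp)}_\dst-\tVe^{(\alp)})$ and expanding the energetic norm yields
\begin{align*}
\ol{A}_{\eff,\dst,\alp\alp} - A_{\eff,\alp\alp} = \|\tVe^{(\alp)}_\dst-\tVe^{(\alp)}\|^2_\TA + 2\,\bilf{\cb{\alp}+\tVe^{(\alp)}}{\tVe^{(\alp)}_\dst-\tVe^{(\alp)}}.
\end{align*}
Since $\tVe^{(\alp)}$ and $\tVe^{(\alp)}_\dst$ both lie in $\cE$, so does their difference, and the optimality condition \eqref{eq:weak_form_primal} --- equivalently $\bilf{\cb{\alp}+\tVe^{(\alp)}}{\Vv}=0$ for all $\Vv\in\cE$ --- annihilates the cross term, which gives the primal identity; the dual one follows verbatim from \eqref{eq:weak_form_dual} and $\tVj^{(\alp)}_\dst-\tVj^{(\alp)}\in\cJ$.

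For the first inequality in \eqref{eq:estimate_guaranteed_error}, use \eqref{eq:approx_homog_with_error} and $\Aeff=\Beff^{-1}$ (Corollary~\ref{eq:continuous_transform2dual}) to write $2\tr\mD_\dst = (\tr\oAeffdst - \tr\Aeff) + (\tr\Aeff - \tr\oBeffdst^{-1})$, both parentheses nonnegative by Lemma~\ref{lem:bounds}. Summing the primal identity above over $\alp$ identifies the first parenthesis with $\sum_\alp\|\tVe^{(\alp)}-\tVe^{(\alp)}_\dst\|^2_\TA$. For the second, I would invoke the exact identity $\Beff^{-1}-\oBeffdst^{-1} = \Beff^{-1}(\oBeffdst-\Beff)\oBeffdst^{-1}$; writing $\oBeffdst-\Beff = \sum_m \mu_m\,\V{q}_m\otimes\V{q}_m$ in an orthonormal eigenbasis with $\mu_m\ge0$ (legitimate since $\Beff\preceq\oBeffdst$), the trace becomes $\sum_m \mu_m\,\scal{\Beff^{-1}\V{q}_m}{\oBeffdst^{-1}\V{q}_m}_{\xRd}$. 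By Cauchy--Schwarz each term is at most $\|\Beff^{-1}\|_{\xRdd}\,\|\oBeffdst^{-1}\|_{\xRdd}$, and since $\oBeffdst^{-1}\preceq\Beff^{-1}=\Aeff$ both operator norms do not exceed $\|\Aeff\|_{\xRdd}\le\tr\Aeff$ (the operator norm of an spd matrix is dominated by its trace, cf.~Section~\ref{sec:vectors_matrices}). Hence $\tr\Aeff-\tr\oBeffdst^{-1}\le (\tr\Aeff)^2\sum_m\mu_m = (\tr\Aeff)^2\,\tr(\oBeffdst-\Beff) = (\tr\Aeff)^2\sum_\alp\|\tVj^{(\alp)}-\tVj^{(\alp)}_\dst\|^2_{\TA^{-1}}$, the last step being the dual identity summed over $\alp$; adding the two parentheses gives the claim.

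The second inequality is the routine pointwise estimate $\|\Vw\|^2_\TA = \meas{\puc}^{-1}\int_\puc\scal{\TA(\Vx)\Vw(\Vx)}{\Vw(\Vx)}_{\xRd}\D\Vx \le \|\TA\|_{\Lper{\infty}{\xRdd}}\,\|\Vw\|^2_{\Ltp}$, and similarly with $\TA^{-1}$, applied to $\Vw=\tVe^{(\alp)}-\tVe^{(\alp)}_\dst$ and $\Vw=\tVj^{(\alp)}-\tVj^{(\alp)}_\dst$ and summed over $\alp$. The only genuinely delicate point is the middle estimate of $\tr(\Aeff-\oBeffdst^{-1})$: one must trade a perturbation of the inverse for a perturbation of $\oBeffdst$ itself and absorb the two resulting inverse factors into the constant $(\tr\Aeff)^2$; everything else is bookkeeping with the duality and bound structure already established in Corollary~\ref{eq:continuous_transform2dual} and Lemma~\ref{lem:bounds}.
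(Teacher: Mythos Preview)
Your proof is correct and follows essentially the same route as the paper: Galerkin orthogonality for \eqref{eq:energetic_estimates}, then the splitting $2\tr\mD_\dst=\tr(\oAeffdst-\Aeff)+\tr(\Aeff-\oBeffdst^{-1})$ together with the identity $\mD-\mC=\mD(\mC^{-1}-\mD^{-1})\mC$ to reduce the second term to $\tr(\oBeffdst-\Beff)$, and finally the H\"older/pointwise bound. The only cosmetic difference is that the paper states the trace inequality $\tr[\mD(\mC^{-1}-\mD^{-1})\mC]\le(\tr\mD)^2\tr(\mC^{-1}-\mD^{-1})$ for $\mC\preceq\mD$ as a fact, whereas you prove it via the spectral decomposition of $\oBeffdst-\Beff$ and Cauchy--Schwarz; your argument is in fact a clean justification of precisely that inequality.
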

\begin{proof}
The proof of the estimates \eqref{eq:energetic_estimates} is shown only for the primal formulation, the dual case proceeds by analogy. Denoting
$\breve{\Ve}^{(\alp)} := (\cb{\alp}+\tVe^{(\alp)})$ and $\breve{\Ve}^{(\alp)}_h := (\cb{\alp}+\tVe^{(\alp)}_h)$, we obtain
\begin{align*}
\|\Ve^{(\alp)}-\Ve^{(\alp)}_{\dst}\|^2_{\TA} &= \bilf{ \breve{\Ve}^{(\alp)}-\breve{\Ve}^{(\alp)}_{\dst} }{\breve{\Ve}^{(\alp)}-\breve{\Ve}^{(\alp)}_{\dst} }
= \bilf{\breve{\Ve}^{(\alp)} }{\breve{\Ve}^{(\alp)} } - 2\bilf{ \breve{\Ve}^{(\alp)} }{\breve{\Ve}^{(\alp)}_{\dst} } + \bilf{\breve{\Ve}^{(\alp)}_{\dst} }{\breve{\Ve}^{(\alp)}_{\dst} }
\\
&= \bilf{\breve{\Ve}^{(\alp)} }{\breve{\Ve}^{(\alp)} } - 2\bilf{ \breve{\Ve}^{(\alp)} }{\breve{\Ve}^{(\alp)} } + \bilf{\breve{\Ve}^{(\alp)}_{\dst} }{\breve{\Ve}^{(\alp)}_{\dst} }
= \ol{A}_{\eff,\dst,\alp\alp} - A_{\eff,\alp\alp},
\end{align*}
where we have incorporated the Galerkin orthogonality of auxiliary problem \eqref{eq:weak_form_primal} tested with $\tVe_h\mac{\alp}$ and $\tVe\mac{\alp}$, from which it follows
\begin{align*}
\bilf{\cb{\alp}+\tVe^{(\alp)} }{ \cb{\alp}+\tVe_h^{(\alp)} } = \bilf{ \cb{\alp}+\tVe^{(\alp)} }{\cb{\alp} } = \bilf{ \cb{\alp}+\tVe^{(\alp)}}{\cb{\alp}+\tVe^{(\alp)}}.
\end{align*}
The estimate for the guaranteed error \eqref{eq:estimate_guaranteed_error} utilizes the fact that
\begin{align*} 
0 \leq \tr(\mD-\mC) &\leq \tr [\mD(\mC^{-1}-\mD^{-1})\mC] \leq \tr \mD \tr(\mC^{-1}-\mD^{-1}) \tr\mC 
\\
&\leq(\tr \mD)^2\tr(\mC^{-1}-\mD^{-1})
\end{align*}
holding for $\mC,\mD\in\xR^{d\times d}_{\mathrm{spd}}$ such that $\mC\preceq \mD$.
This inequality and \eqref{eq:energetic_estimates} enable us to calculate
\begin{align*}
2\tr \mD_\dst &=
 \tr(\oAeffdst - \Aeff) + \tr(\Aeff -\oBeffdst^{-1})
\leq \tr(\oAeffdst - \Aeff) + (\tr\Aeff)^2\tr(\TB_{\eff} - \oBeffdst)
\\
&\leq  \sum_{\alp}   \|\tVe^{(\alp)}-\tVe^{(\alp)}_{\dst}\|^2_{\TA} 
+ (\tr \Aeff)^2
 \sum_{\alp} \|\tVj^{(\alp)}-\tVj^{(\alp)}_{\dst}\|^2_{\TA^{-1}},
\end{align*}
and the proof is completed with the H\"{o}lder inequality.
\end{proof}

\section{Trigonometric polynomials and their fully discrete counterparts}
\label{sec:trig_main}
This section provides an introduction to discretization of the homogenization problem \eqref{eq:homog_problem} using trigonometric polynomials defined on a regular grid with $\VN\in\xNd$ points, with $N_\alp$ points along each Cartesian axis.
Suitability of such approximations has been demonstrated in \cite{VoZeMa2014FFTH}, following the general framework of Saranen and Vainikko \cite{SaVa2000PIaPDE}, but only for the odd number of grid points
\begin{align}
\label{eq:N_odd}
\VN\in\xN^d \text{ and } N_\alp \text{ is odd for all }\alp.
\end{align}
This assumption is often referred to as \emph{odd grid}; \emph{non-odd} or \emph{even grids} are used accordingly.
Obviously, \eqref{eq:N_odd} is restrictive from the applications point of view, so in this section we extend our earlier results from \cite{VoZeMa2014FFTH} to the general case.
Note the difficulty in working with non-odd number of grid points was identified and partially solved in \cite[Section 2.4.2]{Moulinec1998NMC} by heuristic arguments. Here, we refine this result in a way to preserve the structure of upper-lower bounds on the homogenized matrix established in Section~\ref{sec:upper-lower_bounds_general}.

This section begins with a brief notation part in Section~\ref{sec:trig_notation} complemented with the basic properties of trigonometric polynomials in Section~\ref{sec:trig_defs}.
The fully discrete representation of trigonometric polynomials is introduced in Section~\ref{sec:FD_spaces_odd} and \ref{sec:FD_spaces_non_odd} for odd and general number of grid points, respectively.

\subsection{Notation}
\label{sec:trig_notation}
A multi-index notation is systematically employed, in which $\xX^{\VN}$ represents $\xX^{N_1 \times \cdots \times N_\dime}$ for $\VN\in\set{N}^d$.
Then the sets $\xXN$ and $\xMN$, or their complex counterparts $\xhXN$ and $\xhMN$, represent the spaces of vectors and matrices, e.g. 
$\MBv = \bigl(\M{v}_{\alp}^\Vk\bigr)_{\alp}^{\Vk\in\ZNd} \in \xXN$ and $\MB{M} = \bigl(\M{M}_{\alp\bet}^{\Vk\Vm}\bigr)_{\alp,\bet}^{\Vk,\Vm\in\ZNd} \in \xMN$
with an index set $\ZNd\subset \set{Z}^\dime$ introduced subsequently in~\eqref{eq:index_sets}.
The objects of these discrete spaces are indicated by bold serif font, e.g. $\MBu$ and $\MB{M}$, in order to distinguish them from scalars $u_\alp\in\xR$ for $\alp=1,\dotsc,d$, vectors $\Vu\in\xRd$, scalar-valued functions $\Vv\in\Lper{2}{}$, or vector-valued functions $\Vw\in\Lper{2}{\xRd}$.
    
Sub-vectors and sub-matrices are designated by superscripts, e.g. $\MBv^\Vk =
\bigl(\M{v}_{\alp}^\Vk\bigr)_{\alp} \in \xRd$ or $\MB{M}^{\Vk\Vm} =
\bigl(\M{M}_{\alp\bet}^{\Vk\Vm}\bigr)_{\alp,\bet} \in \xRdd$. The inner products on $\xXN$ and $\xhXN$
are defined as
\begin{align*}
\scal{\MB{u}}{\MBv}_{\xXN} 
&= 
\frac{1}{\meas{\VN}}
\sum_{\Vk\in\ZNd}
\scal{\MBu^{\Vk}}{\MBv^{\Vk}}_{\xRd},
&
\scal{\MB{u}}{\MBv}_{\xhXN} 
&= 
\sum_{\Vk\in\ZNd} 
\scal{\MBu^{\Vk}}{\MBv^{\Vk}}_{\xCd},
\end{align*}
where $|\VN |
= \prod_\alp N_\alp$ stand for the number of grid points.

Moreover, the matrix-vector or matrix-matrix multiplications follow from
\begin{align*}
(\MB{M}\MBv)^{\Vk} 
= 
\sum_{\Vm\in\ZNd}
\MB{M}^{\Vk\Vm}
\MBv^{\Vm}
\in \xRd
\quad\text{or}\quad
(\MB{M} \MB{L})^{\Vk\Vm} 
=
\sum_{\Vl\in\ZNd}
\MB{M}^{\Vk\Vl}\MB{L}^{\Vl\Vm}
\in \xRdd
\end{align*}
for $\Vk,\Vm \in \ZNd$ and $\MB{L} \in \xMN$.
The identity operator on $\xXN$ corresponds to a matrix
\begin{align*}
\MB{I} = \bigl(\del_{\alp\beta}\del_{\Vk\Vm}\bigr)_{\alp,\beta}^{\Vk,\Vm\in\ZNd}\in\xMN
\end{align*}
and a matrix $\MBA\in\xMN$ is symmetric positive definite if 
\begin{align*}
\scal{\MBA\MBu}{\MBv}_{\xXN} &= \scal{\MBu}{\MBA\MBv}_{\xXN},
&
\scal{\MBA\MBv}{\MBv}_{\xXN}>0
\end{align*}
holds for all $\MBu,\MBv\in\xXN$ such that $\MBv\neq\MB{0}$.
\subsection{Trigonometric polynomials}
\label{sec:trig_defs}
This section extends the results from \cite[Section~4.1]{VoZeMa2014FFTH} for vector-valued trigonometric polynomials defined on grids with an odd number of points \eqref{eq:N_odd} to the general case. In order to facilitate the introduction of the fully discrete spaces in Sections~\ref{sec:FD_spaces_odd} and~\ref{sec:FD_spaces_non_odd}, we also review the simplifications arising from the odd grid assumption \eqref{eq:N_odd}.

\begin{definition}[Trigonometric polynomials]\label{def:trigonometric_pol_E}
For $\VN\in\xN^d$,  approximation and interpolation spaces of $\xRd$-valued trigonometric polynomials are defined by
\begin{subequations}
\label{eq:trig_spaces}
\begin{align}
\label{eq:trig_space_Fourier}
\cTNd &= 
\Bigl\{\sum_{\Vk\in\ZNdr}{\hat{\MB{v}}^{\Vk}\varphi_{ \Vk }} : \hat{\MB{v}}^{\Vk} = \overline{(\hat{\MB{v}}^{-\Vk})} \in\xCd \Bigr\},
\\
\label{eq:trig_space_shape}
\cTNtd &= 
\Bigl\{\sum_{\Vk\in\ZNd}{\MB{v}^{\Vk}\varphi_{\VN,\Vk }} : \MB{v}^{\Vk}\in\xRd\Bigr\},
\end{align}
\end{subequations}
where a reduced and a full index sets stand for
\begin{align}
\label{eq:index_sets}
\ZNdr &= \biggl\{ \Vk \in \set{Z}^d : 
    -\frac{N_\alpha}{2} < k_\alp < \frac{N_\alpha}{2}\biggr\},
&
\ZNd  &= 
  \biggl\{ \Vk \in \set{Z}^d : 
    -\frac{N_\alpha}{2} \leq k_\alpha < \frac{N_\alpha}{2} \biggr\},
\end{align}
and the spaces $\cTNd$ and $\cTNtd$ are spanned by the Fourier and fundamental trigonometric polynomials, respectively:
\begin{subequations}
\label{eq:trig_basis}
\begin{align}
\label{eq:trig_basis_Fourier}
\varphi_{\Vk}(\Vx) &= \exp\left(2\pi\imu\sum_{\alp}\frac{k_\alp x_\alp}{Y_\alp}\right),
\\
\label{eq:trig_basis_shape}
\varphi_{\VN,\Vk}( \Vx )
&=
\frac{1}{\pVN}
\sum_{\Vm \in \ZNd}
\omega_{\VN}^{-\Vk\Vm} \varphi_{\Vm}(\Vx),
\end{align}
\end{subequations}
with the coefficients
\begin{align*}
\omega_{\VN}^{\Vk\Vm} =
\exp   \left(2 \pi \imu\sum_{\alp} \frac{k_\alp m_\alp}{N_\alp}  \right)\quad \text{for }\Vk,\Vm\in\xZ^d. 
\end{align*}
\end{definition}

\rev{}{To facilitate the following discussion, in the next lemma we recall some useful connection among both types of trigonometric basis polynomials and the Discrete Fourier Transform.}

\rev{}{\begin{lemma}
For $\Vk,\Vm\in\ZNd$, it holds
\begin{subequations}
\label{eq:properties_phi_E}
\begin{align}
\label{eq:DFT_orthogonality}
\sum_{\Vn\in\ZNd}\omega_{\VN}^{-\Vk\Vn}\omega_{\VN}^{\Vn\Vm} &= \meas{\VN}\del_{\Vk\Vm},
\\
\varphi_{\Vk}(\Vx_\VN^{\Vm}) &= \omega_\VN^{\Vk\Vm},
\label{eq:prop_phi_1_E}
\\
\varphi_{\VN,\Vk}(\Vx_\VN^{\Vm}) &= \del_{\Vk\Vm},
\label{eq:prop_phi_2_E}
\\
\scal{\varphi_{\VN,\Vk}}{\varphi_{\VN,\Vm}}_{\Lper{2}{\rev{}{\xCd}}} &= \frac{\del_{\Vk\Vm}}{\pVN}.
\label{eq:prop_phi_3_E}
\end{align}
\end{subequations}
\end{lemma}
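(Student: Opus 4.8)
The plan is to verify the four identities \eqref{eq:DFT_orthogonality}--\eqref{eq:prop_phi_3_E} in turn, each following from the preceding ones.

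\medskip

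\noindent\textbf{Step 1: DFT orthogonality \eqref{eq:DFT_orthogonality}.}
First I would note that $\omega_\VN^{-\Vk\Vn}\omega_\VN^{\Vn\Vm} = \omega_\VN^{\Vn(\Vm-\Vk)} = \prod_\alp \exp\bigl(2\pi\imu\, n_\alp(m_\alp-k_\alp)/N_\alp\bigr)$, so the multi-index sum over $\Vn\in\ZNd$ factorizes into a product over $\alp$ of one-dimensional sums $\sum_{n_\alp} \exp\bigl(2\pi\imu\, n_\alp(m_\alp-k_\alp)/N_\alp\bigr)$, where $n_\alp$ ranges over $N_\alp$ consecutive integers. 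Each such sum is the standard geometric (root-of-unity) sum: it equals $N_\alp$ when $m_\alp\equiv k_\alp \pmod{N_\alp}$ and $0$ otherwise. Since $\Vk,\Vm\in\ZNd$ lie in a fundamental domain of $\xZ^d/(N_1\xZ\times\dots\times N_d\xZ)$, the congruence $m_\alp\equiv k_\alp$ forces $m_\alp=k_\alp$, so the product is $\prod_\alp N_\alp\,\del_{k_\alp m_\alp} = \meas{\VN}\,\del_{\Vk\Vm}$.

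\medskip

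\noindent\textbf{Step 2: sampling of Fourier polynomials \eqref{eq:prop_phi_1_E}.}
This is immediate from the definitions: evaluating \eqref{eq:trig_basis_Fourier} at the grid node $\Vx_\VN^\Vm$ (whose $\alp$-th coordinate is $Y_\alp m_\alp/N_\alp$, by the standard convention for the regular grid — I would state this explicitly if the node notation $\Vx_\VN^\Vm$ has not yet been fixed) gives $\varphi_\Vk(\Vx_\VN^\Vm) = \exp\bigl(2\pi\imu\sum_\alp k_\alp m_\alp/N_\alp\bigr) = \omega_\VN^{\Vk\Vm}$.

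\medskip

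\noindent\textbf{Step 3: interpolation property \eqref{eq:prop_phi_2_E}.}
Substitute \eqref{eq:prop_phi_1_E} into the definition \eqref{eq:trig_basis_shape} of the fundamental polynomial evaluated at $\Vx_\VN^\Vm$:
$\varphi_{\VN,\Vk}(\Vx_\VN^\Vm) = \frac{1}{\pVN}\sum_{\Vn\in\ZNd}\omega_\VN^{-\Vk\Vn}\varphi_\Vn(\Vx_\VN^\Vm) = \frac{1}{\pVN}\sum_{\Vn\in\ZNd}\omega_\VN^{-\Vk\Vn}\omega_\VN^{\Vn\Vm}$, which by Step 1 equals $\frac{1}{\pVN}\meas{\VN}\del_{\Vk\Vm} = \del_{\Vk\Vm}$ (using $\pVN = \meas{\VN}$).

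\medskip

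\noindent\textbf{Step 4: $L^2$ inner product \eqref{eq:prop_phi_3_E}.}
Expand both fundamental polynomials via \eqref{eq:trig_basis_shape} and use bilinearity of the $\Lper{2}{}$ inner product together with the orthonormality \eqref{eq:Fourier_basis_functions_orthogonality} of $\{\varphi_\Vk\}$. This yields
$\scal{\varphi_{\VN,\Vk}}{\varphi_{\VN,\Vm}}_{\Lper{2}{}} = \frac{1}{\pVN^2}\sum_{\Vn,\Vl\in\ZNd}\omega_\VN^{-\Vk\Vn}\,\overline{\omega_\VN^{-\Vm\Vl}}\,\del_{\Vn\Vl} = \frac{1}{\pVN^2}\sum_{\Vn\in\ZNd}\omega_\VN^{-\Vk\Vn}\omega_\VN^{\Vm\Vn}$, and since $\omega_\VN^{\Vm\Vn}=\omega_\VN^{\Vn\Vm}$ this sum is $\meas{\VN}\del_{\Vk\Vm}$ by Step 1, giving $\del_{\Vk\Vm}/\pVN$.

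\medskip

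\noindent\textbf{Main obstacle.}
None of the steps is deep; the only real care needed is bookkeeping around whether $N_\alp$ is odd or even. The argument above is written so as to work uniformly for general grids: the one-dimensional root-of-unity sum in Step 1 is insensitive to the choice of the fundamental domain $\ZNd$ (it only matters that the $n_\alp$ run over a complete residue system mod $N_\alp$), and the collapse of the congruence $m_\alp\equiv k_\alp$ to equality uses only that $\ZNd$ is itself a complete residue system. The asymmetry between odd and even grids that the paper emphasizes enters elsewhere (through the Nyquist frequency $k_\alp=-N_\alp/2$ in relating $\cTNd$ and $\cTNtd$), not in these elementary identities; I would add a remark to that effect so the reader is not misled.
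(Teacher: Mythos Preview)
Your proof is correct and follows essentially the same approach as the paper: the paper's own argument simply cites DFT orthogonality for \eqref{eq:DFT_orthogonality}, invokes direct calculation for \eqref{eq:prop_phi_1_E}, and then derives \eqref{eq:prop_phi_2_E} and \eqref{eq:prop_phi_3_E} from DFT orthogonality together with (for the last one) the $L^2$-orthonormality \eqref{eq:Fourier_basis_functions_orthogonality} of the Fourier basis --- exactly the chain of substitutions you spell out. Your additional remarks about the fundamental domain and the irrelevance of grid parity here are accurate and helpful, but do not constitute a different route.
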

\begin{proof}The identity \eqref{eq:DFT_orthogonality} represents the orthogonality of the Discrete Fourier Transform coefficients, e.g.~\cite{Burrus1985DFT/FFT} and \eqref{eq:prop_phi_1_E} follows by direct calculations. The equalities  \eqref{eq:prop_phi_2_E} and \eqref{eq:prop_phi_3_E} are based on the orthogonality of DFT \eqref{eq:DFT_orthogonality}, the latter  additionally employs the orthogonality of the Fourier trigonometric polynomials~\eqref{eq:Fourier_basis_functions_orthogonality}.
\end{proof}}
The remainder of this section is devoted to clarifying the connection between the two definitions of trigonometric polynomials~\eqref{eq:trig_spaces}, index sets~\eqref{eq:index_sets}, and basis functions~\eqref{eq:trig_basis}.

The \emph{approximation space} $\cTNd$ provides a finite-dimensional subspace to $\Lper{2}{\xRd}$ for the Galerkin method. Its conformity, i.e. $\cTNd \subset \Lper{2}{\xRd}$, is ensured once the Hermitian symmetry of the Fourier coefficients holds, compare~\eqref{eq:trig_space_Fourier} with~\eqref{eq:FT_def}. This condition is easily enforced for odd grids which are symmetric with respect to the origin, Figure~\ref{fig:grid}. For non-odd grids the highest~(Nyquist) frequencies $k_\alp = -N_\alp/2$ must be omitted, leading to the notion of the reduced index set $\ZNdr$.

\begin{figure}[htp]
\centering
\subfigure[\scriptsize Grid points]{
\includegraphics[scale=0.6]{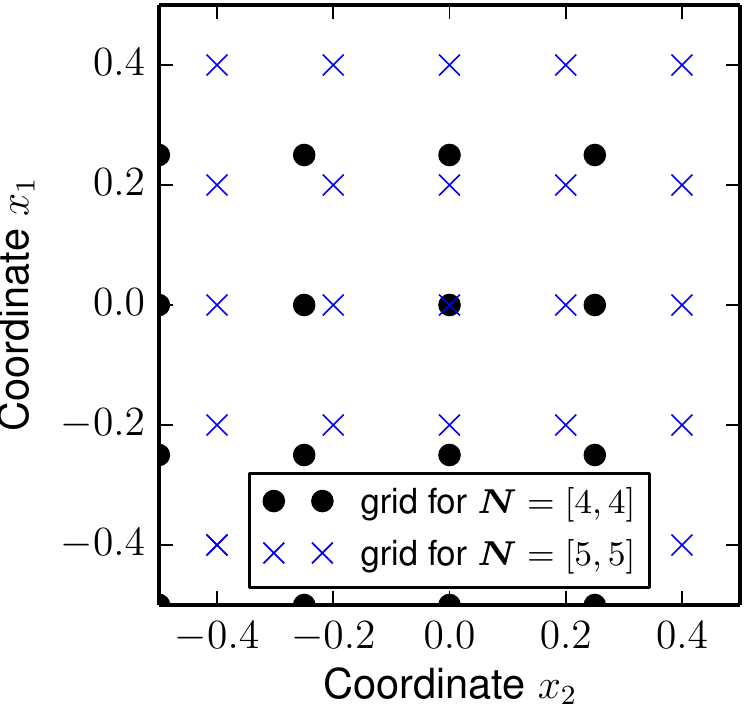}
\label{fig:grid}
}
\subfigure[\scriptsize Trigonometric polynomials]{
\includegraphics[scale=0.6]{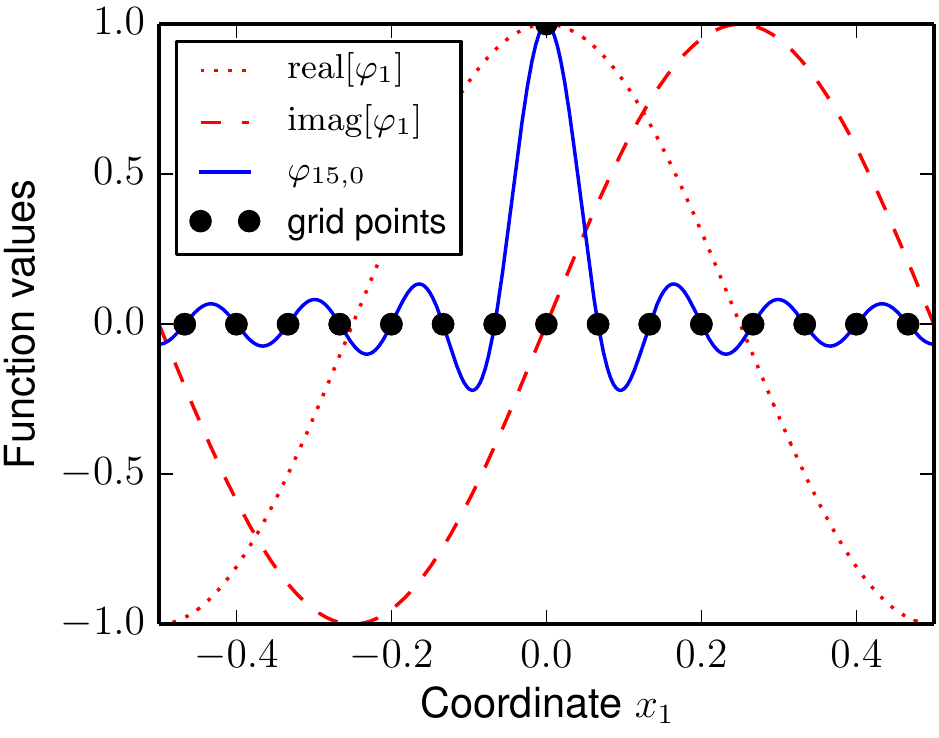}
\label{fig:trig_basis}
}
\caption{\rev{}{Examples of (a)~two-dimensional odd and even grid points \eqref{eq:grid_points} and (b)~one-dimensional complex-valued Fourier $\varphi_{1}$ and real-valued fundamental $\varphi_{15,0}$ trigonometric polynomials~\eqref{eq:trig_basis}}.}
\label{fig:trig_and_grid}
\end{figure}

The \emph{interpolation space} $\cTNtd$ will be used to perform the numerical quadrature in the Galerkin method and primarily works with data in the real instead of the Fourier domain. Its connection to the approximation space is established with the Discrete Fourier Transform~(DFT) and its inverse (iDFT)
\begin{align*}
\hat{\Vu}_{\VN}(\Vk)
=
\frac{1}{\meas{\VN}}
\sum_{\Vm\in\ZNd}\omega_\VN^{-\Vk\Vm}\Vu_{\VN}(\Vx_\VN^{\Vm}), \quad
\Vu_\VN(\Vx_\VN^{\Vk})
=
\sum_{\Vm\in\ZNd}\omega_\VN^{\Vk\Vm}\hat{\Vu}_{\VN}(\Vm)
\text{ for } \Vu_\VN\in\cTNd \text{ and }
\Vk \in \ZNd,
\end{align*}
where \rev{}{$\Vx_\VN^{\Vk}$ denotes the grid points}
\begin{align}\label{eq:grid_points}
\xNk
=
\sum_{\alp}\frac{Y_{\alp}k_{\alp}}{N_{\alp}}\cb{\alp}\quad
\text{for } 
\Vk \in \ZNd.
\end{align}
Indeed, expanding a function $\Vu_\VN:\puc\rightarrow\xCd$ into Fourier series 
\begin{align*}
\Vu_{\VN}(\Vx) & =  
\sum_{\Vk\in\ZNd}\hat{\Vu}_{\VN}(\Vk)\varphi_{\Vk}(\Vx)
= 
\frac{1}{\meas{\VN}}
\sum_{\Vk\in\ZNd} 
\sum_{\Vm\in\ZNd}
\omega_\VN^{-\Vk\Vm}
\Vu_{\VN}(\Vx_\VN^{\Vm})
\varphi_{\Vk}(\Vx)
\\
& =
\sum_{\Vm\in\ZNd}
\underbrace{\sum_{\Vk\in\ZNd} 
\frac{1}{\meas{\VN}}
\omega_\VN^{-\Vk\Vm}
\varphi_{\Vk}(\Vx)}_{\varphi_{\VN,\Vm}(\Vx)}
\Vu_{\VN}(\Vx_\VN^{\Vm})
\end{align*}
gives rise to the fundamental trigonometric polynomial $\varphi_{\VN,\Vm}$. In addition, these basis functions possess the Dirac delta property \eqref{eq:prop_phi_2_E}, Figure~\ref{fig:trig_basis}.

For further reference, these relations can be cast in the compact form  
\begin{align}\label{eq:trig_connection}
\Vu_{\VN} &=  \sum_{\Vk\in\ZNd}\hat{\MBu}_\VN^{\Vk}\varphi_{\Vk} = \sum_{\Vk\in\ZNd}{\MBu_{\VN}^{\Vk}\varphi_{\VN,\Vk}}
\quad\text{with }
\hat{\MBu}_\VN = \MBF_{\VN} \MBu_\VN\in\xCdN
\text{ and }
\MBu_\VN = \MBFi_{\VN}\hat{\MBu}_\VN\in\xCdN,
\end{align}
where $\hat{\MBu}_\VN^{\Vk} = \hat{\Vu}_\VN(\Vk)\in\xCd$  and $\MBu_\VN^{\Vk} = \Vu_\VN(\Vx_\VN^\Vk)\in\xCd$, and the matrices  
\begin{align}\label{eq:DFT}
\MBF_{\VN} &= \frac{1}{\pVN} \bigl( \del_{\alp\beta} \omega_{\VN}^{-\Vm\Vk} \bigr)_{\alp,\beta}^{\Vm,\Vk\in\ZNd} \in\xhMN,
&
\MBFi_{\VN} &= \bigl( \del_{\alp\beta} \omega_{\VN}^{\Vm\Vk} \bigr)_{\alp,\beta}^{\Vm,\Vk\in\ZNd} \in\xhMN
\end{align}
implement the vector-valued DFT and iDFT.

The relation between the two spaces of trigonometric polynomials depends on grid parity. For odd grids, $\ZNd\setminus\ZNdr = \emptyset$, and it follows from~\eqref{eq:trig_connection} that the spaces coincide: 
\begin{align*}
\cTNd = \cTNtd,
&&
\ZNdr = \ZNd
\qquad\text{ for odd grid assumption \eqref{eq:N_odd}.}
\end{align*}
This property is lost in general due to the Nyquist frequencies $\Vk\in\ZNd\setminus\ZNdr$, and only the following inclusions hold
\begin{align*}
 \cTNd \subseteq \cTNtd,
 && 
 \ZNdr \subseteq \ZNd.
\end{align*}
As a result, the interpolation space is non-conforming for non-odd grids, $\cTNtd \not\subset \Lper{2}{\xRd}$, because the fundamental trigonometric polynomials \eqref{eq:trig_basis_shape} become complex-valued off the grid points, despite being real-valued at the grid points due to the Dirac delta property. Thus, the interpolation space $\cTNtd$ admits an equivalent definition via Fourier coefficients
\begin{align*}
\cTNtd &= \biggl\{ \sum_{\Vk\in\ZNd} \hat{\MBv}_{\VN}^{\Vk} \varphi_{\Vk}:\hat{\MBv}_\VN \in \MBF_{\VN}(\xXN) \biggr\}.
\end{align*}


These arguments can be formalized by introducing suitable operators, which will be useful when dealing with the Galerkin approximations and their fully discrete versions later in Section~\ref{sec:gani}.
\begin{definition}[Operators]
\label{def:operators}
Using grid points $\Vx_\VN^\Vk$ for $\VN\in\xNd$ and $\Vk\in\ZNd$ according to \eqref{eq:grid_points}, the interpolation operator $\QN:C_{\per}(\puc;\xRd)\rightarrow \Lper{2}{\xCd}$, the truncation operator $\PN:\Lper{2}{\xRd}\rightarrow \Lper{2}{\xRd}$, and the discretization operator $\IN:\Cper{0}{\xCd}\rightarrow\xhXN$, are defined by
\begin{subequations}
\label{eq:operators}
\begin{align}
\label{eq:QN_E}
\QN[\Vu] &= \sum_{\Vk\in\ZNd} \Vu(\Vx^{\Vk}_{\VN}) \varphi_{\VN,\Vk},
\\
\label{eq:def_P_N_E}
\PN[\Vu] &= \sum_{\Vk\in\ZNdr} \widehat{\Vu}(\Vk) \varphi_{\Vk},
\\
\label{eq:def_IN_E}
\IN[\Vu] &= \bigl( u_{\alp}(\Vx_\VN^{\Vk}) \bigr)_{\alp=1,\dotsc,d}^{\Vk\in\ZNd}.
\end{align}
\end{subequations}
\end{definition}
The following lemma summarizes the relevant properties of operators~\eqref{eq:operators} and trigonometric polynomials \eqref{eq:trig_basis}. The proof generalizes the results from \cite{SaVa2000PIaPDE,Vondrejc2013PhD,VoZeMa2014FFTH} obtained under the odd grid assumption \eqref{eq:N_odd} to the general case; it is outlined here to keep the paper self-contained.
\begin{lemma}\label{lem:property_phi_IN_QN_PN_E}
\begin{enumerate}
\item 
The operator $\IN$
is an one-to-one  isometric map from $\cTNtd$ onto $\xXN$, i.e. for all $\Vu_\VN,\Vv_\VN\in\cTNtd$
\begin{align}\label{eq:from_L2_to_FD_E}
\scal{\Vu_\VN}{\Vv_\VN}_{\Lper{2}{\rev{}{\xCd}}}
=
\scal{\IN[\Vu_\VN]}{\IN[\Vv_\VN]}_{\xRdN}.
\end{align}
Moreover, for all $\Vu\in \Cper{0}{\xRd}$, we have
\begin{align}\label{eq:IN_QN_property_E}
 \IN\bigl[\QN[\Vu]\bigr] = \IN\bigl[\Vu\bigr].
\end{align}
\item The interpolation operator $\QN$
is a projection with the image $\cTNtd$,
\item The truncation operator $\PN$ is an orthogonal projection with the image $\cTNd$.
\end{enumerate}
\end{lemma}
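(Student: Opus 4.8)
The plan is to verify the three claims in sequence, relying on the connection formulae \eqref{eq:trig_connection}, the basis identities \eqref{eq:properties_phi_E}, and the definitions of the operators \eqref{eq:operators}. Throughout, the key structural fact is that any $\Vu_\VN\in\cTNtd$ is determined by its grid values $\MBu_\VN^\Vk = \Vu_\VN(\Vx_\VN^\Vk)$, and that the fundamental trigonometric polynomials $\varphi_{\VN,\Vk}$ form a basis dual to point evaluation at $\Vx_\VN^\Vm$ by the Dirac delta property \eqref{eq:prop_phi_2_E}.

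First I would treat part (i). For the isometry \eqref{eq:from_L2_to_FD_E}, I expand $\Vu_\VN = \sum_{\Vk\in\ZNd} \MBu_\VN^\Vk \varphi_{\VN,\Vk}$ and similarly for $\Vv_\VN$, then compute the $\Lper{2}{\xCd}$ inner product using the orthogonality relation \eqref{eq:prop_phi_3_E}, $\scal{\varphi_{\VN,\Vk}}{\varphi_{\VN,\Vm}}_{\Lper{2}{\xCd}} = \del_{\Vk\Vm}/\pVN$. This collapses the double sum to $\frac{1}{\pVN}\sum_{\Vk\in\ZNd} \scal{\MBu_\VN^\Vk}{\MBv_\VN^\Vk}_{\xCd}$, which is exactly $\scal{\IN[\Vu_\VN]}{\IN[\Vv_\VN]}_{\xRdN}$ by the definition of the discrete inner product and of $\IN$ in \eqref{eq:def_IN_E}; here one also uses $\pVN = \meas{\VN}$. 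Injectivity of $\IN$ on $\cTNtd$ follows since the grid values are exactly the expansion coefficients in the $\varphi_{\VN,\Vk}$ basis, so a zero image forces all $\MBu_\VN^\Vk = \Vo$; surjectivity onto $\xXN$ is immediate because any prescribed family $(\MBu^\Vk)_\Vk$ yields the polynomial $\sum_\Vk \MBu^\Vk \varphi_{\VN,\Vk}\in\cTNtd$ whose grid values it reproduces. For \eqref{eq:IN_QN_property_E}, by definition \eqref{eq:QN_E} the polynomial $\QN[\Vu]$ has grid values $\QN[\Vu](\Vx_\VN^\Vm) = \sum_\Vk \Vu(\Vx_\VN^\Vk)\varphi_{\VN,\Vk}(\Vx_\VN^\Vm) = \Vu(\Vx_\VN^\Vm)$ by \eqref{eq:prop_phi_2_E}, hence $\IN[\QN[\Vu]]$ and $\IN[\Vu]$ agree coordinatewise.

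Part (ii) is then a short consequence: $\QN[\Vu]\in\cTNtd$ by construction, so its image lies in $\cTNtd$; and for $\Vu_\VN\in\cTNtd$ the computation just performed shows $\QN[\Vu_\VN]$ has the same grid values as $\Vu_\VN$, whence $\QN[\Vu_\VN] = \Vu_\VN$ by the injectivity established in (i). Thus $\QN^2 = \QN$ with image exactly $\cTNtd$. For part (iii), $\PN$ as defined in \eqref{eq:def_P_N_E} simply retains the Fourier coefficients indexed by $\ZNdr$; applying it twice changes nothing (the retained coefficients are unaffected), giving idempotency, and its image is precisely $\cTNd$ by the Fourier-coefficient characterization \eqref{eq:trig_space_Fourier}. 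Orthogonality with respect to the $\Lper{2}{}$ inner product follows from the orthonormality of the Fourier basis \eqref{eq:Fourier_basis_functions_orthogonality}: truncation is coordinate projection onto a subset of an orthonormal basis, so $\scal{\PN[\Vu]}{\Vv}_{\Ltp} = \scal{\Vu}{\PN[\Vv]}_{\Ltp}$. The main obstacle — really the only subtle point — is bookkeeping the non-odd case carefully: the Nyquist indices $\ZNd\setminus\ZNdr$ must be excluded from $\PN$ for conformity, while $\QN$ and $\IN$ genuinely use the full index set $\ZNd$, and one must not conflate the (real-valued on the grid but complex-valued off the grid) functions $\varphi_{\VN,\Vk}$ with honest elements of $\Lper{2}{\xRd}$; working consistently in $\Lper{2}{\xCd}$ for statements involving $\IN$ and $\QN$ resolves this.
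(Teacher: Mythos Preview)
Your proof is correct and follows essentially the same approach as the paper's: both use the orthogonality relation \eqref{eq:prop_phi_3_E} of the fundamental trigonometric polynomials to establish the isometry, the Dirac delta property \eqref{eq:prop_phi_2_E} for the projection property of $\QN$ and the identity \eqref{eq:IN_QN_property_E}, and the orthonormality \eqref{eq:Fourier_basis_functions_orthogonality} of the Fourier basis for the orthogonal projection $\PN$. Your version is simply more explicit about injectivity and surjectivity of $\IN$ and about the bookkeeping for the Nyquist frequencies, which the paper leaves terse.
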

\begin{proof}
\begin{enumerate}
 \item From \eqref{eq:prop_phi_2_E} we see that
trigonometric polynomials, e.g. $\Vu_\VN,\Vv_\VN\in\cTNtd$, are uniquely defined by their grid values, so that
\begin{align*}
\scal{\Vu_\VN}{\Vv_\VN}_{\Lper{2}{\rev{}{\xCd}}}
& = 
\sum_{\Vk,\Vm\in\ZNd} 
\scal{\Vu_\VN(\xNk)}{\Vv_\VN(\xNm)}_{\xRd}
\cdot
\scal{\bfunN{\Vk}}{\bfunN{\Vm}}_{\Lper{2}{\rev{}{\xCd}}}
\\
& = 
\sum_{\Vk,\Vm\in\ZNd} 
\scal{\Vu_\VN(\xNk)}{\Vv_\VN(\xNm)}_{\xRd}
\cdot 
\frac{\del_{\Vk\Vm}}{\meas{\VN}}
=
\scal{\IN[\Vu_\VN]}{\IN[\Vv_\VN]}_{\xRdN}.
\end{align*}
 \item follows from \eqref{eq:prop_phi_2_E} and the definition of the space $\cTNtd$.
 \item follows from orthogonality of Fourier trigonometric polynomials \eqref{eq:Fourier_basis_functions_orthogonality} and the definition of the space $\cTNd$.
\end{enumerate}
\end{proof}

\subsection{Fully discrete spaces --- odd grids}
\label{sec:FD_spaces_odd}
The focus of this section is on the fully discrete spaces storing the values of the trigonometric polynomials at grids with the odd number of points \eqref{eq:N_odd}. As first recognized in \cite{VoZeMa2014FFTH}, the remarkable property of such discretizations is that the structure of the continuous problem is translated into the discrete case in a conforming way, cf.~Figure~\ref{fig:subspaces_scheme_odd}.
\begin{definition}[Fully discrete projections]
\label{def:FD_projections}Let $\mhG\sub{\bullet}(\Vk)\in\xRdd$ for $\bullet\in\{\cU,\cE,\cJ\}$ and $\Vk\in\Zd$ be the Fourier coefficients from Definition~\ref{def:projection}.
We define block diagonal matrices $\MBhG\sub{\cU}_\VN, \MBhG\sub{\cE}_\VN$, and $ \MBhG\sub{\cJ}_\VN\in\xMN$  in the Fourier domain as
\begin{align}\label{eq:FD_proj_Fourier_O}
\bigl(\MhG\sub{\bullet}_\VN\bigr)_{\alp\beta}^{\Vk\Vm} &= \hG^\bullet_{\alp\beta}(\Vk)\del_{\Vk\Vm},
\end{align}
where $\Vk,\Vm\in\ZNd$ and $\bullet\in\{\cU,\cE,\cJ\}$. The real domain equivalents are obtained by similarity transformations using DFT \eqref{eq:DFT}, i.e.~
\begin{align*}
\MBG\sub{\bullet}_\VN = \MBFi_\VN \MBhG \sub{\bullet}_\VN \MBF_\VN.
\end{align*}
\end{definition}
\begin{lemma}\label{lem:matrix_orthogonality}
Matrices $\MBG\sub{\bullet}_\VN$ for $\bullet\in\{\cU,\cE,\cJ\}$  constitute the identity
\begin{align}\label{eq:projections_identity}
  \MBG\sub{\cU}_\VN + \MBG\sub{\cE}_\VN  + \MBG\sub{\cJ}_\VN = \MB{I}
\end{align}
and are mutually orthogonal projections on $\xRdN$. 
\end{lemma}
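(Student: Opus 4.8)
The plan is to deduce the lemma from the analogous continuous statement, Lemma~\ref{lem:Helmholtz_decomp}, by transporting everything through the discrete Fourier transform. The key observation is that the matrices $\MBG\sub{\bullet}_\VN$ are, by construction, block-diagonal in the Fourier domain with blocks $\mhG\sub{\bullet}(\Vk)$ for $\Vk\in\ZNd$, so all the required algebraic identities reduce to \emph{pointwise-in-$\Vk$} identities for the $d\times d$ matrices $\mhG\sub{\bullet}(\Vk):\xCd\rightarrow\xCd$, which are already known from Lemma~\ref{lem:Helmholtz_decomp} (or directly from the explicit formulas in Definition~\ref{def:projection}).

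First I would establish \eqref{eq:projections_identity}. Working in the Fourier domain, \eqref{eq:FD_proj_Fourier_O} gives $\bigl(\MhG\sub{\cU}_\VN + \MhG\sub{\cE}_\VN + \MhG\sub{\cJ}_\VN\bigr)_{\alp\beta}^{\Vk\Vm} = \bigl(\hG^{\cU}_{\alp\beta}(\Vk) + \hG^{\cE}_{\alp\beta}(\Vk) + \hG^{\cJ}_{\alp\beta}(\Vk)\bigr)\del_{\Vk\Vm}$, and from the explicit coefficient formulas one checks $\mhG\sub{\cU}(\Vk) + \mhG\sub{\cE}(\Vk) + \mhG\sub{\cJ}(\Vk) = \mI$ for every $\Vk$ (both for $\Vk=\Vo$ and for $\Vk\in\ZdmO$); hence $\MhG\sub{\cU}_\VN + \MhG\sub{\cE}_\VN + \MhG\sub{\cJ}_\VN = \MBI$ in the Fourier domain, and applying the similarity transformation $\MBFi_\VN(\cdot)\MBF_\VN$ — which sends $\MBI$ to $\MBI$ because $\MBFi_\VN\MBF_\VN = \MBI$ by \eqref{eq:DFT_orthogonality} — yields \eqref{eq:projections_identity}. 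Next, for the projection property I would show $\bigl(\MhG\sub{\bullet}_\VN\bigr)^2 = \MhG\sub{\bullet}_\VN$ and mutual orthogonality $\MhG\sub{\bullet}_\VN\MhG\sub{\circ}_\VN = \MB{0}$ for $\bullet\neq\circ$: since the matrices are block-diagonal with matching block structure, the product is again block-diagonal with blocks $\mhG\sub{\bullet}(\Vk)\mhG\sub{\circ}(\Vk)$, and these blocks are idempotent / annihilating by Lemma~\ref{lem:Helmholtz_decomp} (the $\mhG\sub{\bullet}(\Vk)$ are mutually orthogonal projections on $\xCd$). Transporting back by the similarity transformation preserves these identities because $(\MBFi_\VN\MBhG\sub{\bullet}_\VN\MBF_\VN)(\MBFi_\VN\MBhG\sub{\circ}_\VN\MBF_\VN) = \MBFi_\VN\MBhG\sub{\bullet}_\VN\MBhG\sub{\circ}_\VN\MBF_\VN$.

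It remains to check that these projections are \emph{orthogonal} with respect to the real inner product $\scal{\cdot}{\cdot}_{\xRdN}$, i.e. self-adjoint. Here I would use that the vector-valued DFT is, up to the normalization $1/\pVN$, unitary from $(\xXN,\scal{\cdot}{\cdot}_{\xXN})$ onto $(\xhXN,\scal{\cdot}{\cdot}_{\xhXN})$ — this follows from \eqref{eq:DFT_orthogonality} and the definitions of the inner products. Since in the Fourier domain each block $\mhG\sub{\bullet}(\Vk)$ is a real symmetric (hence Hermitian) matrix — evident from the explicit formulas, as $\mI$, $\Vo\otimes\Vo$, and $\Vxi(\Vk)\otimes\Vxi(\Vk)$ are all symmetric — the operator $\MBhG\sub{\bullet}_\VN$ is self-adjoint on $(\xhXN,\scal{\cdot}{\cdot}_{\xhXN})$, and conjugating by the (unitary) DFT preserves self-adjointness, so $\MBG\sub{\bullet}_\VN$ is self-adjoint on $(\xRdN,\scal{\cdot}{\cdot}_{\xRdN})$. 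Finally I would note that $\MBG\sub{\bullet}_\VN$ indeed maps $\xRdN$ into $\xRdN$ (not merely $\xCdN$): this uses the Hermitian symmetry $\hat{\MBv}^{-\Vk} = \overline{\hat{\MBv}^{\Vk}}$ of Fourier coefficients of real data together with the symmetry $\mhG\sub{\bullet}(-\Vk) = \mhG\sub{\bullet}(\Vk)$ (the coefficients depend on $\Vxi(\Vk)\otimes\Vxi(\Vk)$, which is even in $\Vk$), a point that is slightly delicate on even grids because of the lone Nyquist index $k_\alp = -N_\alp/2$ with no partner $+N_\alp/2$ in $\ZNd$ — I expect this even-grid bookkeeping to be the only genuinely subtle point, and it is precisely why the reality of the Nyquist block $\mhG\sub{\bullet}(-N_\alp/2,\dots)$ (which is real by the explicit formula) must be invoked. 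Combining the four properties — image identification via \eqref{eq:projections_identity}, idempotence, mutual annihilation, and self-adjointness — gives the claim.
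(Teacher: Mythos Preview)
Your proof is correct and follows essentially the same approach as the paper's: both reduce everything to the block-diagonal Fourier-domain structure and inherit the projection and orthogonality properties from the continuous operators of Lemma~\ref{lem:Helmholtz_decomp} (the paper's proof is a two-sentence sketch of exactly this). One small point: your final paragraph worries about the Nyquist index on even grids, but Lemma~\ref{lem:matrix_orthogonality} is stated under the odd-grid assumption~\eqref{eq:N_odd} (Section~\ref{sec:FD_spaces_odd}), so $\ZNd=\ZNdr$ is symmetric about the origin and the reality check is immediate; the even-grid bookkeeping you anticipate is precisely what the separate Lemma~\ref{lem:matrix_orthogonality_E} handles.
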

\begin{proof}
The resolution of identity \eqref{eq:projections_identity} follows from Definitions~\ref{def:projection} and~\ref{def:FD_projections}. The projection properties with their orthogonality are inherited from the continuous projections, cf.~Lemma~\ref{lem:Helmholtz_decomp} and \cite[Section~12.1]{Milton2002TC}.
\end{proof}
\begin{definition}[Finite dimensional subspaces]\label{def:FD_subspaces}
The previously defined projections provide us with the following subspaces of $\xXN$
\begin{align*}
\xUN &= \MBG_\VN\sub{\cU} [\xXN],
&
\xEN &= \MBG_\VN\sub{\cE}[\xXN],
&
\xJN &= \MBG_\VN\sub{\cJ}[\xXN],
\end{align*}
and their trigonometric counterparts
\begin{align}\label{eq:def_cUN_cEN_cJN}
\cUN &= \INi[\xUN],
&
\cEN &= \INi[\xEN],
&
\cJN &= \INi[\xJN].
\end{align}
\end{definition}
The relation of these subspaces to the Helmholtz decomposition \eqref{eq:Helmholtz_L2} is clarified by Figure~\ref{fig:subspaces_scheme_odd} and the following lemma.
\begin{figure}[htp]
\centering
\includegraphics[scale=0.8]{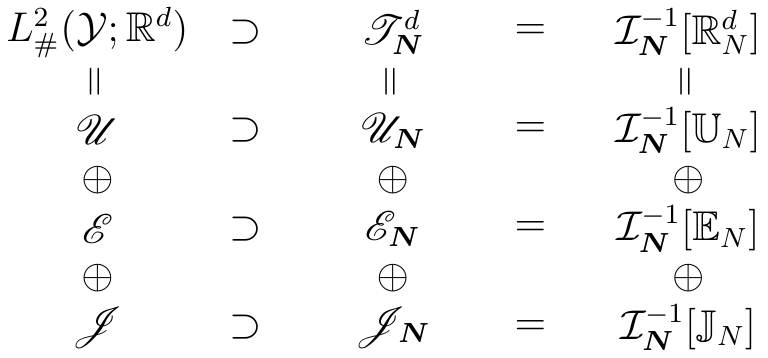}
\caption{The scheme of subspaces for odd grids}
\label{fig:subspaces_scheme_odd}
\end{figure}
\begin{lemma}\label{lem:FD_decomposition}
\begin{enumerate}
 \item Space $\xXN$ can be decomposed into three mutually orthogonal subspaces
\begin{align}
\label{eq:FD_Helmholtz_decomposition}
\xXN &= \xUN \oplus \xEN \oplus \xJN.
\end{align}
 \item 
 The scheme in Figure~\ref{fig:subspaces_scheme_odd} is valid and
\begin{align}\label{eq:connection_projections}
\mathcal{G}\sub{\bullet}[\cTNd] = \INi\bigl[\MBG\sub{\bullet}_\VN[\xRdN]\bigr]\quad\text{for }\bullet\in\{\cU,\cE,\cJ\}.
\end{align}
\end{enumerate}
\end{lemma}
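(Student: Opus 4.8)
The plan is to handle the two parts in turn, with the fully discrete projections of Definition~\ref{def:FD_projections} serving as the bridge between the continuous Helmholtz framework \eqref{eq:Helmholtz_L2} and the grid vectors in $\xXN$. For part~(i) I would read off the decomposition from Lemma~\ref{lem:matrix_orthogonality}: since $\MBG\sub{\cU}_\VN$, $\MBG\sub{\cE}_\VN$, $\MBG\sub{\cJ}_\VN$ are mutually orthogonal projections on $\xRdN$ whose sum is $\MB{I}$ by \eqref{eq:projections_identity}, every $\MBv\in\xXN$ decomposes as $\MBv=\MBG\sub{\cU}_\VN\MBv+\MBG\sub{\cE}_\VN\MBv+\MBG\sub{\cJ}_\VN\MBv$ with the three summands lying in the pairwise orthogonal subspaces $\xUN$, $\xEN$, $\xJN$, and the representation is unique by applying the projections to a putative null combination; this is exactly \eqref{eq:FD_Helmholtz_decomposition}.

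For part~(ii) the core step is the intertwining identity
\begin{align*}
\IN\bigl[\mathcal{G}\sub{\bullet}[\Vu_\VN]\bigr]=\MBG\sub{\bullet}_\VN\,\IN[\Vu_\VN]\qquad\text{for all }\Vu_\VN\in\cTNd,\ \bullet\in\{\cU,\cE,\cJ\}.
\end{align*}
To establish it I would first verify that $\mathcal{G}\sub{\bullet}$ maps $\cTNd$ into itself: by Definition~\ref{def:projection} it merely multiplies the Fourier coefficient at frequency $\Vk$ by the real, $\Vk$-even matrix $\mhG\sub{\bullet}(\Vk)$, hence it creates no frequency outside $\ZNdr$ and preserves the Hermitian symmetry demanded in \eqref{eq:trig_space_Fourier}; this is where the odd-grid hypothesis \eqref{eq:N_odd}, which makes $\ZNdr=\ZNd$ symmetric about the origin, is genuinely used. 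Then, evaluating $\mathcal{G}\sub{\bullet}[\Vu_\VN]$ at the grid points through \eqref{eq:prop_phi_1_E} and comparing frequency-by-frequency with $\MBG\sub{\bullet}_\VN=\MBFi_\VN\MBhG\sub{\bullet}_\VN\MB{F}_\VN$, the block-diagonal form \eqref{eq:FD_proj_Fourier_O}, and the relation $\MB{F}_\VN\IN[\Vu_\VN]=\hat{\MBu}_\VN$ from \eqref{eq:trig_connection} gives the identity by a routine termwise calculation.

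Granting the intertwining identity, everything else follows by transport of structure through the isometric isomorphism $\IN$, which maps $\cTNd=\cTNtd$ onto $\xRdN$ by part~(i) of Lemma~\ref{lem:property_phi_IN_QN_PN_E}. Applying $\INi$ yields $\mathcal{G}\sub{\bullet}[\cTNd]=\INi\bigl[\MBG\sub{\bullet}_\VN[\xRdN]\bigr]$, which is $\cUN$, $\cEN$, or $\cJN$ by Definition~\ref{def:FD_subspaces} together with \eqref{eq:def_cUN_cEN_cJN}; this is the claimed identity \eqref{eq:connection_projections}. Transporting the orthogonal decomposition \eqref{eq:FD_Helmholtz_decomposition} by $\INi$ gives $\cTNd=\cUN\oplus\cEN\oplus\cJN$ orthogonally, since $\IN$ carries the $\Lper{2}{\xRd}$ inner product restricted to $\cTNd$ to the $\xRdN$ inner product by \eqref{eq:from_L2_to_FD_E}; moreover, because $\cTNd$ is invariant under the mutually orthogonal projections $\mathcal{G}\sub{\cU}$, $\mathcal{G}\sub{\cE}$, $\mathcal{G}\sub{\cJ}$ (Lemma~\ref{lem:Helmholtz_decomp}), their restrictions are again mutually orthogonal projections, now onto $\cUN$, $\cEN$, $\cJN$, and since those projections have ranges $\cU$, $\cE$, $\cJ$ on $\Lper{2}{\xRd}$ we obtain the inclusions $\cUN\subseteq\cU$, $\cEN\subseteq\cE$, $\cJN\subseteq\cJ$; the inclusions together with these factorizations are precisely what the diagram in Figure~\ref{fig:subspaces_scheme_odd} records. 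I expect the one delicate point to be the invariance $\mathcal{G}\sub{\bullet}[\cTNd]\subseteq\cTNd$: it hinges on the symmetry of the index set $\ZNd$ under the odd-grid hypothesis and is exactly what breaks down for general grids, which is why Section~\ref{sec:FD_spaces_non_odd} requires a separate treatment.
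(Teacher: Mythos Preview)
Your proposal is correct and follows essentially the same route as the paper: part~(i) is read off from Lemma~\ref{lem:matrix_orthogonality}, and part~(ii) rests on the invariance $\mathcal{G}\sub{\bullet}[\cTNd]\subset\cTNd$ (valid precisely under the odd-grid assumption), the isometry of $\IN$ from Lemma~\ref{lem:property_phi_IN_QN_PN_E}, the two representations \eqref{eq:trig_connection}, and the definition \eqref{eq:FD_proj_Fourier_O} of the discrete projections via the continuous ones. Your intertwining identity $\IN\bigl[\mathcal{G}\sub{\bullet}[\Vu_\VN]\bigr]=\MBG\sub{\bullet}_\VN\,\IN[\Vu_\VN]$ is simply the explicit form of what the paper sketches in one sentence, and your identification of where the odd-grid hypothesis enters is exactly the point the paper later isolates in Section~\ref{sec:FD_spaces_non_odd}.
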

\begin{proof}
The Helmholtz-like decomposition of trigonometric polynomials, the second column in Figure~\ref{fig:subspaces_scheme_odd}, is accomplished with the same set of projections $\mathcal{G}\sub{\bullet}$ for $\bullet\in\{\cU,\cE,\cJ\}$
as they satisfy
\begin{align*}
 \mathcal{G}\sub{\bullet}[\cTNd]\subset\cTNd\quad\text{for }\bullet\in\{\cU,\cE,\cJ\}.
\end{align*}
The connection of continuous projections and fully discrete projections in \eqref{eq:connection_projections} is a consequence of isometry of the discretization operator $\IN$ proven in Lemma~\ref{lem:property_phi_IN_QN_PN_E}, two representations of trigonometric polynomials \eqref{eq:trig_connection}, and the definition of the fully discrete projections \eqref{eq:FD_proj_Fourier_O} via continuous ones. The last column in Figure~\ref{fig:subspaces_scheme_odd} is then obvious.
\end{proof}
\begin{remark}
The previous proof yields an alternative characterization of the conforming subspaces
\begin{align*}
\cUN&=\cU\bigcap\cTNd,
&
\cEN&=\cE\bigcap\cTNd,
&
\cJN&=\cJ\bigcap\cTNd.
\end{align*}
Thus, $\cUN, \cEN$, and $\cJN$ represent the subspaces of constant, curl-free, and divergence-free vector-valued polynomials, while $\xUN=\IN[\cUN]$, $\xEN=\IN[\cEN]$, and $\xJN=\IN[\cJN]$ collect their values at the grid points.
\end{remark}
\subsection{Fully discrete spaces --- general grids}
\label{sec:FD_spaces_non_odd}
The framework of fully discrete spaces, introduced in previous section for odd grid assumption \eqref{eq:N_odd}, is extended here to the general grids.
Similarly to Section~\ref{sec:trig_defs}, the special attention is given to the Nyquist frequencies $\Vk\in\ZNd\setminus\ZNdr$ in order to obtain the conforming approximation spaces.
\begin{definition}[Fully discrete projections]
\label{def:FD_proj_E}
Let $\mhG\sub{\bullet}(\Vk)\in\xRdd$ for $\bullet\in\{\cU,\cE,\cJ\}$ and $\Vk\in\Zd$ be the Fourier coefficients from Definition~\ref{def:projection}.
We define the block diagonal matrices $\MBhG\sub{\cU}_\VN, \MBhG\sub{\cE}_{\VN,\Vo},\MBhG\sub{\cE}_{\VN,\mI},\MBhG\sub{\cJ}_{\VN,\Vo}$, and $\MBhG\sub{\cJ}_{\VN,\mI}\in\xMN$ in the Fourier domain as
\begin{align*}
(\MBhG\sub{\cU}_\VN)_{\alp\beta}^{\Vk\Vm} &= \hG^{\cU}_{\alp\beta}(\Vk)\del_{\Vk\Vm},
\\
(\MBhG\sub{\bullet}_{\VN,\Vo})_{\alp\beta}^{\Vk\Vm} &= 
\begin{cases}
\hG\sub{\bullet}_{\alp\beta}(\Vk)\del_{\Vk\Vm},
\\
0,
\end{cases}
&
(\MBhG\sub{\bullet}_{\VN,\mI})_{\alp\beta}^{\Vk\Vm} & = 
\begin{cases}
\hG\sub{\bullet}_{\alp\beta}(\Vk)\del_{\Vk\Vm} & \text{for }\Vk\in\ZNdr,
\\
\del_{\alp\beta}\del_{\Vk\Vm}&\text{for }\Vk\in\ZNd\setminus\ZNdr,
\end{cases}
\end{align*}
where $\Vk,\Vm\in\ZNd$ and $\bullet\in\{\cE,\cJ\}$.
The real domain equivalents are obtained by similarity transformations using the DFT matrices~\eqref{eq:DFT}, i.e.~
\begin{align*}
\MBG\sub{\cU}_\VN &= \MBFi_\VN\MBhG\sub{\cU}_\VN\MBF_\VN,
&
\MBG_{\VN,\Vo}\sub{\bullet} &= \MBFi_\VN\MBhG\sub{\bullet}_{\VN,\Vo}\MBF_\VN,
&
\MBG_{\VN,\mI}\sub{\bullet} &= \MBFi_\VN\MBhG\sub{\bullet}_{\VN,\mI}\MBF_\VN
\end{align*}
for $\bullet\in\{\cE,\cJ\}$.
\end{definition}
\begin{lemma}\label{lem:matrix_orthogonality_E}
The two triples of matrices $\{ \MBG\sub{\cU}_\VN, \MBG\sub{\cE}_{\VN,\Vo}, \MBG\sub{\cJ}_{\VN,\mI}\}$ and $\{\MBG\sub{\cU}_\VN, \MBG\sub{\cE}_{\VN,\mI}, \MBG\sub{\cJ}_{\VN,\Vo}\}$
constitute identities
\begin{align}\label{eq:FD_proj_id_E}
\MB{I} &= \MBG\sub{\cU}_\VN + \MBG\sub{\cE}_{\VN,\Vo}  + \MBG\sub{\cJ}_{\VN,\mI},
&
\MB{I} &= \MBG\sub{\cU}_\VN + \MBG\sub{\cE}_{\VN,\mI}  + \MBG\sub{\cJ}_{\VN,\Vo},
\end{align}
and each triple consists of mutually orthogonal projections on $\xRdN$.
\end{lemma}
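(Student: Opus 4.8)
The plan is to transfer everything to the Fourier domain, where all the matrices involved are block diagonal in the frequency index, and then to reason block by block. Each real-domain matrix in the statement is obtained from its Fourier counterpart by the similarity transform $\MBFi_\VN(\cdot)\MBF_\VN$, and $\MBF_\VN\MBFi_\VN=\MBFi_\VN\MBF_\VN=\MB{I}$ by the DFT orthogonality \eqref{eq:DFT_orthogonality}; consequently sums and products of the real-domain matrices correspond, under this transform, to sums and products of the hatted ones, so in particular $\MBG\sub{A}\MBG\sub{B}=\MBFi_\VN\MBhG\sub{A}\MBhG\sub{B}\MBF_\VN$ for any two members $A,B$ of a triple, and each of the identities in \eqref{eq:FD_proj_id_E} is equivalent to the corresponding identity for the hatted matrices. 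Furthermore, the discrete Parseval identity $\scal{\MBF_\VN\MBu}{\MBF_\VN\MBv}_{\xhXN}=\scal{\MBu}{\MBv}_{\xXN}$ (again a consequence of \eqref{eq:DFT_orthogonality}) shows that $\MBF_\VN$ and $\MBFi_\VN$ are mutually adjoint isometries between $(\xXN,\scal{\cdot}{\cdot}_{\xXN})$ and $(\xhXN,\scal{\cdot}{\cdot}_{\xhXN})$, so idempotency and self-adjointness (hence the orthogonal-projection property) also pass from the hatted to the real-domain matrices. It therefore suffices to prove \eqref{eq:FD_proj_id_E} and the orthogonal-projection property for the five block-diagonal matrices $\MBhG\sub{\cU}_\VN,\MBhG\sub{\cE}_{\VN,\Vo},\MBhG\sub{\cE}_{\VN,\mI},\MBhG\sub{\cJ}_{\VN,\Vo},\MBhG\sub{\cJ}_{\VN,\mI}$.

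Because these matrices are diagonal in $\Vk$, I would check the $d\times d$ blocks separately for $\Vk\in\ZNdr$ and for $\Vk\in\ZNd\setminus\ZNdr$. For $\Vk\in\ZNdr$ the five matrices restrict to the blocks $\mhG\sub{\cU}(\Vk)$, $\mhG\sub{\cE}(\Vk)$ (both for $\MBhG\sub{\cE}_{\VN,\Vo}$ and for $\MBhG\sub{\cE}_{\VN,\mI}$), and $\mhG\sub{\cJ}(\Vk)$ (both for $\MBhG\sub{\cJ}_{\VN,\Vo}$ and for $\MBhG\sub{\cJ}_{\VN,\mI}$); by Definition~\ref{def:projection}, Lemma~\ref{lem:Helmholtz_decomp}, and \cite[Section~12.1]{Milton2002TC} these three matrices are real symmetric, mutually orthogonal projections whose sum is $\mI$, so both triples behave correctly on $\ZNdr$. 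For $\Vk\in\ZNd\setminus\ZNdr$ I would first note that such $\Vk$ has some component equal to $-N_\alp/2$, a nonzero integer, so $\Vk\neq\Vo$ and hence $\mhG\sub{\cU}(\Vk)=\Vo\otimes\Vo$ by Definition~\ref{def:projection}; by Definition~\ref{def:FD_proj_E} the blocks of $\MBhG\sub{\cE}_{\VN,\Vo}$ and $\MBhG\sub{\cJ}_{\VN,\Vo}$ vanish there while those of $\MBhG\sub{\cE}_{\VN,\mI}$ and $\MBhG\sub{\cJ}_{\VN,\mI}$ equal $\mI$. Thus on the Nyquist set the first triple restricts to the blocks $\{\Vo\otimes\Vo,\Vo\otimes\Vo,\mI\}$ and the second to $\{\Vo\otimes\Vo,\mI,\Vo\otimes\Vo\}$, which in either case are (trivially) mutually orthogonal projections adding up to $\mI$. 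Combining the two frequency regimes yields \eqref{eq:FD_proj_id_E} and the orthogonal-projection property for the hatted matrices, hence, by the first paragraph, for the real-domain triples.

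Finally, I would verify that the real-domain matrices indeed map $\xRdN$ into itself, i.e.\ have real entries: the inverse DFT of a Fourier array whose coefficients are real and even in $\Vk$ is real-valued, and here the blocks $\mhG\sub{\cU}(\Vk)$, $\mhG\sub{\cE}(\Vk)=\Vxi(\Vk)\otimes\Vxi(\Vk)/\scal{\Vxi(\Vk)}{\Vxi(\Vk)}_{\xRd}$, $\mhG\sub{\cJ}(\Vk)=\mI-\mhG\sub{\cE}(\Vk)$ are real and even, while the Nyquist modification is the real matrix $\mI$ placed over the index set $\ZNd\setminus\ZNdr$, which is invariant under $\Vk\mapsto-\Vk$ modulo $\VN$. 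I expect this real-valuedness bookkeeping at the Nyquist frequencies --- together with the elementary but crucial observation that $\mhG\sub{\cU}(\Vk)$ vanishes there, which is exactly what leaves room to assign the identity block to the $\cE$-slot or to the $\cJ$-slot and thus to produce two distinct admissible triples --- to be the only genuinely new ingredient compared with the odd-grid Lemma~\ref{lem:matrix_orthogonality}; everything else is a transcription of the continuous Helmholtz structure of Lemma~\ref{lem:Helmholtz_decomp}.
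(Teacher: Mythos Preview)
Your proof is correct and follows essentially the same approach as the paper's own proof, which is very terse: it simply states that the resolution of identity follows from Definitions~\ref{def:projection} and~\ref{def:FD_proj_E}, and that the projection and orthogonality properties are proved as in Lemma~\ref{lem:matrix_orthogonality} for $\Vk\in\ZNdr$ and follow directly from Definition~\ref{def:FD_proj_E} on the Nyquist set. You have unpacked precisely these steps --- the block-by-block argument in the Fourier domain, the observation that $\mhG\sub{\cU}(\Vk)=\Vo\otimes\Vo$ on the Nyquist set, and the additional real-valuedness check --- so your write-up is a more detailed version of the same argument rather than a different route.
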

\begin{proof}The resolution of identity \eqref{eq:FD_proj_id_E} follows from Definitions~\ref{def:projection} and~\ref{def:FD_proj_E}. The projection properties and their orthogonality are proven in the same way as in Lemma~\ref{lem:matrix_orthogonality} for $\Vk\in\ZNdr$, and are the direct consequence of Definition~\ref{def:FD_proj_E} for the Nyquist frequencies $\Vk\in\ZNd\setminus\ZNdr$.
\end{proof}
\begin{definition}[Finite dimensional subspaces] \label{def:FD_subspaces_E}
With the previously defined projections, we introduce the subspaces of $\xXN$
\begin{subequations}
\begin{align}
\label{eq:subspaces_xUN_xEN_xJN_E}
\xUN &= \MBG\sub{\cU}_\VN [\xXN],
&
\xEN &= \MBG\sub{\cE}_{\VN,\Vo}[\xXN],
&
\xJN &= \MBG\sub{\cJ}_{\VN,\Vo}[\xXN],
\\
\label{eq:subspaces_xENapp_xJNapp}
&
&
\xENapp &= \MBG\sub{\cE}_{\VN,\mI}[\xXN],
&
\xJNapp &= \MBG\sub{\cJ}_{\VN,\mI}[\xXN]
\end{align}
\end{subequations}
and their trigonometric counterparts
\begin{align*}
\cUN &= \INi[\xUN],
&
\cEN &= \INi[\xEN],
&
\cJN &= \INi[\xJN],
\\
&
&
\cENapp &= \INi[\xENapp],
&
\cJNapp &= \INi[\xJNapp].
\end{align*}
\end{definition}
Compared to the previous section, the relations among these subspaces 
are more intricate, see Figure~\ref{fig:subspaces_scheme} and the following lemma.
\begin{figure}[htp]
\centering
\includegraphics[scale=0.8]{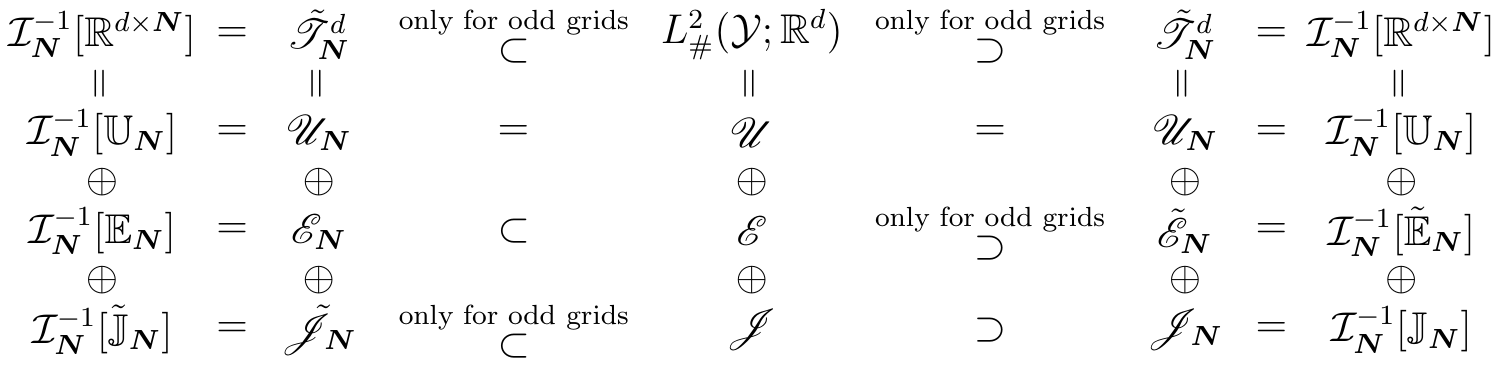}
\caption{The scheme of subspaces for general grids}
\label{fig:subspaces_scheme}
\end{figure}
\begin{lemma}\label{lem:FD_decomposition_E}
For the subspaces from  Definition \ref{def:FD_subspaces_E}, the following holds:
\begin{enumerate}
  \item Space $\xXN$ admits two alternative orthogonal decompositions
\begin{align}
\label{eq:FD_Helmholtz_decomposition_E}
\xXN &= \xUN \oplus \xEN \oplus \xJNapp,
&
\xXN &= \xUN \oplus \xENapp \oplus \xJN.
\end{align}
Moreover, the subspaces $\xENapp$ and $\xJNapp$ enlarge the original ones, i.e.
\begin{align*}
 \xEN &\subseteq \xENapp,
 &
 \xJN &\subseteq \xJNapp,
\end{align*}
and coincide only for odd grids \eqref{eq:N_odd}.
\item The scheme in Figure~\ref{fig:subspaces_scheme} is valid and
\begin{align}
\label{eq:connection_projections_E}
\mathcal{G}\sub{\cU}[\cTNd] &= \INi[\xUN],
&
\mathcal{G}\sub{\cE}[\cTNd] &= \INi[\xEN],
&
\mathcal{G}\sub{\cJ}[\cTNd] &= \INi[\xJN].
\end{align}
\end{enumerate}
\end{lemma}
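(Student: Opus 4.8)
The plan is to mirror, as closely as possible, the structure of the odd-grid proof (Lemma~\ref{lem:FD_decomposition}), splitting every argument into two regimes: the frequencies $\Vk\in\ZNdr$, where the fully discrete projections reduce exactly to those of Definition~\ref{def:FD_projections} and the odd-grid reasoning applies verbatim, and the Nyquist frequencies $\Vk\in\ZNd\setminus\ZNdr$, which require separate bookkeeping. For part~(i), the two orthogonal decompositions in \eqref{eq:FD_Helmholtz_decomposition_E} are immediate from Lemma~\ref{lem:matrix_orthogonality_E}: the two triples $\{\MBG\sub{\cU}_\VN,\MBG\sub{\cE}_{\VN,\Vo},\MBG\sub{\cJ}_{\VN,\mI}\}$ and $\{\MBG\sub{\cU}_\VN,\MBG\sub{\cE}_{\VN,\mI},\MBG\sub{\cJ}_{\VN,\Vo}\}$ were already shown there to be resolutions of the identity by mutually orthogonal projections, so each induces an orthogonal direct-sum decomposition of $\xRdN$ into the images of its three members, which are precisely $\xUN\oplus\xEN\oplus\xJNapp$ and $\xUN\oplus\xENapp\oplus\xJN$ by Definition~\ref{def:FD_subspaces_E}. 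The inclusions $\xEN\subseteq\xENapp$ and $\xJN\subseteq\xJNapp$ follow by comparing the Fourier-domain blocks of $\MBhG\sub{\bullet}_{\VN,\Vo}$ and $\MBhG\sub{\bullet}_{\VN,\mI}$: they agree on $\ZNdr$, while on the Nyquist frequencies the $\Vo$-version is the zero block and the $\mI$-version contains $\del_{\alp\beta}$; since $\MBhG\sub{\bullet}_{\VN,\Vo}$ and $\MBhG\sub{\bullet}_{\VN,\mI}$ are both orthogonal projections and the first is pointwise dominated by the second in the L\"owner order block-by-block, the image of the first is contained in the image of the second. Equality for odd grids is clear because then $\ZNd\setminus\ZNdr=\emptyset$, so $\MBhG\sub{\bullet}_{\VN,\Vo}=\MBhG\sub{\bullet}_{\VN,\mI}$.

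For part~(ii), the key identities \eqref{eq:connection_projections_E} are the analogue of \eqref{eq:connection_projections}. I would first observe that $\mathcal{G}\sub{\bullet}$ for $\bullet\in\{\cU,\cE,\cJ\}$ maps $\cTNd$ into itself: for $\Vu_\VN=\sum_{\Vk\in\ZNdr}\hat{\MBv}^\Vk\varphi_\Vk\in\cTNd$, applying $\mathcal{G}\sub{\bullet}$ only multiplies each Fourier block $\hat{\MBv}^\Vk$ by $\mhG\sub{\bullet}(\Vk)$, keeping the frequency support inside $\ZNdr$ and (by $\mhG\sub{\bullet}(\Vk)=\overline{\mhG\sub{\bullet}(-\Vk)}$, which is the real symmetry built into Definition~\ref{def:projection}) preserving the Hermitian symmetry of the coefficients; hence $\mathcal{G}\sub{\bullet}[\cTNd]\subset\cTNd\subset\cTNtd$, so $\IN$ acts on it. Then, using the isometry and bijectivity of $\IN:\cTNtd\to\xXN$ from Lemma~\ref{lem:property_phi_IN_QN_PN_E}, the two representations \eqref{eq:trig_connection} of a trigonometric polynomial, and the defining relation $\MBhG\sub{\bullet}_{\VN,\Vo}=\MBF_\VN$-conjugate-of the continuous Fourier multiplier restricted to $\ZNdr$ (zero on the Nyquist block), I would compute $\IN\big[\mathcal{G}\sub{\bullet}[\Vu_\VN]\big]$ for $\Vu_\VN\in\cTNd$ and match it with $\MBG\sub{\bullet}_{\VN,\Vo}\,\IN[\Vu_\VN]$; since $\Vu_\VN\in\cTNd$ has no Nyquist content, $\IN[\Vu_\VN]$ lies in the subspace on which $\MBhG\sub{\bullet}_{\VN,\Vo}$ and $\MBhG\sub{\bullet}_{\VN,\mI}$ coincide, so it does not matter which of $\MBG\sub{\bullet}_{\VN,\Vo}$ or $\MBG\sub{\bullet}_{\VN,\mI}$ is used, and one obtains $\mathcal{G}\sub{\bullet}[\cTNd]=\INi[\MBG\sub{\bullet}_{\VN,\Vo}[\xRdN]]=\INi[\xEN]$ (respectively $\xUN$, $\xJN$). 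The three identities \eqref{eq:connection_projections_E} then read off directly. Reading the commutative scheme of Figure~\ref{fig:subspaces_scheme} off these identities is routine: the top row is the Helmholtz splitting of $\Lper{2}{\xRd}$, the middle row is the splitting of $\cTNd$ by the restricted continuous projections, the bottom row is \eqref{eq:FD_Helmholtz_decomposition_E}, and the vertical arrows are $\IN$ restricted appropriately, with $\INi[\xENapp]\supsetneq\mathcal{G}\sub{\cE}[\cTNd]$ in general accounting for the dashed enlargement by Nyquist modes.

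The main obstacle is the careful Nyquist-frequency bookkeeping in part~(ii): one must verify that $\IN$ really does restrict to an isometric bijection between $\mathcal{G}\sub{\bullet}[\cTNd]$ and $\xEN=\MBG\sub{\cE}_{\VN,\Vo}[\xRdN]$ (and not merely into the larger $\xENapp$), and that the Fourier-domain multiplier defining $\MBhG\sub{\cE}_{\VN,\Vo}$ is the genuine push-forward of $\mathcal{G}\sub{\cE}$ under the identification \eqref{eq:trig_connection} — in particular that the conjugation by the DFT matrices $\MBF_\VN,\MBFi_\VN$ in Definition~\ref{def:FD_proj_E} is compatible with the aliasing-free frequency support of elements of $\cTNd$. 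Once it is granted that $\cTNd$ carries no Nyquist content, all the delicate points collapse, because on that subspace the two families of projections are literally equal to the odd-grid ones; the remainder is then a transcription of the proof of Lemma~\ref{lem:FD_decomposition}.
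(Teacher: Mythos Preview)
Your proposal is correct and follows essentially the same approach as the paper. Both argue part~(i) directly from Lemma~\ref{lem:matrix_orthogonality_E} and a blockwise comparison of $\MBhG\sub{\bullet}_{\VN,\Vo}$ with $\MBhG\sub{\bullet}_{\VN,\mI}$ at the Nyquist frequencies, and both reduce part~(ii) to the observation that $\cTNd$ carries no Nyquist content, so that on $\IN[\cTNd]$ the discrete projections $\MBG\sub{\bullet}_{\VN,\Vo}$ act exactly as the continuous multipliers $\mhG\sub{\bullet}$ do via the representation~\eqref{eq:trig_connection}; your write-up is simply more explicit than the paper's terse version.
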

\begin{proof}
Eq.~\eqref{eq:FD_Helmholtz_decomposition_E} is a consequence of resolutions of identity \eqref{eq:FD_proj_id_E}. The rest in (i) follows from Definition~\ref{def:FD_proj_E} of the fully discrete projections, once noticing that the pairs of matrices $\{ \MBG^{\cE}_{\VN,\Vo},\MBG^{\cE}_{\VN,\mI} \}$ and $\{ \MBG^{\cJ}_{\VN,\Vo},\MBG^{\cJ}_{\VN,\mI} \}$ coincide for odd grids \eqref{eq:N_odd} and differ only for the Nyquist frequencies $\Vk\in\ZNd\setminus\ZNdr$.

The special case of part (ii) for odd grids \eqref{eq:N_odd} has already been proven in Lemma~\ref{lem:FD_decomposition}; in such case the spaces of trigonometric polynomials $\cTNd$ and $\cTNtd$ coincide. 
Utilizing Lemma~\ref{lem:property_phi_IN_QN_PN_E} (i), we are left with
\begin{align*}
 \cU &= \cUN,
 &
 \cEN &\subsetneq \cE,
 &
 \cJN &\subsetneq \cJ.
\end{align*}
While the equality is evident, the inclusions follows from \eqref{eq:connection_projections_E} and from a property of continuous projections
\begin{align*}
 \mathcal{G}\sub{\bullet}[\cTNd] \subsetneq \cTNd \subsetneq \Lper{2}{\xRd}\quad\text{for }\bullet\in\{\cE,\cJ\}.
\end{align*}

Finally, the proof of \eqref{eq:connection_projections_E} follows from the connection of representations \eqref{eq:trig_connection} and from the fact that the Nyquist frequencies $\Vk\in\ZNd\setminus\ZNdr$ are left out in the definition of projections $\MBG\sub{\cE}_{\VN,\Vo}$ and $\MBG\sub{\cJ}_{\VN,\Vo}$, recall Definition~\ref{def:FD_proj_E}.
\end{proof}
\begin{remark}
The previous proof yields an alternative characterization of the conforming subspaces
\begin{align*}
\cEN &= \cE\bigcap\cTNd,
&
\cJN &= \cJ\bigcap\cTNd.
\end{align*}
\end{remark}

\section{Galerkin approximation with numerical integration}
\label{sec:gani}
This section deals with the discretization of \eqref{eq:homog_problem} by the Galerkin approximation with numerical integration (GaNi), a scheme which has been introduced and analyzed in \cite[Section~4.3]{VoZeMa2014FFTH} for the odd grids \eqref{eq:N_odd}.
Here, the method is generalized to the primal-dual setting and general grids, by utilizing the discretization strategy shown in Figure~\ref{fig:operators}.
\begin{figure}[htp]
\centering
\includegraphics[scale=0.8]{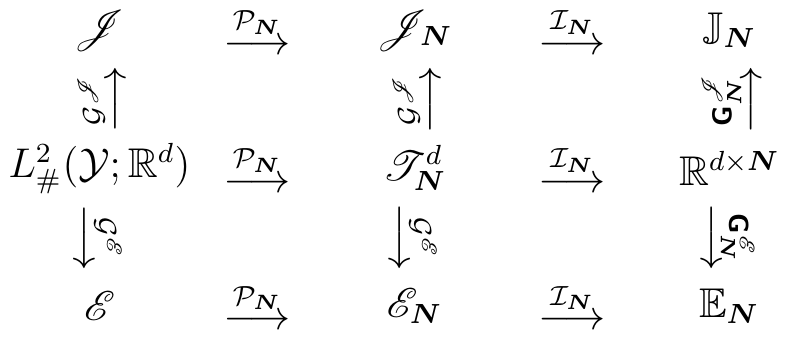}
\caption{Discretization strategy}
\label{fig:operators}
\end{figure}

The discretization consists in the approximation of bilinear forms \eqref{eq:bilinear_forms} using the interpolation operator \eqref{eq:QN_E} and the \rev{}{rectangular integration rule~\eqref{eq:from_L2_to_FD_E}, which will be referred to as the trapezoidal rule in order to adhere to the terminology used in spectral methods, e.g.~\cite[p.~94]{boyd_chebyshev_2001}, because the two integration rules coincide for periodic functions and regular grids. As a result, we obtain} the discretization-dependent forms $\bilfN{}{},\bilfNi{}{} :\cTNd\times\cTNd\rightarrow\xR$ given by
\begin{subequations}
\begin{align}
\label{eq:bilinear_forms_approx}
\bilfN{\Vu_\VN}{\Vv_\VN} &:= \scal{\QN[\TA\Vu_\VN]}{\Vv_\VN}_{\Lper{2}{\rev{}{\xCd}}},
&
\bilfNi{\Vu_\VN}{\Vv_\VN} &:= \scal{\QN[\TA^{-1}\Vu_\VN]}{\Vv_\VN}_{\Lper{2}{\rev{}{\xCd}}}.
\end{align}
\end{subequations}
\begin{definition}[Galerkin approximation with numerical integration (GaNi)]\label{def:GaNi}
Let the material coefficients satisfy \eqref{eq:A} and  $\TA\in\Cper{0}{\xRddspd}$. Then, the approximate primal and dual homogenized matrices $\AeffN, \BeffN \in \xR^{d\times d}$ are defined as
\begin{subequations}\label{eq:GaNi}
\begin{align}
\label{eq:GaNi_primal}
\scal{\AeffN\VE}{\VE}_{\xRd} &= \min_{\Ve_{\VN}\in \cEN } \bilfN{\VE+\Ve_{\VN}}{\VE+\Ve_{\VN}} =  \bilfN{\VE+\Ve_{\VN}\mac{\VE}}{\VE+\Ve_{\VN}\mac{\VE}},
\\
\label{eq:GaNi_dual}
\scal{\BeffN\VJ}{\VJ}_{\xRd} &= \min_{\Vj_{\VN}\in \cJN} \bilfNi{\VJ+\Vj_{\VN}}{\VJ+\Vj_{\VN}} = \bilfNi{\VJ+\Vj_{\VN}\mac{\VJ}}{\VJ+\Vj_{\VN}\mac{\VJ}},
\end{align}
\end{subequations}
for arbitrary $\VE,\VJ\in\xRd$. 
\end{definition}
\rev{}{Before we discuss the properties of solutions to GaNi in Remark~\ref{rem:gani_solvability}, we first introduce the fully discrete versions of the bilinear forms \eqref{eq:bilinear_forms_approx}.}
\begin{lemma}\label{lem:FD_of_GaNi}
Under assumptions of the Definition~\ref{def:GaNi}, we have
\begin{subequations}
\label{eq:gani_2_FD}
\begin{align}
\bilfN{\Vu_\VN}{\Vv_\VN} &=
\bilfDN{\MBu_\VN}{\MBv_\VN} := \scal{\MBA_\VN\MBu_\VN}{\MBv_\VN}_{\xXN},
\\
\bilfNi{\Vu_\VN}{\Vv_\VN} &=
\bilfDNi{\MBu_\VN}{\MBv_\VN} := \scal{\MBB_\VN\MBu_\VN}{\MBv_\VN}_{\xXN},
\end{align}
\end{subequations}
where
\begin{align*}
\MBu_\VN&:=\IN[\Vu_\VN]\in\xRdN,
&
\MBv_\VN&:=\IN[\Vv_\VN]\in\xRdN,
\end{align*}
and the components of the matrices
$\MBA_\VN,\MBB_\VN\in\xMN$ are defined as
\begin{align}\label{eq:FD_A_GaNi}
\MBA^{\Vk\Vm}_{\VN} &=
\TA(\Vx_\VN^{\Vk})\del_{\Vk\Vm},
&
\MBB_{\VN}^{\Vk\Vm} &=
\TAi(\Vx_\VN^{\Vk})\del_{\Vk\Vm}
\end{align}
for $\Vk,\Vm\in\ZNd$. Moreover, 
\begin{align}\label{eq:FD_inverse_matrix_equality}
\MBA_{\VN} = \MBB_{\VN}^{-1}.
\end{align}
\end{lemma}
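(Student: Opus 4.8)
The plan is to unwind the definitions of the bilinear forms $\bilfN{}{}$ and $\bilfNi{}{}$ given in~\eqref{eq:bilinear_forms_approx}, then use the isometry of the discretization operator $\IN$ from Lemma~\ref{lem:property_phi_IN_QN_PN_E}(i) together with the interpolation identity~\eqref{eq:IN_QN_property_E} to pass from the $\Lper{2}{}$-inner product on trigonometric polynomials to the fully discrete inner product on $\xXN$. First I would take $\Vu_\VN,\Vv_\VN\in\cTNd\subseteq\cTNtd$ and write $\bilfN{\Vu_\VN}{\Vv_\VN}=\scal{\QN[\TA\Vu_\VN]}{\Vv_\VN}_{\Lper{2}{\xCd}}$. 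Since $\TA\in\Cper{0}{\xRddspd}$, the product $\TA\Vu_\VN$ is a continuous periodic function, so $\QN[\TA\Vu_\VN]\in\cTNtd$ and also $\Vv_\VN\in\cTNtd$; Lemma~\ref{lem:property_phi_IN_QN_PN_E}(i) then gives
\begin{align*}
\scal{\QN[\TA\Vu_\VN]}{\Vv_\VN}_{\Lper{2}{\xCd}} = \scal{\IN\bigl[\QN[\TA\Vu_\VN]\bigr]}{\IN[\Vv_\VN]}_{\xRdN} = \scal{\IN[\TA\Vu_\VN]}{\IN[\Vv_\VN]}_{\xRdN},
\end{align*}
where the last equality is exactly~\eqref{eq:IN_QN_property_E} applied to $\Vu=\TA\Vu_\VN$.

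Next I would compute $\IN[\TA\Vu_\VN]$ componentwise using the definition~\eqref{eq:def_IN_E} of $\IN$: its value at grid point $\Vx_\VN^\Vk$ is $\TA(\Vx_\VN^\Vk)\Vu_\VN(\Vx_\VN^\Vk) = \TA(\Vx_\VN^\Vk)\MBu_\VN^\Vk$, which is precisely $(\MBA_\VN\MBu_\VN)^\Vk$ for the block-diagonal matrix $\MBA_\VN$ with blocks $\MBA_\VN^{\Vk\Vm}=\TA(\Vx_\VN^\Vk)\del_{\Vk\Vm}$ as in~\eqref{eq:FD_A_GaNi}. Substituting back yields $\scal{\MBA_\VN\MBu_\VN}{\MBv_\VN}_{\xXN}=\bilfDN{\MBu_\VN}{\MBv_\VN}$, which is the first claimed identity; the dual identity for $\bilfNi{}{}$ and $\MBB_\VN$ follows verbatim with $\TA$ replaced by $\TAi$ (note $\TAi\in\Cper{0}{\xRddspd}$ as well, since matrix inversion is continuous on $\xRddspd$ and $\TA$ is bounded away from singular by~\eqref{eq:A}).

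For the final equality~\eqref{eq:FD_inverse_matrix_equality}, I would observe that both $\MBA_\VN$ and $\MBB_\VN$ are block-diagonal in the grid index $\Vk$, so their product is block-diagonal with blocks $\MBA_\VN^{\Vk\Vk}\MBB_\VN^{\Vk\Vk}=\TA(\Vx_\VN^\Vk)\TAi(\Vx_\VN^\Vk)=\mI$; hence $\MBA_\VN\MBB_\VN=\MB{I}$, and similarly $\MBB_\VN\MBA_\VN=\MB{I}$, giving $\MBA_\VN=\MBB_\VN^{-1}$. I do not anticipate a genuine obstacle here — the argument is essentially bookkeeping — but the one point requiring a little care is making sure that the functions to which Lemma~\ref{lem:property_phi_IN_QN_PN_E}(i) is applied genuinely lie in the interpolation space $\cTNtd$ (so that the isometry and the identity $\IN\QN=\IN$ apply): this is where the continuity hypothesis $\TA\in\Cper{0}{\xRddspd}$ enters, ensuring $\TA\Vu_\VN\in\Cper{0}{\xCd}$ so that $\QN[\TA\Vu_\VN]$ and its $\IN$-image are well defined, and likewise for $\TAi$.
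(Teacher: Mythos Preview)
Your proposal is correct and follows essentially the same approach as the paper's proof, which simply cites Lemma~\ref{lem:property_phi_IN_QN_PN_E}(i) --- specifically the isometry~\eqref{eq:from_L2_to_FD_E} and the identity~\eqref{eq:IN_QN_property_E} --- together with the definition~\eqref{eq:def_IN_E} of $\IN$. You have spelled out the details the paper leaves implicit, including the verification that $\TA\Vu_\VN$ is continuous so that $\QN$ and $\IN$ apply, and the block-diagonal computation for~\eqref{eq:FD_inverse_matrix_equality}.
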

\begin{proof}The proof is a consequence of Lemma~\ref{lem:property_phi_IN_QN_PN_E} (i), particularly Eqs.~\eqref{eq:from_L2_to_FD_E} and \eqref{eq:IN_QN_property_E}, together with the definition of the operator $\IN$ in \eqref{eq:def_IN_E}.
\end{proof}
\rev{}{\begin{remark}\label{rem:gani_solvability}
The existence of approximate solutions to GaNi, $\Ve_{\VN}\mac{\VE}$, and their convergence to the solution of continuous problem, $\Ve\mac{\VE}$, follows from exactly the same arguments as for the odd grids \eqref{eq:N_odd} stated in \cite[Proposition~8]{VoZeMa2014FFTH}. Indeed, since the approximate bilinear forms \eqref{eq:bilinear_forms_approx} are defined locally, recall~\eqref{eq:gani_2_FD}, the
symmetry, positive definiteness, and boundedness follow from the assumption on coefficients \eqref{eq:A} and the identity $\|\MBv_\VN\|_{\xRdN} = \|\Vv_\VN\|_{\Lper{2}{\xRd}}$ for $\Vv_\VN=\INi[\MBv_\VN]\in\cTNd$; the existence and uniqueness of $\Ve_{\VN}\mac{\VE}$ is then inferred from the Lax-Milgram lemma. Its convergence to $\Ve\mac{\VE}$ follows from the first Strang lemma \cite{Strang1972varcrime}, which depends not only on the approximation properties of trigonometric polynomials $\inf_{\Vu_\VN\in\cTNd}\|\Ve\mac{\VE}-\Vu_\VN\|_{\Lper{2}{\xRd}}$, but also on the
consistency error of approximate bilinear forms $a_\VN$.
\begin{align}
\label{eq:consist_error}
\sup_{\Vv_\VN\in\cTNd} \frac{\bigl|\bilf{\Vu_\VN}{\Vv_\VN}-\bilfN{\Vu_\VN}{\Vv_\VN}\bigr|}{\|\Vv_\VN\|_{\Lper{2}{\xRd}}},
\end{align}
the so-called variational crime. The resulting estimate
\begin{align}\label{eq:converg_est}
\|\Ve\mac{\VE} - \Ve_\VN\mac{\VE}\| \leq C \left(\max_{\alp}\frac{Y_
\alp}{N_\alp} \right)^s
\end{align}
can be proven for smooth material coefficients with rate $s$ and constant $C$ independent on grid size $\VN$. For nonsmooth data, the regularization approach can be used, see \cite[Section~3.6]{Necas2012direct} and \cite[Secdtion~3.5, pp.~115--117]{Vondrejc2013PhD}, or the arguments can be adjusted to the Riemann integrable coefficients following the results by Schneider~\cite{Schneider2014convergence}. The treatment of the dual solutions $\Vj_\VN\mac{\VJ}$ is established by analogous arguments.
\end{remark}}
\begin{remark}
Recall that the dual formulation \eqref{eq:GaNi_dual} involves inverse coefficients $\TAi$. Interestingly, this property is maintained in the fully discrete formulation~\eqref{eq:FD_inverse_matrix_equality}, so that the assumptions of Proposition~\ref{lem:transform2dual} are met, leading to the duality results in Propositions~\ref{lem:FD_odd_problem} and \ref{lem:FD_even_problem}.
\end{remark}
\begin{remark}\label{rem:GaNi_vs_Ga}
The GaNi scheme coincides with the original Moulinec-Suquet method \cite{Moulinec1998NMC,Moulinec1994FFT} as shown in~\cite[Section~5.3]{VoZeMa2014FFTH} for the variational formulation and in \cite{ZeVoNoMa2010AFFTH,VoZeMa2012LNSC} for the Lippmann-Schwinger equation. 
The reason for using the trapezoidal integration rule in~\eqref{eq:bilinear_forms_approx} is that it can be applied to general coefficients, but the associated numerical scheme may cause a non-monotonous convergence of the approximate solutions, see Section~\ref{sec:results_homogenized_coef_odd}. We will show in Section~\ref{sec:evaluation_bounds} that the quadrature can be avoided for a wide class of coefficients, albeit at a higher computational cost. This procedure provides the Galerkin scheme without numerical integration, proposed theoretically in~\cite[Section~4.2]{VoZeMa2014FFTH}, and studied separately in \cite{Vondrejc2015FFTimproved}.
\end{remark}
The previous lemma, particularly \eqref{eq:gani_2_FD}, enables us to define the homogenization problem in the fully discrete setting that represents the matrix formulation of the GaNi.
\begin{corollary}[Fully discrete formulations of the GaNi]
\label{lem:FD_GaNi}
Under the assumptions of Definition~\ref{def:GaNi}, the primal and the dual homogenized matrices $\AeffN,\BeffN\in\xRdd$ satisfy 
\begin{subequations}\label{eq:FD_GaNi}
\begin{align}
\label{eq:FD_GaNi_prim}
\scal{\AeffN \VE}{\VE}_{\xRd} &= \min_{\MBe_\VN\in\xEN}
\bilfDN{\VE+\MBe_\VN}{\VE+\MBe_\VN} = \bilfDN{\VE+\MBe_\VN\mac{\VE}}{\VE+\MBe_\VN\mac{\VE}},
\\
\label{eq:FD_GaNi_dual}
\scal{\BeffN\VJ}{\VJ}_{\xRd} &=
\min_{\MBj_\VN\in\xJN}
\bilfDNi{\VJ+\MBj_\VN}{\VJ+\MBj_\VN} = \bilfDNi{\VJ+\MBj_\VN\mac{\VJ}}{\VJ+\MBj_\VN\mac{\VJ}}
\end{align}
\end{subequations}
for arbitrary $\VE,\VJ\in\xRd$.
Moreover, the discrete minimizers \rev{}{$\MBe_\VN\mac{\VE}$ and $\MBj\mac{\VJ}_\VN$ exist, are unique, and are connected to $\tVe_\VN\mac{\VE}$ and $\tVj_\VN\mac{\VJ}$ from~\eqref{eq:GaNi} via}
\begin{align*}
\IN\bigl[\tVe_\VN\mac{\VE} \bigr] &= \MBe_\VN\mac{\VE},
&
\IN\bigl[\tVj_\VN\mac{\VJ} \bigr] &= \MBj_\VN\mac{\VJ}.
\end{align*}
\end{corollary}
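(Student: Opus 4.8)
The assertion is a transcription of the GaNi minimization problems of Definition~\ref{def:GaNi} into the fully discrete (matrix) setting by means of the discretization operator $\IN$, so the plan is to transfer a finite-dimensional coercive quadratic minimization along an isometric isomorphism. First I would fix the identification of $\xRd$ with the constant trigonometric polynomials $\cUN\subset\cTNd$ and, through $\IN$, with the constant arrays $\xUN\subset\xXN$ (cf.\ the remark following Proposition~\ref{lem:transform2dual} and Remark~\ref{rem:identify_xRd_cUN}); under this identification $\IN[\VE]$ is the element of $\xUN$ all of whose blocks equal $\VE$, which renders the expressions $\VE+\MBe_\VN$ and $\VJ+\MBj_\VN$ appearing in \eqref{eq:FD_GaNi} meaningful and consistent with $\VE+\Ve_\VN$ and $\VJ+\Vj_\VN$ in \eqref{eq:GaNi}.

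Second, I would record that $\IN$ restricts to a linear bijection from $\cEN$ onto $\xEN$ and from $\cJN$ onto $\xJN$. This is exactly the content of the defining relations $\cEN=\INi[\xEN]$ and $\cJN=\INi[\xJN]$ combined with the inclusions $\cEN,\cJN\subseteq\cTNd\subseteq\cTNtd$ (see Lemmas~\ref{lem:FD_decomposition} and~\ref{lem:FD_decomposition_E}), on which $\IN$ is one-to-one by Lemma~\ref{lem:property_phi_IN_QN_PN_E}(i). Then, for any $\Ve_\VN\in\cEN$ with image $\MBe_\VN:=\IN[\Ve_\VN]\in\xEN$, linearity of $\IN$ gives $\IN[\VE+\Ve_\VN]=\IN[\VE]+\MBe_\VN$, and since $\VE+\Ve_\VN\in\cTNd$, Lemma~\ref{lem:FD_of_GaNi} yields $\bilfN{\VE+\Ve_\VN}{\VE+\Ve_\VN}=\bilfDN{\VE+\MBe_\VN}{\VE+\MBe_\VN}$. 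As $\Ve_\VN\mapsto\MBe_\VN$ runs bijectively over $\xEN$ while $\Ve_\VN$ runs over $\cEN$, taking the minimum of both sides establishes \eqref{eq:FD_GaNi_prim} and shows that the GaNi minimizer $\tVe_\VN\mac{\VE}$ is carried to the fully discrete minimizer $\MBe_\VN\mac{\VE}=\IN[\tVe_\VN\mac{\VE}]$. Existence and uniqueness of $\MBe_\VN\mac{\VE}$ then follow from those of $\tVe_\VN\mac{\VE}$ recorded in Remark~\ref{rem:gani_solvability}; alternatively, $\MBA_\VN$ is symmetric positive definite on $\xXN$ by \eqref{eq:FD_A_GaNi} and \eqref{eq:A}, so $\bilfDN{}{}$ is bounded and coercive on the finite-dimensional space $\xEN$ and the Lax--Milgram lemma applies directly.

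The dual statement \eqref{eq:FD_GaNi_dual} is then obtained verbatim, replacing $\TA,\cEN,\xEN,\MBA_\VN,\bilfN{}{},\bilfDN{}{}$ by $\TAi,\cJN,\xJN,\MBB_\VN,\bilfNi{}{},\bilfDNi{}{}$, using the second identity of Lemma~\ref{lem:FD_of_GaNi} and noting that $\MBB_\VN=\MBA_\VN^{-1}$ remains symmetric positive definite by \eqref{eq:FD_inverse_matrix_equality}.

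The only genuinely delicate point is the bookkeeping in the first step --- keeping the triple identification $\xRd\cong\cUN\cong\xUN$ of constant fields consistent across the abstract, the trigonometric-polynomial, and the fully discrete levels --- together with the observation that, although $\IN$ is isometric on $\cTNtd$ only and not on all of $\Cper{0}{\xCd}$, the subspaces $\cUN,\cEN,\cJN$ all lie inside $\cTNd\subseteq\cTNtd$; this is precisely where the conforming construction of Section~\ref{sec:trig_main} enters. Once this is settled, the remainder is the routine transfer of a coercive quadratic minimization through an isometric isomorphism and holds identically for odd and for general grids.
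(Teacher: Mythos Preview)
Your proposal is correct and follows exactly the route the paper intends: the Corollary is stated without an explicit proof because it is meant to follow immediately from Lemma~\ref{lem:FD_of_GaNi} (the identity $\bilfN{\Vu_\VN}{\Vv_\VN}=\bilfDN{\IN[\Vu_\VN]}{\IN[\Vv_\VN]}$) together with the bijection $\IN:\cEN\to\xEN$, $\IN:\cJN\to\xJN$ from Definition~\ref{def:FD_subspaces}/\ref{def:FD_subspaces_E} and the identification of $\xRd$ with $\xUN$ spelled out in Remark~\ref{rem:identify_xRd_cUN}. You have simply made explicit the bookkeeping that the paper leaves to the reader.
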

\begin{remark}
\label{rem:identify_xRd_cUN}
The discrete bilinear forms $\bilfDN{}{}$, $\bilfDNi{}{}$ are defined on $\xRdN\times\xRdN$, rendering the terms $\VE+\MBe_\VN$ and $\VJ+\MBj_\VN$ formally ill-defined. The sums need to be understood with the help of the isometric isomorphism $\IN$ from \eqref{eq:def_IN_E} that identifies $\xRd$ or $\cU$ with $\xUN$, e.g.
\begin{align*}
\bilfDN{\VE+\MBe_\VN}{\VE+\MBe_\VN} &= \bilfDN{\IN[\VE]+\MBe_\VN}{\IN[\VE]+\MBe_\VN}
\quad\text{with }
(\IN[\VE]+\MBe_\VN)^\Vk_\alp = E_\alp + \M{e}^\Vk_{\VN,\alp}.
\end{align*}
\end{remark}
\subsection{Duality for odd grids}
\label{sec:duality_FD_odd}
In this section, the perturbation duality theorem, Proposition~\ref{lem:transform2dual}, is applied to the fully discrete formulation of the GaNi \eqref{eq:FD_GaNi}.
For discretization with odd number of grid points \eqref{eq:N_odd}, it leads to a surprising result: the discrete formulations are mutually dual, so that the duality of continuous formulations \eqref{eq:homog_problem} is preserved under the discretization.
\begin{proposition}\label{lem:FD_odd_problem}
Assuming odd grids \eqref{eq:N_odd}, the following holds for the fully discrete homogenization problem \eqref{eq:FD_GaNi}:
\begin{enumerate}
\item The primal and the dual homogenized matrices are mutually inverse
\begin{align*}
\AeffN = \BeffN^{-1}.
\end{align*}
\item The primal and the dual discrete minimizers $\MBe^{(\alp)}_\VN\in\xEN$, $\MBj^{(\alp)}_\VN\in\xJN$ are related via
\begin{align}
\label{eq:FD_superposition_odd_A}
\cb{\beta} + \MBj^{(\beta)}_\VN &= \MBA_\VN\sum_{\alp} E_\alp ( \cb{\alp} + \MBe^{(\alp)}_\VN ),
\end{align}
where $\VE = \AeffN^{-1} \cb{\beta}$.
\end{enumerate}
\end{proposition}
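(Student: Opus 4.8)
The strategy is to recognize the statement as a direct specialization of the abstract perturbation-duality result, Proposition~\ref{lem:transform2dual}, to the fully discrete space $\xXN$ equipped with its Helmholtz-type decomposition valid for odd grids. The only genuine content will be checking that all hypotheses of that proposition are met; once this is done, parts (i) and (ii) follow by merely translating its conclusions and invoking linearity.

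First I would set up the abstract framework: take $\cH = \xXN$ with the inner product $\scal{\cdot}{\cdot}_{\xXN}$, and put $\rcU = \xUN$, $\rcE = \xEN$, $\rcJ = \xJN$. By Lemma~\ref{lem:FD_decomposition}(i), under the odd grid assumption \eqref{eq:N_odd} this is a genuine orthogonal decomposition \eqref{eq:FD_Helmholtz_decomposition}, and $\xUN$ is isometrically isomorphic to $\xRd$ via the operator $\IN$, which is exactly the identification already used in Remark~\ref{rem:identify_xRd_cUN}. Set $\rTA = \MBA_\VN$. Assumption \eqref{eq:A} together with the block-diagonal (pointwise) structure of $\MBA_\VN$ from Lemma~\ref{lem:FD_of_GaNi}, cf.\ also Remark~\ref{rem:gani_solvability}, makes $\MBA_\VN$ a symmetric, coercive, and bounded operator on $\xXN$, with $\cA$ and $\CA$ playing the role of $c_{\rTA}$ and $C_{\rTA}$. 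Crucially, by \eqref{eq:FD_inverse_matrix_equality} we have $\MBA_\VN^{-1} = \MBB_\VN$, so that $\bilfG{}{} = \bilfDN{}{}$ and $\bilfGi{}{} = \bilfDNi{}{}$; with these identifications the formulations \eqref{eq:general_homog_primal}--\eqref{eq:general_homog_dual} become precisely the fully discrete GaNi formulations \eqref{eq:FD_GaNi_prim}--\eqref{eq:FD_GaNi_dual}, whose minimizers exist and are unique by Lax--Milgram as recorded in Corollary~\ref{lem:FD_GaNi}. Hence $\rTA_\eff = \AeffN$, $\rTB_\eff = \BeffN$, and part (i), $\AeffN = \BeffN^{-1}$, is just \eqref{eq:duality_mutually_inverse_property}, while \eqref{eq:connection_primal_dual_fields} specializes to
\[
\VJ + \MBj_\VN\mac{\VJ} \;=\; \MBA_\VN\bigl[\VE + \MBe_\VN\mac{\VE}\bigr]\qquad\text{whenever }\VJ = \AeffN\VE.
\]

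For part (ii) I would fix $\beta$ and choose $\VE = \AeffN^{-1}\cb{\beta}$ (legitimate since $\AeffN$ is symmetric positive definite, hence regular, by part (i)), so that $\VJ = \AeffN\VE = \cb{\beta}$. By linearity of the discrete minimizers in the macroscopic loading, $\MBe_\VN\mac{\VE} = \sum_\alp E_\alp \MBe^{(\alp)}_\VN$ with $E_\alp$ the components of $\VE = \AeffN^{-1}\cb{\beta}$, $\MBe^{(\alp)}_\VN := \MBe_\VN\mac{\cb{\alp}}$, and likewise $\MBj_\VN\mac{\cb{\beta}} = \MBj^{(\beta)}_\VN$; also $\VE = \sum_\alp E_\alp \cb{\alp}$. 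Substituting into the displayed field relation gives $\cb{\beta} + \MBj^{(\beta)}_\VN = \MBA_\VN\sum_\alp E_\alp(\cb{\alp} + \MBe^{(\alp)}_\VN)$, which is exactly \eqref{eq:FD_superposition_odd_A}.

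The point requiring care --- and the one responsible for the duality surviving the discretization --- lies entirely in the verification above: one must know that the discrete decomposition of $\xXN$ is orthogonal with respect to the $\xXN$-inner product (Lemma~\ref{lem:FD_decomposition}), and, above all, that the trapezoidal quadrature underlying GaNi preserves the inverse relation $\MBA_\VN = \MBB_\VN^{-1}$. The latter holds because that quadrature renders both matrices block-diagonal with blocks $\TA(\Vx_\VN^\Vk)$ and $\TA^{-1}(\Vx_\VN^\Vk)$, cf.\ Lemma~\ref{lem:FD_of_GaNi}; it is this feature, special to odd grids where $\cTNd = \cTNtd$, that permits the literal application of Proposition~\ref{lem:transform2dual}. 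With these two facts in hand the proposition follows with no additional analysis.
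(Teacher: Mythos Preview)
Your proof is correct and follows exactly the paper's approach: apply Proposition~\ref{lem:transform2dual} with $\cH = \xXN$, the odd-grid Helmholtz decomposition \eqref{eq:FD_Helmholtz_decomposition} from Lemma~\ref{lem:FD_decomposition}, $\rTA = \MBA_\VN$, and the identifications $\bilfG{}{}=\bilfDN{}{}$, $\bilfGi{}{}=\bilfDNi{}{}$; the paper's proof is nothing more than this list of substitutions, and your additional verification of the hypotheses and the linearity argument for part~(ii) are precisely the details the paper leaves implicit. One small expository correction to your closing remark: the inverse relation $\MBA_\VN = \MBB_\VN^{-1}$ from Lemma~\ref{lem:FD_of_GaNi} holds for \emph{all} grids, not just odd ones; what is genuinely special to odd grids, and what actually fails otherwise, is the decomposition $\xXN = \xUN \oplus \xEN \oplus \xJN$ itself (cf.\ Lemma~\ref{lem:FD_decomposition_E}).
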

\begin{proof}
The proof is a direct consequence of Proposition~\ref{lem:transform2dual} for
\begin{align*}
\begin{array}{ccccccc}
\cH & = &\rcU & \oplus & \rcE & \oplus & \rcJ \\
\rotatebox{90}{=} &  & \rotatebox{90}{=} &  & \rotatebox{90}{=} &  & \rotatebox{90}{=} \\
\xXN & = &\xUN & \oplus & \xEN & \oplus & \xJN \\
\end{array}
\end{align*}
and
\begin{align*}
\bilfG{}{} &= \bilfDN{}{},
&
\bilfGi{}{} &= \bilfDNi{}{},
&
\mathring{\TA}_\eff &= \AeffN,
&
\mathring{\TB}_\eff &= \BeffN,
&
\rVe\mac{\VE} &= \MBe_\VN\mac{\VE},
&
\rVj\mac{\VJ} &= \MBj_\VN\mac{\VJ}.
\end{align*}
\end{proof}
\subsection{Duality for general grids}
\label{sec:duality_FD_non_odd}
For general grids, the fully discrete formulations \eqref{eq:FD_GaNi} lack the mutual duality as the fully discrete subspaces may not exhaust the whole $\xRdN$, i.e.
\begin{align*}
 \xUN \oplus \xEN \oplus \xJN \subseteq \xRdN\quad\text{and the equality holds only for odd grids \eqref{eq:N_odd}},
\end{align*}
cf. Figure~\ref{fig:subspaces_scheme}. However, Proposition~\ref{lem:FD_even_problem} below shows that the formulations for matrices $\AeffN$ and $\BeffN$ from~\eqref{eq:FD_GaNi} are in duality with
\begin{subequations}
\label{eq:FD_GaNi_tilde}
\begin{align}
\label{eq:FD_GaNi_prim_tilde}
\scal{\tilde{\TB}_{\eff,\VN}\VJ}{\VJ}_{\xRd} &=  \min_{\MBj_\VN\in \xJNapp}  \bilfDNi{\VJ+\MBj_\VN}{\VJ+\MBj_\VN} = \bilfDNi{\VJ+\tMBj_\VN\mac{\VJ}}{\VJ+\tMBj_\VN\mac{\VJ}},
\\
\label{eq:FD_GaNi_dual_tilde}
\scal{\tilde{\TA}_{\eff,\VN}\VE}{\VE}_{\xRd} &= 
\min_{\MBe_\VN\in \xENapp} \bilfDN{\VE+\MBe_\VN}{\VE+\MBe_\VN} = \bilfDN{\VE+\tMBe_\VN\mac{\VE}}{\VE+\tMBe_\VN\mac{\VE}},
\end{align}
\end{subequations}
when using the dual spaces $\xENapp$ and $\xJNapp$ from \eqref{eq:subspaces_xENapp_xJNapp}.
\begin{proposition}\label{lem:FD_even_problem}
The following holds for the fully discrete homogenization problems \eqref{eq:FD_GaNi} and \eqref{eq:FD_GaNi_tilde}:
\begin{enumerate}
  \item The homogenized matrices from the fully discrete formulations \eqref{eq:FD_GaNi_prim} and \eqref{eq:FD_GaNi_dual} coincide with those in \eqref{eq:FD_GaNi_prim_tilde} and \eqref{eq:FD_GaNi_dual_tilde}, respectively
  \begin{align}\label{eq:gani_eff_equality}
   \TA_{\eff,\VN} &= \tilde{\TB}^{-1}_{\eff,\VN},
   &
   \TB^{-1}_{\eff,\VN} &= \tilde{\TA}_{\eff,\VN}.
  \end{align}
  \item The discrete minimizers $\MBe^{(\beta)}_\VN\in\xEN$ and $\MBj^{(\alp)}_\VN\in\xJN$ of 
  \eqref{eq:FD_GaNi_prim} and \eqref{eq:FD_GaNi_dual}
  are related to the minimizers $\tMBe^{(\alp)}_{\VN}\in\xENapp$ and $\tMBj^{(\alp)}_{\VN}\in\xENapp$ of
  \eqref{eq:FD_GaNi_prim_tilde} and \eqref{eq:FD_GaNi_dual_tilde}
  via
\begin{align}
\label{eq:FD_superposition_even_A}
\cb{\beta} + \MBe^{(\beta)}_\VN &= \MBA_\VN^{-1}\sum_{\alp} J_\alp ( \cb{\alp} + \tMBj^{(\alp)}_{\VN} ),
&
\cb{\beta} + \MBj^{(\beta)}_\VN &= \MBA_\VN\sum_{\alp} E_\alp ( \cb{\alp} + \tMBe^{(\alp)}_{\VN} ),
\end{align}
with $\VE := \BeffN \cb{\beta}$ and $\VJ := \AeffN \cb{\beta}$.
\item  The primal and the dual homogenized matrices satisfy
\begin{align}\label{eq:Leff_FFTH_ineq}
 \BeffN^{-1} \preceq \AeffN.
\end{align}
\end{enumerate}
\end{proposition}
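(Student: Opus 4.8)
The plan is to deduce parts (i) and (ii) by applying the abstract duality result, Proposition~\ref{lem:transform2dual}, separately to each of the two orthogonal decompositions of $\xXN$ furnished by Lemma~\ref{lem:FD_decomposition_E}, namely $\xXN=\xUN\oplus\xEN\oplus\xJNapp$ and $\xXN=\xUN\oplus\xENapp\oplus\xJN$ from~\eqref{eq:FD_Helmholtz_decomposition_E}; part (iii) then follows from the inclusion $\xEN\subseteq\xENapp$. In both applications I set $\cH=\xXN$, let $\xUN$ (isometrically isomorphic to $\xRd$ via $\IN$, recall Remark~\ref{rem:identify_xRd_cUN}) play the role of $\rcU$, and take $\rTA=\MBA_\VN$. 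The hypotheses of Proposition~\ref{lem:transform2dual} are met because, by Lemma~\ref{lem:FD_of_GaNi}, the block-diagonal matrix $\MBA_\VN$ from~\eqref{eq:FD_A_GaNi} is symmetric, coercive, and bounded on $\xXN$ as a direct consequence of~\eqref{eq:A}, while $\rTA^{-1}=\MBA_\VN^{-1}=\MBB_\VN$ by~\eqref{eq:FD_inverse_matrix_equality}, so that the abstract dual form $\bilfGi{}{}$ coincides with $\bilfDNi{}{}$.

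For the first application, taking $\rcE=\xEN$ and $\rcJ=\xJNapp$, the abstract primal problem~\eqref{eq:general_homog_primal} is precisely~\eqref{eq:FD_GaNi_prim} and the abstract dual problem~\eqref{eq:general_homog_dual} is precisely~\eqref{eq:FD_GaNi_prim_tilde}; hence $\rTA_\eff=\AeffN$ and $\rTB_\eff=\tilde{\TB}_{\eff,\VN}$, and~\eqref{eq:duality_mutually_inverse_property} gives $\AeffN=\tilde{\TB}_{\eff,\VN}^{-1}$, the first identity in~\eqref{eq:gani_eff_equality}. The field relation~\eqref{eq:connection_primal_dual_fields}, together with the linear superposition of the auxiliary minimizers, then yields the first equation of~\eqref{eq:FD_superposition_even_A} after specializing $\VE=\cb{\beta}$ (so that $\VJ=\AeffN\cb{\beta}$) and solving for $\cb{\beta}+\MBe^{(\beta)}_\VN$. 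The second application, with $\rcE=\xENapp$ and $\rcJ=\xJN$, identifies~\eqref{eq:general_homog_primal} with~\eqref{eq:FD_GaNi_dual_tilde} and~\eqref{eq:general_homog_dual} with~\eqref{eq:FD_GaNi_dual}, so $\rTA_\eff=\tilde{\TA}_{\eff,\VN}$, $\rTB_\eff=\BeffN$, and~\eqref{eq:duality_mutually_inverse_property} gives $\tilde{\TA}_{\eff,\VN}=\BeffN^{-1}$, the second identity in~\eqref{eq:gani_eff_equality}; the corresponding instance of~\eqref{eq:connection_primal_dual_fields} with $\VJ=\cb{\beta}$ (so that $\VE=\BeffN\cb{\beta}$) produces the second equation of~\eqref{eq:FD_superposition_even_A}.

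Finally, for part (iii), I would observe that both~\eqref{eq:FD_GaNi_prim} and~\eqref{eq:FD_GaNi_dual_tilde} minimize the \emph{same} bilinear form $\bilfDN{}{}$, the former over $\xEN$ and the latter over the larger set $\xENapp$, which contains $\xEN$ by Lemma~\ref{lem:FD_decomposition_E}; hence $\scal{\tilde{\TA}_{\eff,\VN}\VE}{\VE}_{\xRd}\leq\scal{\AeffN\VE}{\VE}_{\xRd}$ for every $\VE\in\xRd$, i.e. $\tilde{\TA}_{\eff,\VN}\preceq\AeffN$, and combining this with the identity $\tilde{\TA}_{\eff,\VN}=\BeffN^{-1}$ from part (i) gives~\eqref{eq:Leff_FFTH_ineq}. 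The argument carries no analytic difficulty; the one point requiring care is the bookkeeping, since the two decompositions in~\eqref{eq:FD_Helmholtz_decomposition_E} assign the enlarged spaces $\xENapp$ and $\xJNapp$ opposite primal/dual roles, and one must keep the two slots straight to avoid conflating $\AeffN$ with $\tilde{\TA}_{\eff,\VN}$ (and $\BeffN$ with $\tilde{\TB}_{\eff,\VN}$) when invoking Proposition~\ref{lem:transform2dual} twice.
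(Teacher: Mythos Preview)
Your proposal is correct and follows essentially the same route as the paper: two applications of Proposition~\ref{lem:transform2dual} with the decompositions $\xXN=\xUN\oplus\xEN\oplus\xJNapp$ and $\xXN=\xUN\oplus\xENapp\oplus\xJN$ yield parts (i) and (ii), and the inclusion $\xEN\subseteq\xENapp$ combined with $\tilde{\TA}_{\eff,\VN}=\BeffN^{-1}$ gives part (iii). The bookkeeping caveat you flag is exactly the only subtlety.
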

\begin{proof}The proof of parts (i) and (ii) is a consequence of Proposition~\ref{lem:transform2dual}. The equivalence between \eqref{eq:FD_GaNi_prim} and \eqref{eq:FD_GaNi_prim_tilde} is shown by
\begin{align*}
\begin{array}{ccccccc}
\cH & = &\rcU & \oplus & \rcE & \oplus & \rcJ \\
\rotatebox{90}{=} &  & \rotatebox{90}{=} &  & \rotatebox{90}{=} &  & \rotatebox{90}{=} \\
\xXN & = &\xUN & \oplus & \xEN & \oplus & \xJNapp \\
\end{array}
\end{align*}
and
\begin{align*}
\bilfG{}{} &= \bilfDN{}{},
&
\bilfGi{}{} &= \bilfDNi{}{},
&
\mathring{\TA}_\eff &= \AeffN,
&
\mathring{\TB}_\eff &= \tilde{\TB}_{\eff,\VN},
&
\rVe\mac{\VE} &= \MBe_\VN\mac{\VE},
&
\rVj\mac{\VJ} &= \tilde{\MBj}_{\VN} \mac{\VJ}.
\end{align*}
The equivalence between
\eqref{eq:FD_GaNi_dual} and \eqref{eq:FD_GaNi_dual_tilde} follows from
\begin{align*}
\begin{array}{ccccccc}
\cH & = &\rcU & \oplus & \rcE & \oplus & \rcJ \\
\rotatebox{90}{=} &  & \rotatebox{90}{=} &  & \rotatebox{90}{=} &  & \rotatebox{90}{=} \\
\xXN & = &\xUN & \oplus & \xENapp & \oplus & \xJN \\
\end{array}
\end{align*}
and
\begin{align*}
\bilfG{}{} &= \bilfDN{}{},
&
\bilfGi{}{} &= \bilfDNi{}{},
&
\mathring{\TA}_\eff &= \tilde{\TA}_{\eff,\VN}
&
\mathring{\TB}_\eff &= \TB_{\eff,\VN},
&
\rVe\mac{\VE} &= \tilde{\MBe}_{\VN}\mac{\VE},
&
\rVj\mac{\VJ} &= \MBj_{\VN}\mac{\VJ}.
\end{align*}

The proof of the duality gap (iii) is based on the inclusion $\xEN\subseteq\xENapp$, recall Eq.~\eqref{eq:gani_eff_equality} in Lemma~\ref{lem:FD_decomposition_E}~(i), and the following inequality
\begin{align*}
\scal{\BeffN^{-1}\VE}{\VE}_\xRd &= \min_{\MBe_\VN\in \xENapp} \bilfDN{\VE+\MBe_\VN}{\VE+\MBe_\VN} 
\\
&\leq \min_{\MBe_\VN\in \xEN} \bilfDN{\VE+\MBe_\VN}{\VE+\MBe_\VN} = \scal{\AeffN\VE}{\VE}_\xRd,
\end{align*}
holding for an arbitrary $\VE\in\xRd$.
\end{proof}

\section{Evaluation of upper-lower bounds on homogenized properties}
\label{sec:evaluation_bounds}
As the GaNi scheme \eqref{eq:GaNi}, or its fully discrete relative \eqref{eq:FD_GaNi}, deliver conforming approximations to the minimizers of the homogenization problem~\eqref{eq:homog_problem}, i.e. $\Ve_\VN\mac{\alp}\in\cE_\VN\subset\cE$ and $\Vj_\VN\mac{\alp}\in\cJ_\VN\subset\cJ$, they can be utilized within the upper-lower bounds structure of Section~\ref{sec:upper-lower_bounds_general}. Details of these developments are gathered here with the emphasis on the evaluation of the bounds in a computationally efficient way. Recall that the GaNi scheme is defined with the approximate bilinear forms $a_\VN$ and $a_\VN^{-1}$, \eqref{eq:bilinear_forms_approx}, whereas the upper-lower bounds are obtained via bilinear forms of the continuous homogenization problem \eqref{eq:homog_problem},
\begin{subequations}
\label{eq:GaNi_bounds}
\begin{align}
\scal{\ol{\TA}_{\eff,\VN}\VE}{\VE}_{\xRd} &= \bilf{\VE+\tVe_{\VN}\mac{\VE}}{\VE+\tVe_{\VN}\mac{\VE}},
\\
\scal{\ol{\TB}_{\eff,\VN}\VJ}{\VJ}_{\xRd} &= \bilfi{\VJ+\tVj_{\VN}\mac{\VJ}}{\VJ+\tVj_{\VN}\mac{\VJ}}
\end{align}
\end{subequations}
and the mean of guaranteed bounds $\oul{\TA}_{\eff,\VN}$ with the guaranteed error $\mD_\VN$ reads as
\begin{subequations}
\begin{align}
\label{eq:GaNi_bounds_mean}
\oul{\TA}_{\eff,\VN} &= \frac{1}{2} \left( \ol{\TA}_{\eff,\VN} + \ol{\TB}_{\eff,\VN}^{-1} \right),
\\
\label{eq:GaNi_error}
\mD_\VN &= \frac{1}{2} \left( \ol{\TA}_{\eff,\VN} - \ol{\TB}_{\eff,\VN}^{-1} \right).
\end{align}
\end{subequations}

For an easier orientation among the matrices, we refer to their scheme in Figure~\ref{fig:scheme_homog}; the inequality on the last line is proven in Propositions~\ref{lem:FD_odd_problem} and \ref{lem:FD_even_problem}.
Notice that the effective matrices $\AeffN$ and $\BeffN$ of the GaNi \eqref{eq:GaNi} or \eqref{eq:FD_GaNi} are generally in no relation, in the sense of the L\"{o}wner partial order, to the homogenized matrix $\Aeff$ and to a posteriori upper-lower bounds $\oAeffN$ and $\oBeffN$, as confirmed with numerical experiments in Section~\ref{sec:numerical_experiments}.
\begin{figure}[htp]
\centering
\includegraphics[scale=.8]{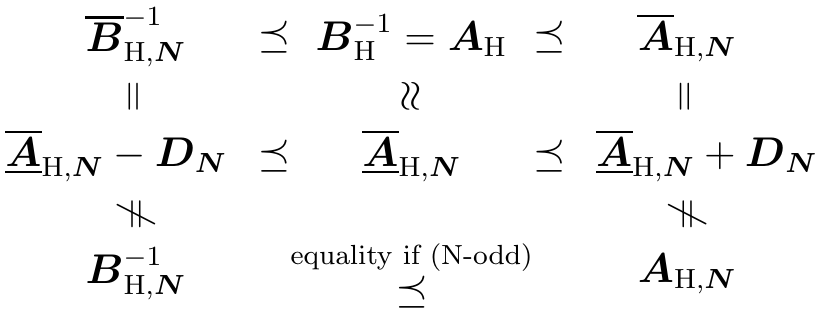}
\caption{Relations among homogenized matrices}
\label{fig:scheme_homog}
\end{figure}

Computation of the  bounds involves integrals of the type
\begin{align}\label{eq:integral2eval}
\scal{\TA \Vu_{\VN} }{ \Vv_{\VN} }
_{\Lper{2}{\xRd}}\quad\text{ for }\TA\in\Lper{\infty}{\xRdd}\text{ and }\Vu_\VN,\Vv_\VN\in\cTNd,
\end{align}
recall \eqref{eq:GaNi_bounds}. Notice that, due to the definition of spaces $\cEN$ and $\cJN$ in \eqref{eq:def_cUN_cEN_cJN}, the minimizers $\Ve_\VN\mac{\alp},\Vj_\VN\mac{\alp}$ always belong to $\cTNd$ \rev{}{that are defined on odd grids~\eqref{eq:N_odd}, recall~\eqref{eq:trig_space_Fourier}, we may consider only odd grids in this section without the loss of generality. This is because the integration now must be performed exactly, instead of approximately by the trapezoidal rule, in order to obtain guaranteed error bounds.}

\rev{}{In particular, we} show in Lemma~\ref{lem:Ga_full} that the term in \eqref{eq:integral2eval} can be evaluated in an analogous way to the GaNi, recall Corollary~\ref{lem:FD_GaNi}, but the resulting matrix becomes fully populated, rendering the estimates very costly.
Fortunately, we recover the block diagonal structure when defining the fully discrete quadratic forms on the double grid, Lemma~\ref{lem:integral_eval_double_grid}.
\begin{lemma}\label{lem:Ga_full}
For odd grids \eqref{eq:N_odd}, the integral \eqref{eq:integral2eval} equals to
\begin{align}\label{eq:integral2evalfull}
\scal{\TA\Vu_\VN}{\Vv_\VN}_{\Lper{2}{\xRd}} &= 
\scal{\hat{\MBA}_{\mathrm{full}}
\hat{\MBu}_\VN}{\hat{\MBv}_\VN}_{\xhXN} = \scal{\MBA_{\mathrm{full}}\MBu_\VN}{\MBv_\VN}_{\xXN},
\end{align}
where vectors $\MBu_\VN,\MBv_\VN\in\xXN$ and $\hat{\MBu}_\VN,\hat{\MBv}_\VN\in\xhXN$ are defined via
\begin{align*}
\MBu_\VN &= \IN[\Vu_\VN],
&
\MBv_\VN &= \IN[\Vv_\VN],
&
\hat{\MBu}_\VN &= \MBF_\VN\MBu_\VN,
&
\hat{\MBv}_\VN &= \MBF_\VN\MBv_\VN,
\end{align*}
and matrices $\hat{\MBA}_{\mathrm{full}}\in\xhMN$ and $\MBA_{\mathrm{full}}\in\xMN$ follow from
\begin{align*}
\bigl( \hat{\MBA}_{\mathrm{full}} \bigr)^{\Vl\Vk}
&= \frac{1}{\meas{\puc}} \int_{\puc} \TA(\Vx) \varphi_\Vk(\Vx)  \varphi_{-\Vl}(\Vx) \D{\Vx}
\quad\text{for }\Vk,\Vl\in\ZNd,
&
\MBA_{\mathrm{full}} &= \MBF_\VN \hat{\MBA}_{\mathrm{full}} \MBFi_\VN.
\end{align*}
\end{lemma}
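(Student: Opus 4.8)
The plan is to expand everything into Fourier series and use the orthogonality of the Fourier trigonometric polynomials \eqref{eq:Fourier_basis_functions_orthogonality}. First I would write $\Vu_\VN = \sum_{\Vk\in\ZNd}\hat{\MBu}_\VN^\Vk\varphi_\Vk$ and $\Vv_\VN = \sum_{\Vl\in\ZNd}\hat{\MBv}_\VN^\Vl\varphi_\Vl$ using the representation \eqref{eq:trig_connection}; here the odd grid assumption \eqref{eq:N_odd} guarantees $\ZNdr = \ZNd$ so that $\cTNd = \cTNtd$ and the conjugate-symmetry of the Fourier coefficients makes $\Vu_\VN,\Vv_\VN$ genuinely real-valued and square-integrable. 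Substituting into the integral and using bilinearity gives
\begin{align*}
\scal{\TA\Vu_\VN}{\Vv_\VN}_{\Lper{2}{\xRd}}
= \sum_{\Vk,\Vl\in\ZNd} \frac{1}{\meas{\puc}}\int_\puc \scal{\TA(\Vx)\varphi_\Vk(\Vx)\hat{\MBu}_\VN^\Vk}{\varphi_\Vl(\Vx)\hat{\MBv}_\VN^\Vl}_{\xCd}\D{\Vx}.
\end{align*}
Pulling the (scalar) basis functions out of the vector inner product and recalling that $\overline{\varphi_\Vl} = \varphi_{-\Vl}$, the spatial integral becomes exactly $\bigl(\hMBAGAfull\bigr)^{\Vl\Vk}\hat{\MBu}_\VN^\Vk$ contracted against $\hat{\MBv}_\VN^\Vl$, which establishes the first equality in \eqref{eq:integral2evalfull} with the claimed definition of $\hMBAGAfull$.

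Next I would pass to the real domain. This is purely a change of basis: by \eqref{eq:trig_connection} we have $\hat{\MBu}_\VN = \MBF_\VN\MBu_\VN$ and $\hat{\MBv}_\VN = \MBF_\VN\MBv_\VN$, and one checks from the definitions \eqref{eq:DFT} of $\MBF_\VN$, $\MBFi_\VN$ and the DFT orthogonality \eqref{eq:DFT_orthogonality} that $\MBF_\VN$ is (up to the normalization already built in) unitary with respect to the inner products $\scal{\cdot}{\cdot}_{\xhXN}$ and $\scal{\cdot}{\cdot}_{\xXN}$, i.e. $\scal{\MBF_\VN\MBu_\VN}{\MBF_\VN\MBv_\VN}_{\xhXN} = \scal{\MBu_\VN}{\MBv_\VN}_{\xXN}$. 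Hence
\begin{align*}
\scal{\hMBAGAfull\hat{\MBu}_\VN}{\hat{\MBv}_\VN}_{\xhXN}
= \scal{\hMBAGAfull\MBF_\VN\MBu_\VN}{\MBF_\VN\MBv_\VN}_{\xhXN}
= \scal{\MBFi_\VN\hMBAGAfull\MBF_\VN\MBu_\VN}{\MBu_\VN}_{\xXN}^{\!\!*},
\end{align*}
and substituting $\MBAGAfull := \MBF_\VN\hMBAGAfull\MBFi_\VN$ (noting $\MBF_\VN = \MBFi_\VN{}^{-1}$ by \eqref{eq:DFT_orthogonality}, so $\MBFi_\VN\hMBAGAfull\MBF_\VN = \MBF_\VN{}^{-1}\MBAGAfull\MBF_\VN\cdots$ is consistent after the adjoint identification) yields the second equality $\scal{\MBAGAfull\MBu_\VN}{\MBv_\VN}_{\xXN}$. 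The cleaner route is to observe directly that the real-domain inner product $\scal{\MBAGAfull\MBu_\VN}{\MBv_\VN}_{\xXN}$ and the Fourier-domain one differ only by conjugation of $\MBF_\VN$ on the two slots, which is exactly how $\MBF_\VN$ and $\MBFi_\VN$ are related, so the two expressions agree.

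I do not expect a serious obstacle here — the lemma is essentially bookkeeping. The one point that needs care is the correct placement of $\MBF_\VN$ versus $\MBFi_\VN$ (and their conjugates) when moving the transform across the inner product, since $\scal{\cdot}{\cdot}_{\xhXN}$ carries a complex conjugate on its second argument while $\scal{\cdot}{\cdot}_{\xXN}$ has the $1/\meas{\VN}$ normalization; matching these is precisely why $\MBA_{\mathrm{full}} = \MBF_\VN\hMBAGAfull\MBFi_\VN$ rather than the other way around. A secondary subtlety is checking that $\hMBAGAfull$ has the right Hermitian/reality structure so that $\MBAGAfull$ maps $\xXN$ into itself (real vectors to real vectors), which follows from $\TA$ being real-valued and the symmetry $\overline{\varphi_\Vk} = \varphi_{-\Vk}$, giving $\overline{(\hMBAGAfull)^{\Vl\Vk}} = (\hMBAGAfull)^{-\Vl,-\Vk}$. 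Once these sign/normalization checks are in place, the equalities in \eqref{eq:integral2evalfull} are immediate.
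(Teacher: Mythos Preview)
Your approach is essentially identical to the paper's: expand $\Vu_\VN,\Vv_\VN$ in the Fourier basis via \eqref{eq:trig_connection}, substitute into the $L^2$ inner product to read off $\hat{\MBA}_{\mathrm{full}}$, and then pass to the real domain by moving $\MBF_\VN$ across the inner product using its adjoint relation. The paper carries out the second step more cleanly by invoking $\MBF_\VN^\star = \tfrac{1}{\meas{\VN}}\MBFi_\VN$ directly, which avoids the slightly tangled bookkeeping in your display (where a $\MBv_\VN$ became a $\MBu_\VN$ and a stray ${}^*$ appeared); but the substance is the same.
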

\begin{proof}To obtain the first expression in \eqref{eq:integral2evalfull}, we represent the vectors in \eqref{eq:integral2eval} with their Fourier series $\Vu_{\VN} =
\sum_{\Vk\in\ZNd}\hat{\MBu}_\VN^\Vk\varphi_\Vk$, $\Vv_{\VN} =
\sum_{\Vl\in\ZNd}\hat{\MBv}_\VN^\Vl\varphi_\Vk$. Substitution into \eqref{eq:integral2eval} yields
\begin{align*}
\scal{\TA\Vu_\VN}{\Vv_\VN}_{L^2_{\per}} 
&= \sum_{\alp,\beta} \sum_{\Vk,\Vl\in\ZNd}
\frac{\hat{\MBu}_{\VN,\beta}^{\Vk} \hat{\MBv}_{\VN,\alp}^{\Vl}}{\meas{\puc}} \int_{\puc} \TA_{\alp\beta} \varphi_\Vk  \varphi_\Vl \D{\Vx} = \scal{\hat{\MBA}_{\mathrm{full}}
\hat{\MBu}_{\VN}}{\hat{\MBv}_{\VN}}_{\xhXN}.
\end{align*}
To obtain the last expression in \eqref{eq:integral2evalfull}, we map the Fourier coefficients with DFT matrix \eqref{eq:DFT} to obtain $\hat{\MBu}_\VN = \MBF_\VN\MBu_\VN$ and $\hat{\MBv}_\VN = \MBF_\VN\MBv_\VN$, cf. \eqref{eq:trig_connection}, 
from which we calculate
\begin{align*}
\scal{\hat{\MBA}_{\mathrm{full}} \hat{\MBu}_{\VN}}{\hat{\MBv}_{\VN}}_{\xhXN} &=
\scal{\hat{\MBA}_{\mathrm{full}} \MBF_\VN\MBu_{\VN}}{\MBF_\VN\MBv_{\VN}}_{\xhXN} =
\frac{1}{\pVN}\scal{\MBFi_\VN\hat{\MBA}_{\mathrm{full}} \MBF_\VN\MBu_{\VN}}{\MBv_{\VN}}_{\xhXN}
\\
&= \scal{\MBFi_\VN\hat{\MBA}_{\mathrm{full}} \MBF_\VN\MBu_{\VN}}{\MBv_{\VN}}_{\xXN},
\end{align*}
where we have utilized $\MBF_\VN^\star = \frac{1}{\pVN}\MBF_\VN^{-1}$.
\end{proof}
\begin{remark}\label{not:sub_N}
The sparse quadrature involves a projection to a finer grid denoted as 
\begin{align*}
\MBu_\VN = \IM[\Vu_\VN] \in \xR^{d\times\VM}
\quad\text{for }
\VM,\VN\in\xRd
\text{ such that }
M_\alp>N_\alp.
\end{align*}
Here, we decided to use the same subscript $\VN$ for the trigonometric polynomial $\Vu_\VN$ and its discrete representation $\MBu_\VN$ in order to highlight their polynomial degree and to avoid a profusion of notation. The actual dimension of $\MBu_\VN$ is understood implicitly from the context, so that the terms like $\scal{\MBA_\VM\MBu_\VN}{\MBu_\VN}_{\xR^{d\times\VM}}$ with $\MBA_\VM\in\bigl[\xR^{d\times\VM}\bigr]^2$ remain well-defined. 
\end{remark}
\begin{lemma}[Double-grid quadrature]
\label{lem:integral_eval_double_grid}
For odd grids \eqref{eq:N_odd}, the integral \eqref{eq:integral2eval} equals to
\begin{align*}
\scal{\TA\Vu_\VN}{\Vv_\VN}_{L^2_{\per}} &=
\scal{\MBA_\tVN\MBu_\VN}{\MBv_\VN}
_{\xRdtN}
\end{align*}
where $\MBu_\VN = \ItN[\Vu_\VN]$, $\MBv_\VN=\ItN[\Vu_\VN]\in\xXtN$, and
$\MBA_\tVN\in\xMtN$ has the components
\begin{align}\label{eq:MBA_GA}
\MBA_\tVN^{\Vk \Vm} &=
\del_{\Vk\Vm}\sum_{\Vn\in\ZNNd}
\omega_{\tVN}^{\Vk\Vn}
\hat{\TA}(\Vn)\in\xRdd.
\end{align}
\end{lemma}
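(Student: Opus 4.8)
The plan is to split the identity into two reductions: first replace $\TA$ by its truncation $\TA_\tVN:=\sum_{\Vn\in\ZNNd}\hat{\TA}(\Vn)\varphi_\Vn$ to the doubled-grid frequencies --- a continuous, real, matrix-valued trigonometric polynomial, since $\tVN=2\VN-\V{1}$ is odd and $\ZNNd$ is symmetric --- and then recognise the resulting exact integral as the trapezoidal quadrature on the doubled grid. The first reduction,
\[
\scal{\TA\Vu_\VN}{\Vv_\VN}_{\Lper{2}{\xRd}}=\scal{\TA_\tVN\Vu_\VN}{\Vv_\VN}_{\Lper{2}{\xRd}},
\]
is immediate from Lemma~\ref{lem:Ga_full}: the matrix there has entries $(\hat{\MBA}_{\mathrm{full}})^{\Vl\Vk}=\hat{\TA}(\Vl-\Vk)$, and for $\Vk,\Vl\in\ZNdr=\ZNd$ on an odd grid the index $\Vl-\Vk$ always lies in $\ZNNd$; since $\TA_\tVN$ agrees with $\TA$ on every Fourier mode in $\ZNNd$, the two choices produce the same matrix $\hat{\MBA}_{\mathrm{full}}$ and hence the same quadratic form.

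For the second reduction, note that $\MBA_\tVN$ is block diagonal with $\MBA_\tVN^{\Vk\Vk}=\sum_{\Vn\in\ZNNd}\omega_\tVN^{\Vk\Vn}\hat{\TA}(\Vn)=\TA_\tVN(\Vx_\tVN^{\Vk})$ by \eqref{eq:prop_phi_1_E}, so that $\MBA_\tVN\MBu_\VN=\ItN[\TA_\tVN\Vu_\VN]$ for $\MBu_\VN=\ItN[\Vu_\VN]$; combined with $\MBv_\VN=\ItN[\Vv_\VN]$ and Lemma~\ref{lem:property_phi_IN_QN_PN_E}(i) applied on the grid $\tVN$ (Eqs.~\eqref{eq:IN_QN_property_E} and \eqref{eq:from_L2_to_FD_E}, noting $\cTNd\subseteq\cTtNd$),
\[
\scal{\MBA_\tVN\MBu_\VN}{\MBv_\VN}_{\xRdtN}=\scal{\ItN[\TA_\tVN\Vu_\VN]}{\ItN[\Vv_\VN]}_{\xRdtN}=\scal{\mathcal{Q}_{\tVN}[\TA_\tVN\Vu_\VN]}{\Vv_\VN}_{\Lper{2}{\xRd}}.
\]
It remains to drop the interpolation operator $\mathcal{Q}_{\tVN}$, which is exactly where the prescription $\tVN=2\VN-\V{1}$ enters: $\TA_\tVN\Vu_\VN$ has Fourier support in $\{\Vn+\Vk:\Vn\in\ZNNd,\ \Vk\in\ZNd\}$, i.e.\ $|p_\alp|\le\tfrac32(N_\alp-1)$ for all $\alp$, while the aliasing identity $\widehat{\mathcal{Q}_{\tVN}[\Vg]}(\V{r})=\sum_{\V{p}}\hat{\Vg}(\V{p})$ --- the sum over all $\V{p}\in\Zd$ with $p_\alp\equiv r_\alp\pmod{2N_\alp-1}$ for every $\alp$, which follows from \eqref{eq:prop_phi_1_E} and \eqref{eq:DFT_orthogonality} --- shows that for $\V{r}\in\ZNd$ the only $\V{p}$ in that support is $\V{p}=\V{r}$, any other satisfying $|p_\alp|\ge(2N_\alp-1)-\tfrac12(N_\alp-1)=\tfrac{3N_\alp-1}{2}>\tfrac32(N_\alp-1)$. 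Thus $\widehat{\mathcal{Q}_{\tVN}[\TA_\tVN\Vu_\VN]}(\V{r})=\widehat{\TA_\tVN\Vu_\VN}(\V{r})$ for all $\V{r}\in\ZNd$, and since $\Vv_\VN$ has Fourier support in $\ZNd$, $\scal{\mathcal{Q}_{\tVN}[\TA_\tVN\Vu_\VN]}{\Vv_\VN}_{\Lper{2}{\xRd}}=\scal{\TA_\tVN\Vu_\VN}{\Vv_\VN}_{\Lper{2}{\xRd}}$. Chaining the two reductions gives the claim.

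The single genuinely delicate point is the aliasing estimate above, which is why the doubled grid must carry exactly $2\VN-\V{1}$ points and stay odd: it guarantees that sampling the degree-inflated product $\TA_\tVN\Vu_\VN$ on that grid introduces no spurious Fourier modes in $\ZNd$, so the trapezoidal rule reproduces the exact integral once paired with $\Vv_\VN\in\cTNd$. Everything else is routine bookkeeping of Fourier supports, the odd-grid identifications $\ZNdr=\ZNd$ and $\ZNNd=\{\Vk:|k_\alp|\le N_\alp-1\}$, the reality of $\TA_\tVN,\Vu_\VN,\Vv_\VN$ (so that the real- and complex-valued inner products appearing along the way coincide), Parseval's theorem, and the DFT orthogonality \eqref{eq:DFT_orthogonality}, all already established.
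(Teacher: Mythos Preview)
Your proof is correct but follows a different route from the paper's. The paper argues directly: since $\Vu_\VN,\Vv_\VN\in\cTNd$, each componentwise product $u_{\VN,\beta}v_{\VN,\alp}$ lies in $\cT_{\tVN}$ and can therefore be expanded exactly in the fundamental basis $\{\varphi_{\tVN,\Vk}\}_{\Vk\in\ZNNd}$ with coefficients $\M{u}_{\VN,\beta}^{\Vk}\M{v}_{\VN,\alp}^{\Vk}$; integrating this expansion against $\TA$ and unfolding the definition of $\varphi_{\tVN,\Vk}$ gives the claimed block-diagonal form in two lines. You instead (i) truncate $\TA$ to $\TA_\tVN$, justified through Lemma~\ref{lem:Ga_full} and the observation $\Vl-\Vk\in\ZNNd$ for $\Vk,\Vl\in\ZNd$, (ii) identify the discrete form as the trapezoidal rule on the doubled grid via the isometry of $\ItN$, and (iii) remove $\mathcal{Q}_{\tVN}$ by an explicit aliasing bound. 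The paper's argument is shorter and puts the key fact ``product of two $\cTNd$-polynomials is in $\cTtNd$'' front and center; your argument is more modular, reusing Lemma~\ref{lem:Ga_full} and Lemma~\ref{lem:property_phi_IN_QN_PN_E}, and it makes the anti-aliasing role of the choice $\tVN=2\VN-\V{1}$ very explicit, at the cost of more bookkeeping.
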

\begin{proof}Because the product of two trigonometric polynomials $\Vu_\VN\Vv_{\VN}\hide{=\bigl(u_{\VN,\alp}v_{\VN,\alp}\bigr)_{\alp}}\in\cTtNd$  has bounded frequencies, we can express it as
$$u_{\VN,\beta}v_{\VN,\alp} = \sum_{\Vk\in\ZNNd} u_{\VN,\beta}(\Vx_\tVN^\Vk)
v_{\VN,\alp}(\Vx_\tVN^\Vk) \varphi_{\tVN,\Vk} = \sum_{\Vk\in\ZNNd} \M{u}_{\VN,\beta}^\Vk
\M{v}_{\VN,\alp}^\Vk \varphi_{\tVN,\Vk}.$$
Substitution into \eqref{eq:integral2eval} and direct calculations reveal
\begin{align*}
\scal{\TA\Vu_\VN}{\Vv_\VN}_{L^2_{\per}} 
&= \sum_{\alp,\beta}
\sum_{\Vk\in\ZNNd} \M{u}_{\VN,\beta}^\Vk \M{v}_{\VN,\alp}^\Vk \frac{1}{\meas{\puc}}\int_\puc A_{\alp\beta}(\Vx)\varphi_{\tVN,\Vk}(\Vx)\D{\Vx}
\\
&= \sum_{\alp,\beta} \sum_{\Vk\in\ZNNd} \left( \sum_{\Vl\in\ZNNd}
\frac{\omega_{\tVN}^{-\Vk\Vl}}{\ptVN \meas{\puc}}
\int_\puc A_{\alp\beta}(\Vx)\varphi_{\Vl}(\Vx)\D{\Vx} 
\right)
\M{u}_{\VN,\beta}^\Vk \M{v}_{\VN,\alp}^\Vk.
\end{align*}
The statement of the lemma follows by substitution of $\Vl$ with $-\Vn$.
\end{proof}
To evaluate the matrix in \eqref{eq:MBA_GA}, we need to determine the Fourier coefficients $[\hat{A}_{\alp\beta}(\Vn)]^{\Vn\in\ZNNd}$.
In the present section, these are elaborated in detail for the matrix-inclusion composites, characterized by the coefficients in the form
\begin{align}\label{eq:A_with_inclusions}
\TA(\Vx) &= \TA\incl{0}+\sum_{j=1}^J f\incl{j} (\Vx-\Vx\incl{j}) \TA\incl{j}
\end{align}
where $\TA\incl{0}\in\xRdd$ represents the  coefficients of the matrix phase, matrices $\TA\incl{j}\in\xRdd$ with functions $f\incl{j}\in L^{\infty}_\per(\puc)$ for $j=0,\dotsc,J$
quantify the distribution of coefficients within inclusions, centered at $\Vx\incl{j}$, along with their geometry~(in short, the functions $f\incl{j}$ will be referred to as inclusion topologies).
\begin{lemma}\label{lem:bounds_for_matrix-inclusion}
The matrix
\eqref{eq:MBA_GA} for coefficients \eqref{eq:A_with_inclusions} is given by
\begin{align}\label{eq:doubleFD_GA_single_incl}
\MBA_{\tVN}^{\Vk\Vm} 
&= \del_{\Vk\Vm} \left[ \TA\incl{0}  + 
\sum_{j=1}^J \TA\incl{j} 
\left( \sum_{\Vn\in\ZtNd} \omega_{\tVN}^{\Vk\Vn}
\varphi_{-\Vn}(\Vx\incl{j})
\hat{f}\incl{j}(\Vn) \right)
\right]\in\xRdd,
\end{align}
where $\hat{f}\incl{j}(\Vn)$ for $j\in\{1,\dotsc,J\}$ and $\Vn\in\ZNd$ denote the Fourier coefficients \eqref{eq:FT_def} of inclusion topologies $f\incl{j}$.
\end{lemma}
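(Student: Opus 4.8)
The plan is to start from the general double-grid formula~\eqref{eq:MBA_GA}, namely $\MBA_\tVN^{\Vk\Vm} = \del_{\Vk\Vm}\sum_{\Vn\in\ZNNd}\omega_{\tVN}^{\Vk\Vn}\hat{\TA}(\Vn)$, and simply substitute the Fourier coefficients $\hat{\TA}(\Vn)$ of the matrix-inclusion coefficient field~\eqref{eq:A_with_inclusions}. The key computational ingredient is the behaviour of the Fourier transform~\eqref{eq:FT_def} under a spatial shift: for $g\incl{j}(\Vx) := f\incl{j}(\Vx-\Vx\incl{j})$ one has, by the change of variables $\Vy = \Vx - \Vx\incl{j}$ and $\puc$-periodicity, $\hat{g}\incl{j}(\Vn) = \varphi_{-\Vn}(\Vx\incl{j})\,\hat{f}\incl{j}(\Vn)$, because $\bfun{-\Vn}(\Vy+\Vx\incl{j}) = \bfun{-\Vn}(\Vy)\bfun{-\Vn}(\Vx\incl{j})$.

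First I would compute $\hat{\TA}(\Vn)$ termwise. The matrix phase $\TA\incl{0}$ is constant, so its Fourier transform is $\TA\incl{0}\del_{\Vn\Vo}$; each inclusion term contributes $\TA\incl{j}\,\hat{g}\incl{j}(\Vn) = \TA\incl{j}\,\varphi_{-\Vn}(\Vx\incl{j})\,\hat{f}\incl{j}(\Vn)$ by the shift identity above. Summing over $j$ gives
\begin{align*}
\hat{\TA}(\Vn) = \TA\incl{0}\del_{\Vn\Vo} + \sum_{j=1}^J \TA\incl{j}\,\varphi_{-\Vn}(\Vx\incl{j})\,\hat{f}\incl{j}(\Vn).
\end{align*}
Next I would insert this into~\eqref{eq:MBA_GA}. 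For the constant part, $\sum_{\Vn\in\ZNNd}\omega_{\tVN}^{\Vk\Vn}\TA\incl{0}\del_{\Vn\Vo} = \TA\incl{0}$ since $\omega_{\tVN}^{\Vk\Vo}=1$ and $\Vo\in\ZNNd$. For each inclusion, linearity lets me pull $\TA\incl{j}$ out of the $\Vn$-sum (it is $\Vn$-independent), which yields exactly the bracketed expression in~\eqref{eq:doubleFD_GA_single_incl}, with the index set written as $\ZtNd=\ZNNd$ to match the notation there.

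I do not expect any serious obstacle here; the statement is essentially a direct substitution, and the only point requiring a line of care is the shift property of the Fourier coefficients under translation of the inclusion topology, which is immediate from the definition~\eqref{eq:FT_def} together with periodicity. A minor bookkeeping point is to keep the finite index set $\ZNNd$ fixed throughout (no truncation is performed — the product structure of~\eqref{eq:MBA_GA} already restricts to this set), so that no convergence issue of the type discussed for the Brisard--Dormieux approach arises. The diagonal Kronecker factor $\del_{\Vk\Vm}$ carries through unchanged from~\eqref{eq:MBA_GA}.
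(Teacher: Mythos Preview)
Your proposal is correct and follows essentially the same approach as the paper: both start from~\eqref{eq:MBA_GA}, split the constant matrix phase from the inclusion terms, and use the shift property $\varphi_{-\Vn}(\Vx+\Vx\incl{j}) = \varphi_{-\Vn}(\Vx)\varphi_{-\Vn}(\Vx\incl{j})$ to reduce the Fourier coefficient of each shifted inclusion to $\varphi_{-\Vn}(\Vx\incl{j})\hat{f}\incl{j}(\Vn)$. The paper carries the integral along explicitly rather than invoking the shift property by name, but the argument is the same.
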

\begin{proof}
Using basic properties of the Fourier trigonometric polynomials, namely
$\int_{\puc}\varphi_\Vn(\Vx)\D{\Vx} = \meas{\puc}\del_{\V{0}\Vn}$ and
$\varphi_\Vn(\Vx + \Vx\incl{j}) = \varphi_\Vn(\Vx) \varphi_\Vn(\Vx\incl{j})$,
we deduce
\begin{align*}
\MBA_{\tVN}^{\Vk\Vm} &= \del_{\Vk\Vm} \sum_{\Vn\in\ZNNd}
\omega_{\tVN}^{\Vk\Vn}
\hat{\TA}(\Vn)
\\
&= \del_{\Vk\Vm} \sum_{\Vn\in\ZtNd}
\frac{\omega_{\tVN}^{\Vk\Vn}}{\meas{\puc}} \int_{\puc} \left[
\TA\incl{0} + \sum_{j=1}^J
\TA\incl{j}f\incl{j}(\Vx-\Vx\incl{j})\right]\varphi_{-\Vn}(\Vx) \D{\Vx}
\\
&= \del_{\Vk\Vm} \left[ \TA\incl{0}  + \sum_{j=1}^J
\TA\incl{j} \sum_{\Vn\in\ZtNd}
\frac{\omega_{\tVN}^{\Vk\Vn}}{\meas{\puc}} \int_\puc f\incl{j}(\Vx)
\varphi_{-\Vn}(\Vx) \varphi_{-\Vn}(\Vx\incl{j}) \D{\Vx} \right].
\end{align*}
\end{proof}
\begin{remark}\label{rem:inclusion}
An example of the inclusion topology from \eqref{eq:A_with_inclusions} is provided by a rectangle/cuboid of side lengths $0<h_\alp\leq Y_\alp$ centered at the origin, i.e.
\begin{align*}
\rect_{\Vh}(\Vx) &=
\begin{cases}
1&\text{if }|x_\alp|<\frac{h_\alp}{2}\text{ for all }\alp
\\
0&\text{otherwise}
\end{cases},
&
\widehat{\rect}_{\Vh}(\Vm) &
= \frac{1}{\meas{\puc}} \prod_{\alp} h_{\alp}  \sinc\left(
\frac{h_\alpha m_{\alp}}{Y_{\alp}}\right),
\end{align*}
where 
\begin{align*}
 \sinc (x) = 
\begin{cases}
  1&\text{for } x=0
\\
\frac{\sin(\pi x)}{\pi x}&\text{for }x\neq 0
\end{cases}.
\end{align*}
This topology is utilized in numerical examples in Section~\ref{sec:numerical_experiments} and corresponds to pixel or voxel-wise definition of material coefficients, which are commonly produced by imaging techniques such as tomography or microscopy.
However, \rev{}{a straightforward implementation based on~\eqref{eq:integral2eval} and \eqref{eq:A_with_inclusions} is computationally expensive, but its efficiency can be substantially increased by FFT algorithm as explained in \cite{Vondrejc2015FFTimproved}.}
Other examples of inclusion topologies, such as spherical and bilinear, can be found in \cite[pages~137--138]{Vondrejc2013PhD}.
\end{remark}
\begin{remark}[Types of numerical integration]The trapezoidal integration used in GaNi scheme \eqref{eq:GaNi} leads to the algorithm defined by Moulinec and Suquet \cite{Moulinec1994FFT}. 
In \cite[Section~13.3.2]{Nemat-Nasser1999}, the exact integration formula leading to the fully populated matrix according to Lemma~\ref{lem:Ga_full} was  used for the Hashin-Shtrikman functional with piece-wise constant material coefficients. Later, the Fourier coefficients of individual inclusions have been incorporated as the so-called shape functions in~\cite{Bonnet2007,Monchiet2012polarization,Monchiet2013conduct} to enhance FFT-based homogenization schemes. Our results thus explain their good performance and introduce the numerical quadrature on double grid even in a more general setting.
\hide{also in Remark~\ref{rem:GaNi_vs_Ga}}
\end{remark}

\section{Computational aspects}
\label{sec:computational_issues}
Here, we discuss computational aspects related to the determination of upper-lower bounds. Section~\ref{sec:CG_solution} deals with the calculation of minimizers by the Conjugate gradients algorithms, while Section~\ref{sec:algorithm} gathers remarks on algorithm development and implementation issues.
\subsection{Conjugate gradients}
\label{sec:CG_solution}
Restricting our attention to the primal problem \eqref{eq:FD_GaNi_prim}, we are left with the minimization of a quadratic function over a subspace
\begin{align}\label{eq:CG_minimization}
\MBe\mac{\VE}_\VN = \argmin_{\MBe_\VN\in\xEN}\bilfDN{\VE+\MBe_\VN}{\VE+\MBe_\VN}.
\end{align}
This problem is suitable for the Conjugate Gradients~(CG) method, as it involves symmetric and positive definite forms.

According to \cite[Section~5.3]{VoZeMa2014FFTH}, the  problem \eqref{eq:CG_minimization} is equivalent to the solution of a linear system. Indeed, the minimizer satisfy the stationarity condition
\begin{align*}
\bilfDN{\MBe\mac{\VE}_\VN}{\MBv} = -\bilfDN{\VE}{\MBv}\quad\forall\MBv\in\xEN.
\end{align*}
Using $\MBG_{\VN,\Vo}\sub{\cE}$, an orthogonal (symmetric) projection on $\xEN$ from Definition~\ref{def:FD_proj_E}, we proceed to
\begin{align*}
\bilfDN{\MBe\mac{\VE}_\VN}{\MBG_{\VN,\Vo}\sub{\cE}\MBv} &= -\bilfDN{\VE}{\MBG_{\VN,\Vo}\sub{\cE}\MBv}
\quad\forall\MBv\in\xRdN,
\\
\scal{\MBG_{\VN,\Vo}\sub{\cE} \MBA_\VN \MBe\mac{\VE}_\VN}{\MBv}_{\xRdN} &= -\scal{\MBG_{\VN,\Vo}\sub{\cE} \MBA_\VN \VE}{\MBv}_{\xRdN}
\quad\forall\MBv\in\xRdN.
\end{align*}
Because the space of test functions was enlarged to $\xRdN$, we pass to a linear system
\begin{align*}
\underbrace{\MBG_{\VN,\Vo}\sub{\cE}\MBA_\VN}_{\MB{C}}\underbrace{\MBe\mac{\VE}_\VN}_{\MB{x}} &= \underbrace{-\MBG_{\VN,\Vo}\sub{\cE}\MBA_\VN\VE}_{\MB{b}}
\quad\text{for }
\MBe\mac{\VE}_\VN\in\xEN
\end{align*}
with $\MBA_\VN$ defined in \eqref{eq:FD_A_GaNi}. Thus, the minimization of \eqref{eq:CG_minimization} can be performed by CG applied to the linear system
\begin{align}\label{eq:linear_system_primal}
 \MB{C}\MB{x} = \MB{b}\quad\text{for }
 \MB{C}=\MBFi_\VN\MBhG\sub{\cE}\MBF_\VN\MBA_\VN 
\end{align}
with an initial approximation $\MB{x}_{(0)}\in\xEN$, \cite{VoZeMa2012LNSC}. 
By analogous arguments, the minimizers of the dual problem \eqref{eq:FD_GaNi_dual} satisfy the linear systems
\begin{align}\label{eq:linear_system_dual}
\underbrace{\MBG_{\VN,\Vo}\sub{\cJ}\MBA_\VN^{-1}}_{\MB{C}}\underbrace{\MBj\mac{\VJ}_\VN}_{\MB{x}} &= \underbrace{-\MBG_{\VN,\Vo}\sub{\cJ}\MBA_\VN^{-1}\VJ}_{\MB{b}}
\quad\text{for }
\MBj\mac{\VJ}_\VN\in\xJN
\end{align}
that are solvable by CG
with an initial approximation $\MB{x}_{(0)}\in\xJN$. 
\rev{}{\begin{remark}
Because of the involvement of the projection $\MBG_{\VN,\Vo}\sub{\cE}$, the matrix $\MB{C}$ in \eqref{eq:linear_system_primal} is singular on $\xRdN$, but regular on a subspace $\xEN$. The convergence of CG is ensured by~\cite[Lemma~13]{VoZeMa2014FFTH}, showing that the Krylov spaces generated by an arbitrary $\MB{x}_{(0)}\in\xEN$ remain in $\xEN$, and so are all the iterates generated by CG. The analogous arguments hold also for the discrete dual formulation~\eqref{eq:linear_system_dual}.
\end{remark}}
\subsection{Implementation issues}
\label{sec:algorithm}
\begin{algorithm}
For coefficients $\TA\in L^{\infty}_{\mathrm{per}}(\puc;\xR^{d\times d})$, the evaluation of upper-lower bounds on homogenized matrix consists of the following steps.
\begin{enumerate}
\item Set the number of grid points $\VN$ and assemble matrices $\MBA_\VN$, $\MBA_\VN^{-1}$, $\MBhG_{\Vo}\sub{\cE}$, $\MBhG_{\Vo}\sub{\cJ}\in\xMN$ according to Definition~\ref{def:FD_projections} and Eq.~\eqref{eq:FD_A_GaNi}.
\item For $\alp=1,\dotsc,d$, find discrete primal and dual minimizers $\MBe^{(\alp)}_\VN\in\xEN$, $\MBj^{(\alp)}_\VN\in\xJN$ as solutions to linear systems
\eqref{eq:linear_system_primal} and \eqref{eq:linear_system_dual} 
for $\VE = \VJ = \cb{\alp}$.
\item Evaluate upper-lower bounds \eqref{eq:GaNi_bounds} according to Lemmas~\ref{lem:integral_eval_double_grid} and~\ref{lem:bounds_for_matrix-inclusion}.
\end{enumerate}
\end{algorithm}
\begin{remark}
The matrices in step (i) are block diagonal leading to a substantial reduction in memory requirements. In step (ii), the solution of linear systems requires only matrix-vector multiplications involving sequential application of matrices $\MBA_\VN,\MBF_\VN,\MBhG\sub{\cE}$, and $\MBFi_\VN$. The computational cost is dominated by \rev{}{multiplications with} DFT matrices $\MBF_\VN$ and $\MBFi_\VN$ that are performed only in $\mathcal{O}(|\VN|\log|\VN|)$ operations by the FFT algorithm.
\end{remark}
\begin{remark}[Convergence criteria]
Regarding step (ii), initial approximations to CG are set to the zero vector and the convergence criterion is based on the norm of residuum, i.e.  $\|\MB{r}_{(i)}\|_{\xRdN}\leq \ep \|\VE\|_2$ with  $\MB{r}_{(i)}=\rev{}{ -\MBG_{\VN,\Vo}\sub{\cE}\MBA_\VN(\MBx_{(i)}+\VE)}$ and $\MBx_{(i)}$ denoting $i$-th iterate.
The tolerance is set to $\ep=10^{-8}$ in order to ensure that the overall error is dominated by the discretization error instead of the algebraic one.
The norm for residuum $\|\MB{r}_{(i)}\|_{\xRdN}$, due to Parseval's theorem, equals to $\|\INi[\MB{r}_{(i)}]\|_{\Lper{2}{\xRd}}$, the $L^2_\per$-norm of corresponding trigonometric polynomial. The dual case is treated in an analogous way.
\end{remark}
\begin{remark}[Divergence-free convergence criterion]
\label{rem:div_norm}
The most commonly used termination criterion in FFT-based algorithms is based on the divergence-free condition for the dual fields, $\MBA_\VN\MBe_\VN\mac{\alp}\in \xJN$ with $\MBA_\VN$ from \eqref{eq:FD_A_GaNi}, \cite{Moulinec1994FFT,Moulinec1998NMC,Michel2000CMB,Moulinec2014comparison}. Our analysis reveals that this criterion is reasonable only for the odd grids~\eqref{eq:N_odd}, namely
\begin{align*}
\MBe_\VN\mac{\alp}\in\xEN \Longleftrightarrow \MBA_\VN\MBe_\VN\mac{\alp}\in\xJN,
\end{align*}
cf.~Proposition~\ref{lem:FD_odd_problem}.
Such property is lost for general grids when either minimizers or dual fields are conforming only up to the Nyquist frequencies $\Vk\in\ZNd\setminus\ZNdr$, so that
\begin{align*}
\MBe_\VN\mac{\alp}\in\xEN 
\Longrightarrow
\MBA_\VN\MBe_\VN\mac{\alp}\in\xJNapp,
&&
\text{or}
&&
\tMBe_\VN\mac{\alp}\in\xENapp
\Longrightarrow
\MBA_\VN \tMBe_\VN\mac{\alp} \in \xJN,
\end{align*}
recall Proposition~\ref{lem:FD_even_problem}. This observation is in agreement with \cite[Section~2.4.2]{Moulinec1998NMC}, where the projection $\MBG_{\VN,\mI}^{\cE}$ from~Definition~\ref{def:FD_proj_E} was utilized to obtain divergence-free fields.
\end{remark}
\begin{remark}
The matrix \eqref{eq:doubleFD_GA_single_incl}  needed in step (ii) can be assembled in an efficient way. The Fourier coefficients \eqref{eq:FT_def} of each inclusion topology  $[\hat{f}\incl{j}(\Vm)]^{\Vm\in\ZtNd}$ for $j=1,\dotsc,J$ are evaluated in the closed form and shifted by distance $\Vx\incl{j}$ to account for its position; the shift corresponds to element-wise multiplication by the matrix $
[\varphi_{-\Vm}(\Vx\incl{j})]^{\Vm\in\ZtNd}$. The sum over $\Vk\in\ZtNd$ can be performed with the FFT algorithm.
\end{remark}
\begin{remark}[Avoiding the solution of dual formulation]
For odd grids \eqref{eq:N_odd}, the dual discrete minimizers $\MBj^{(\alp)}_\VN$ can be obtained from Eq.~\eqref{eq:FD_superposition_odd_A} if the original minimizers $\MBe^{(\alp)}_\VN$ are the exact solutions to the corresponding linear systems, see Section~\ref{sec:CG_solution}.
In reality, the linear systems are solved only approximately, so that $\MBj^{(\alp)}_\VN\notin\xJN$. This non-conformity can be corrected by the projection operator $\MBG_{\Vo}\sub{\cJ}$ and, when $\MBA_\VN$ is badly conditioned, by performing several CG iterations for the dual formulation, recall \eqref{eq:FD_GaNi_dual} and \eqref{eq:linear_system_dual}.
\end{remark}
\begin{remark}[Arbitrary accurate bounds]
\label{rem:conv_eN2e}
\rev{}{Thanks to the estimates on guaranteed error \eqref{eq:estimate_guaranteed_error} and the convergence results \cite{Vondrejc2013PhD,VoZeMa2014FFTH,Schneider2014convergence} discussed in Remark~\ref{rem:gani_solvability}, the two-sided bounds on homogenized properties can be made arbitrarily accurate for sufficiently fine discretizations.}
\end{remark}

\section{Numerical experiments}
\label{sec:numerical_experiments}
This section is dedicated to numerical experiments supporting our theoretical
results, especially on the primal-dual structure and convergence of homogenized
matrices. The calculations in
Sections~\ref{sec:results_homogenized_coef_odd}
and~\ref{sec:results_homogenized_coef_even} are performed on a
two-dimensional cell with a square inclusion first, in order to demonstrate 
the difference between odd and non-odd discretization grids and to study the
behavior of upper-lower bounds as a function of grid spacing and contrast in coefficients.
Section~\ref{sec:engineering_example} deals with the determination of
effective thermal conductivity of an alkali-activated fly ash foam described
with a high-resolution bitmap. All results in this section were obtained with
an open-source Python library FFTHomPy available at \url{https://github.com/vondrejc/FFTHomPy}.

In Sections~\ref{sec:results_homogenized_coef_odd} and~\ref{sec:results_homogenized_coef_even}, we consider problems with coefficients defined on the periodic cell $\puc = (-1,1)\times(-1,1) \subset \xR^2$ via
\begin{align*}
\TA(\Vx) = [1 + \rho f(\Vx)] \mI \quad\text{for }\Vx\in\puc,
\end{align*}
where $\mI\in\xR^{2\times 2}$ is the identity matrix, $f: \puc\rightarrow \xR$ is the topology function introduced in Remark~\ref{rem:inclusion}, and $\rho\in\{10,10^3\}$ is the phase contrast. Three types of square inclusions are considered, namely
\begin{subequations}
\begin{align}
\label{eq:S}
f(\Vx) &= 
\begin{cases}
1&\text{if }|x_\alpha|<\frac{3}{5}\text{ for all }\alpha\\
0&\text{otherwise}
\end{cases},
\\
\label{eq:S1}
f(\Vx) &= 
\begin{cases}
1&\text{if }|x_\alpha|<\frac{3}{4}\text{ for all }\alpha\\
0&\text{otherwise}
\end{cases},
\\
\label{eq:S2}
f(\Vx) &= 
\begin{cases}
1&\text{if }|x_\alpha|\leq\frac{3}{4}\text{ for all }\alpha \\
0&\text{otherwise}
\end{cases}.
\end{align}
\end{subequations}
The square \eqref{eq:S} is discretized with odd number of points $\VN = (n,n)$ for $n\in\{5\cdot 3^j:j = 0,1,\dotsc,6\}$, see Figure~\ref{fig:square_S}, while squares \eqref{eq:S1} and \eqref{eq:S2} with even number of points, $n\in\{2^j:j=2,3,\dotsc,10\}$, Figure~\ref{fig:square_S12}. Because \rev{}{all inclusions are symmetric with respect to the origin and the material phases are isotropic, the homogenized matrices are proportional to identity $\mI$ and only one diagonal component needs to be plotted in what follows}.

\begin{figure}[htp]
\centering
\subfigure[\scriptsize Topology \eqref{eq:S} and odd grids]{
\includegraphics[scale=0.6]{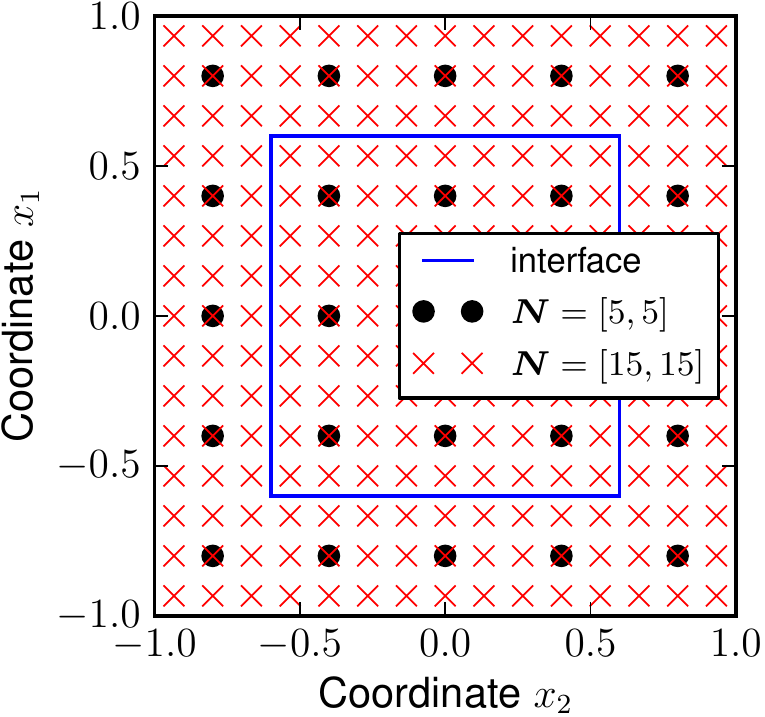}
\label{fig:square_S}}
\subfigure[\scriptsize Topologies \eqref{eq:S1}, \eqref{eq:S2} and even grids]{
\includegraphics[scale=0.6]{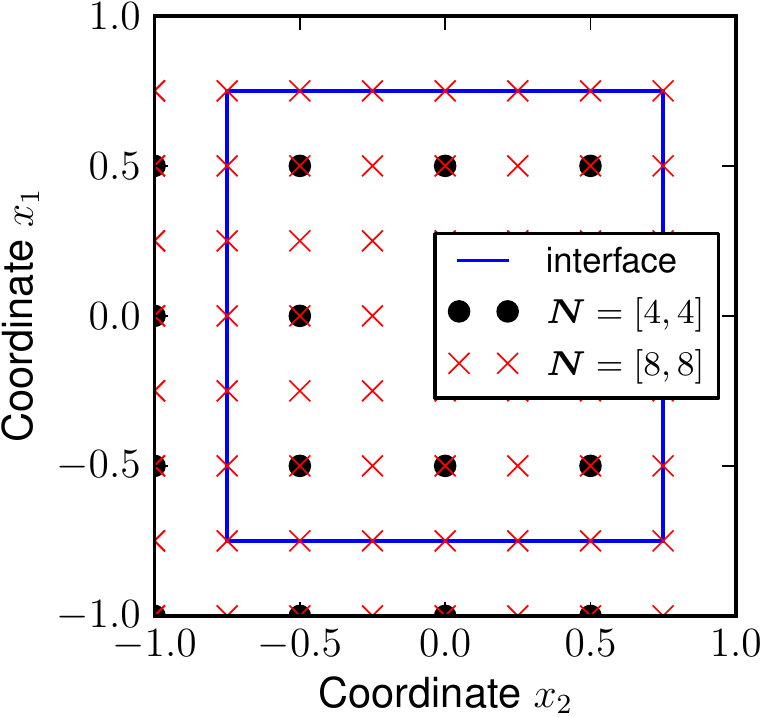}
\label{fig:square_S12}}
\caption{Cells with odd and even number of grid points}
\label{fig:square_SS12}
\end{figure}
\subsection{Homogenized matrices for odd discretization}
\label{sec:results_homogenized_coef_odd}
For odd grids \eqref{eq:N_odd}, the approximate homogenized matrices $\AeffN,\BeffN$ calculated from GaNi, recall \eqref{eq:GaNi}, are mutually inverse $\AeffN = \BeffN^{-1}$ as stated in Proposition~\ref{lem:FD_odd_problem}. 
The inequality $\oBeffN^{-1}\preceq\oAeffN$ of upper-lower bounds, stated in Lemma~\ref{lem:bounds}~(i), is satisfied  and the guaranteed error \eqref{eq:GaNi_error} converges to zero according to Lemma~\ref{lem:estimates} \rev{}{and Remark~\ref{rem:conv_eN2e}}. 
By the same arguments, the approximate homogenized matrices $\AeffN=\BeffN^{-1}$ from GaNi and the mean of guaranteed bounds $\ouAeffN$, \eqref{eq:GaNi_bounds_mean}, converge to $\Aeff$.
Since the inclusion shape is sampled with the grid points accurately, matrices $\AeffN=\BeffN^{-1}$ from GaNi approximate the homogenized properties better than the mean of guaranteed bounds $\ouAeffN$, especially for a small number of grid points.
\begin{figure}[htp]
\centering
\subfigure[\scriptsize Phase contrast $\rho = 10$]{
\includegraphics[scale=0.6]{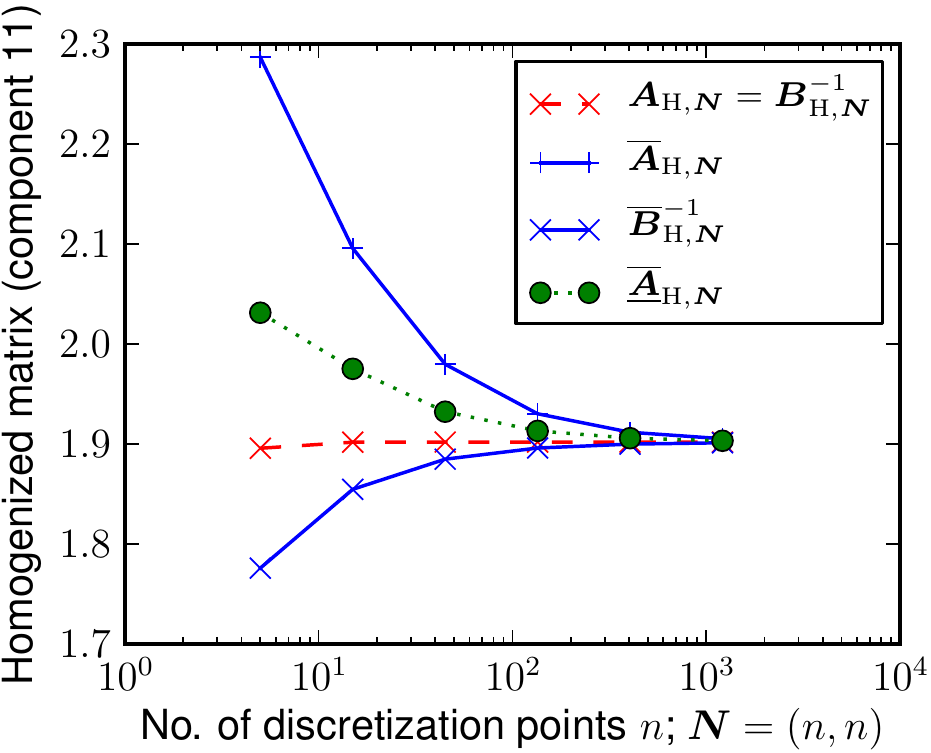}
}
\subfigure[\scriptsize Phase contrast $\rho = 10^3$]{
\includegraphics[scale=0.6]{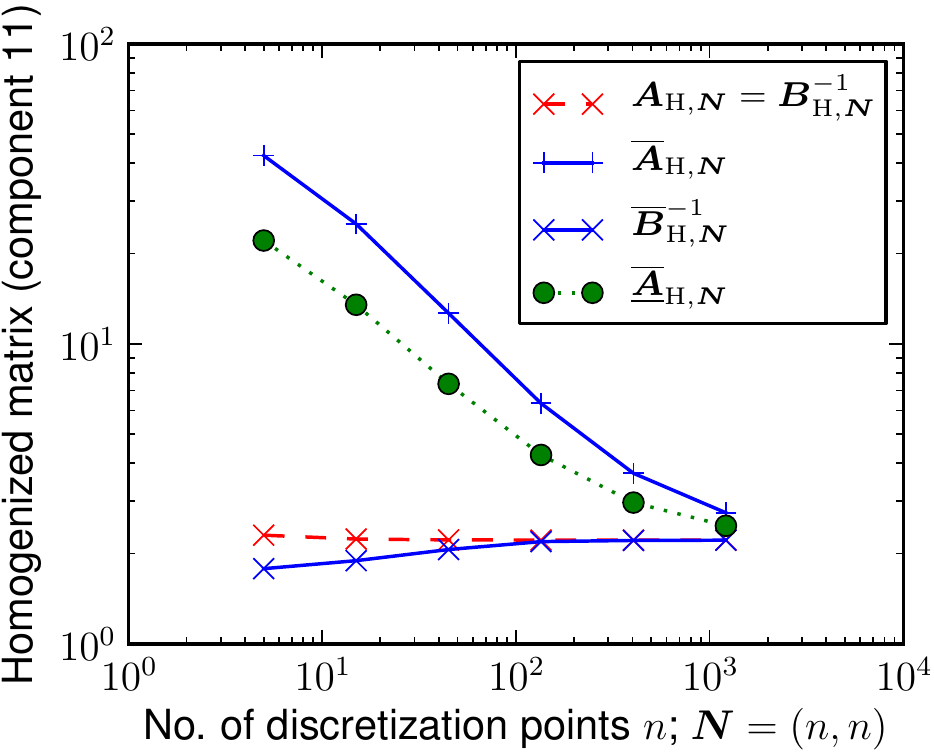}
}
\caption{Homogenized matrices for cell \eqref{eq:S} and odd grids}
\label{fig:odd_discretization}
\end{figure}

In Figure~\ref{fig:convergence}, we plot analogous results to Figure~\ref{fig:odd_discretization} for a refined sequence of grid points $\VN=(n,n)$ with $n\in\{5,7,9,\dotsc,145\}$. The results reveal that the convergence of guaranteed error \eqref{eq:GaNi_error}, Remark~\ref{rem:conv_eN2e}, is not monotone with an increasing number of grid points, despite the hierarchy of approximation spaces
\begin{align}\label{eq:two_grids}
\cEN \subseteq\cE_{\VM}\subset\cE\text{ and }\cJN \subseteq\cJ_{\VM}\subset\cJ\quad\text{for }N_\alp\leq M_\alp;
\end{align}
\rev{}{following from the fact that an increase in $\VN$ adds new basis functions into $\cTNd$, see~\eqref{eq:trig_space_Fourier}, similarly to the $p$-version of FEM.}
We attribute this behavior to the numerical
integration in approximate bilinear forms $\bilfN{}{}$ and $\bilfNi{}{}$
in~\eqref{eq:bilinear_forms_approx}\rev{}{, see the discussion in Remark~\ref{rem:gani_solvability}}, so that the solutions corresponding to two
discretizations $\V{N}$ and $\V{M}$ from~\eqref{eq:two_grids} are determined for
different sampling of material coefficients $\V{A}$. This ``variational
crime''~\cite{Strang1972varcrime} \rev{}{due to the inconsistency error \eqref{eq:consist_error}} results in the non-monotonous convergence of
the approximate solutions; their convergence is nevertheless assured
by~\cite[Proposition~8]{VoZeMa2014FFTH}. Moreover, no oscillations have been
observed for the Galerkin method without numerical
integration~\cite{Vondrejc2015FFTimproved}.
\begin{figure}[htp]
\centering
\subfigure[\scriptsize Phase contrast $\rho = 10$]{
\includegraphics[scale=0.6]{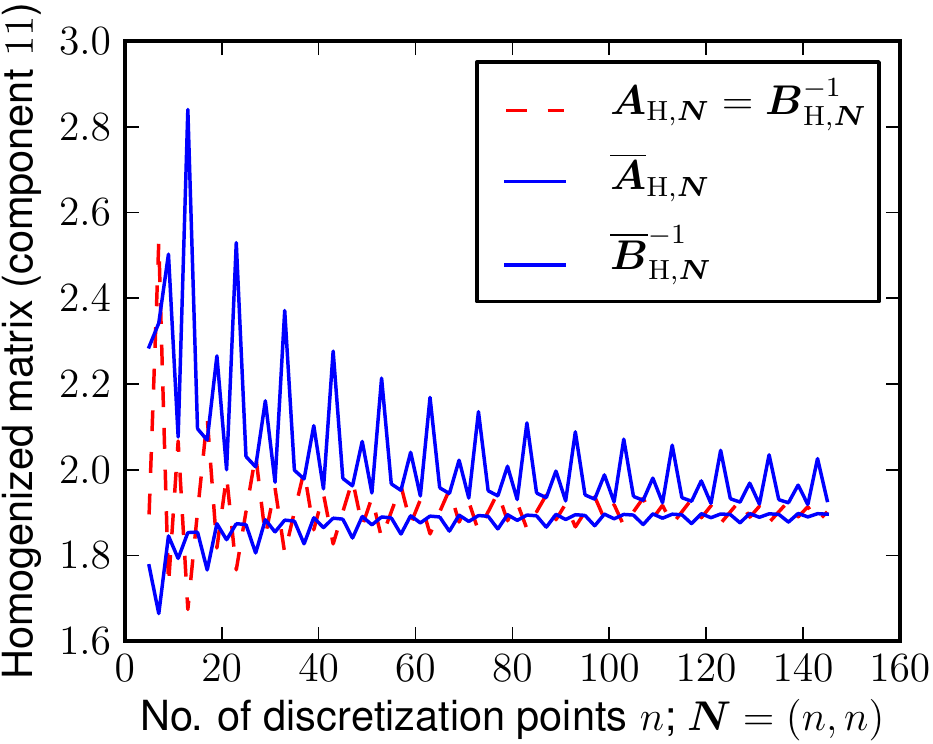}
}
\subfigure[\scriptsize Phase contrast $\rho = 10^3$]{
\includegraphics[scale=0.6]{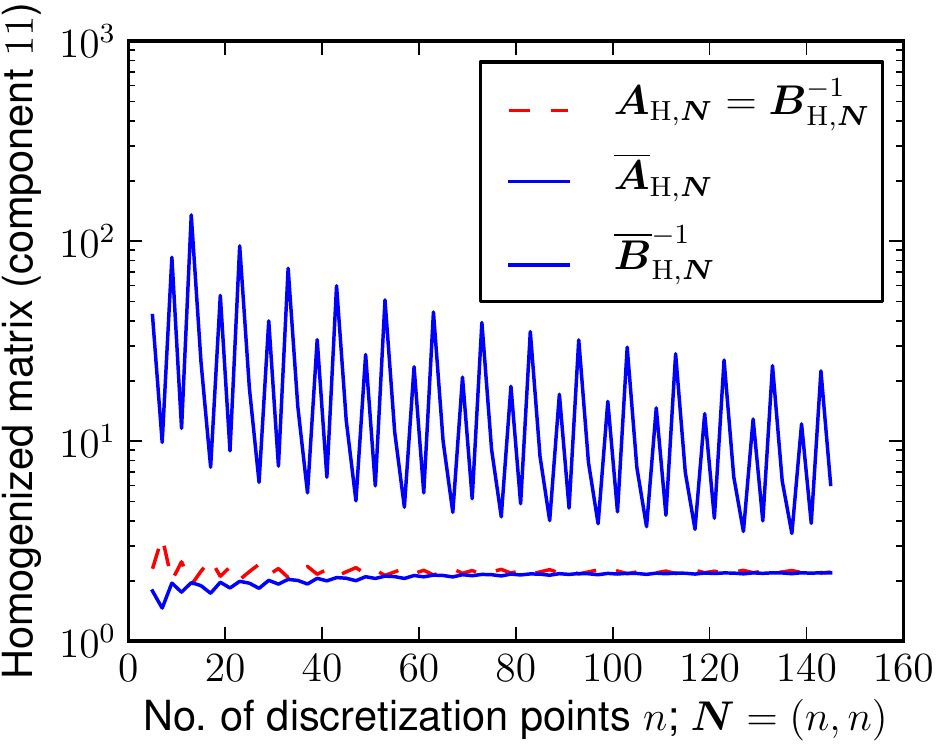}
}
\caption{Homogenized matrices for topology \eqref{eq:S} and a refined sequence of odd grids}
\label{fig:convergence}
\end{figure}

\subsection{Homogenized matrices for even discretization}
\label{sec:results_homogenized_coef_even}
\rev{}{Before presenting the results for even discretizations with two phase contrasts $\rho\in\{10,10^3\}$, we first clarify the reason for using the two inclusion topologies~\eqref{eq:S1} and~\eqref{eq:S2} that --- being different only at the matrix-inclusion interface --- are indistinguishable in homogenized properties and in the distribution of local fields. This no longer holds for the GaNi-based discretizations, once some of the grid points are located exactly at the interface, recall Figure~\ref{fig:square_SS12}(b). As a consequence of the trapezoidal integration rule, these points become associated with the coefficients of the matrix phase, \eqref{eq:S1}, or inclusion, \eqref{eq:S2}, rendering the two approximate solutions different. In addition, we expect these effects to be further amplified with the well-known Gibbs oscillations along the material interfaces, e.g.~\cite{Brisard2010FFT,willot2013fourier,willot_fourier-based_2015}, intrinsic to trigonometric polynomials-based approximations.}
\begin{figure}[htp]
\centering
\subfigure[\scriptsize Cell \eqref{eq:S1}]{
\includegraphics[scale=0.6]{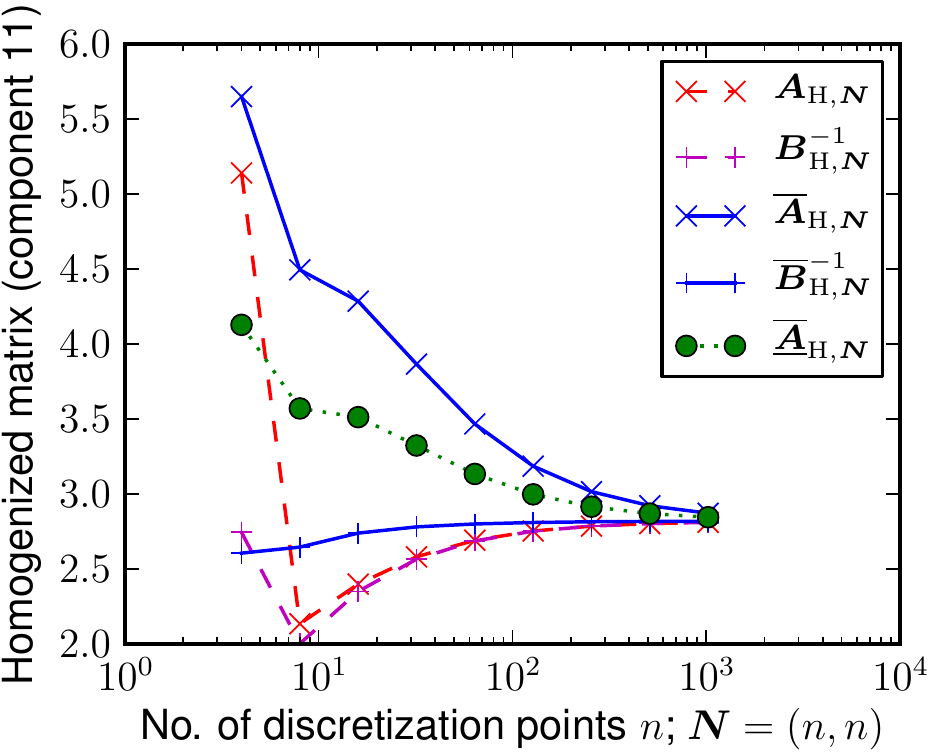}
}
\subfigure[\scriptsize Cell \eqref{eq:S2}]{
\includegraphics[scale=0.6]{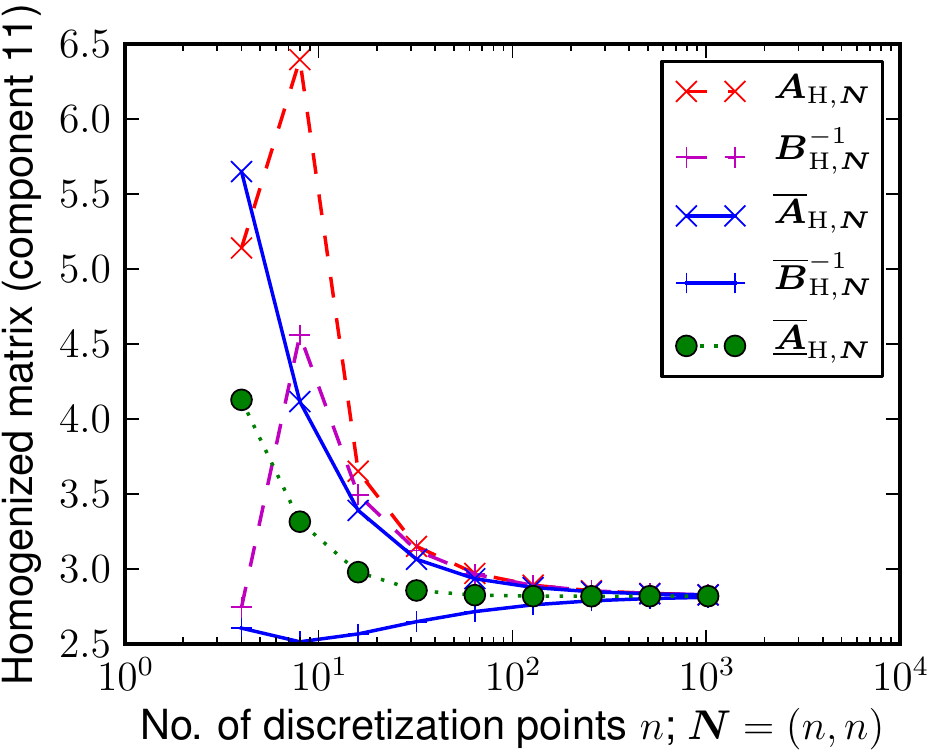}
}
\caption{Homogenized matrices for cells \eqref{eq:S1} and \eqref{eq:S2}, even grids, and phase contrast $\rho = 10$}
\label{fig:even_r10}
\end{figure}

In particular, Figures~\ref{fig:even_r10} and~\ref{fig:even_r1000} show that the
approximate homogenized matrices $\AeffN$ and $\BeffN^{-1}$ from GaNi are
different for even grids, nevertheless they still satisfy $\BeffN^{-1}\preceq\AeffN$, in agreement with Theorem~\ref{lem:FD_even_problem}.
Moreover, the duality
gap decreases as the effect of the Nyquist frequencies
diminishes with an increasing number of grid points,
and both matrices converge to the homogenized matrix $\Aeff$. The
same holds for the upper-lower bounds $\oAeffN$, $\oBeffN^{-1}$
and their mean $\ouAeffN$.
\begin{figure}[htp]
\centering
\subfigure[\scriptsize Cell \eqref{eq:S1}]
{
\includegraphics[scale=0.6]{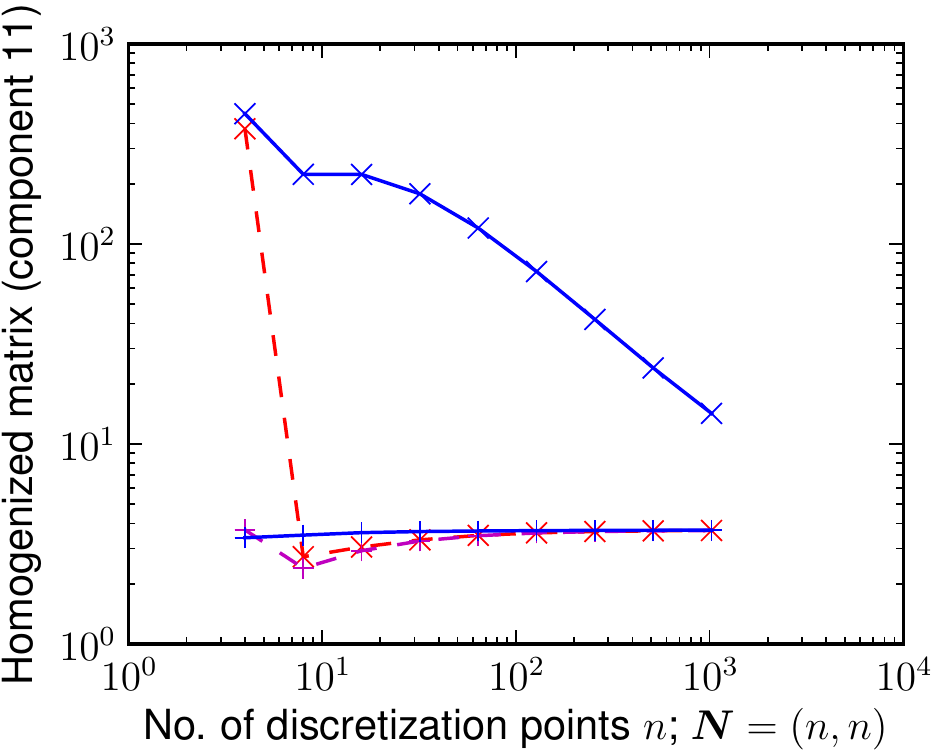}
}
\subfigure[\scriptsize Cell \eqref{eq:S2}]{
\includegraphics[scale=0.6]{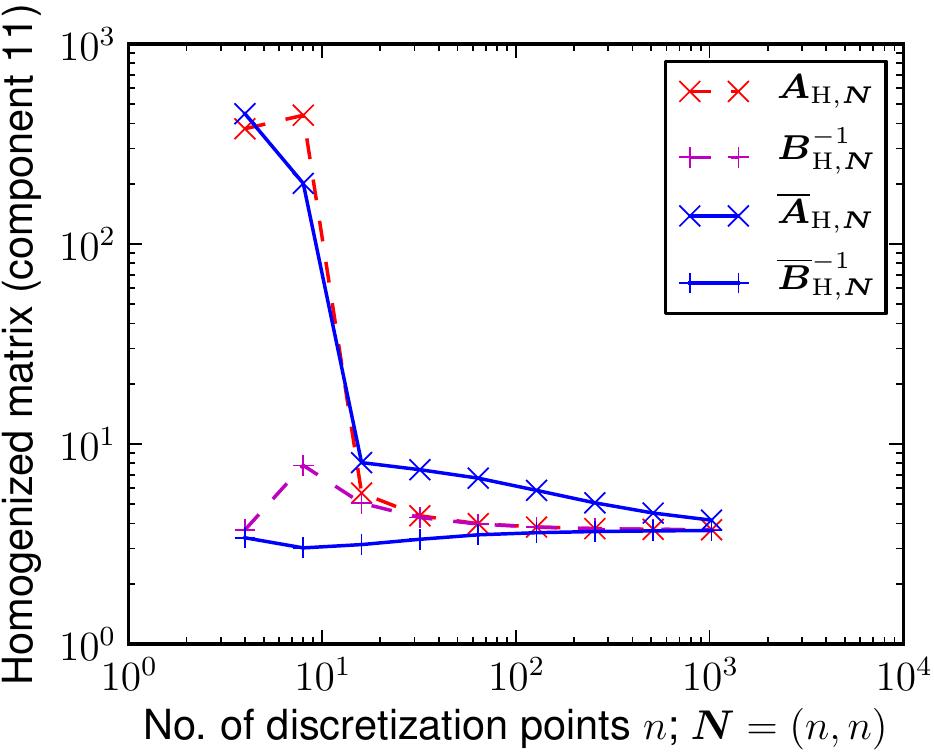}
}
\caption{Homogenized matrices for cells \eqref{eq:S1} and \eqref{eq:S2}, even grids, and phase contrast $\rho = 10^3$}
\label{fig:even_r1000}
\end{figure}

For both topologies \eqref{eq:S1} or \eqref{eq:S2}, the matrices $\AeffN$ and
$\BeffN^{-1}$ from GaNi may provide inaccurate
prediction of homogenized properties as they, in some cases, fall outside the
upper-lower bounds $\oAeffN$ and $\oBeffN^{-1}$. The mean of guaranteed bounds $\ouAeffN$, or one of the
upper-lower bounds $\oAeffN$ or $\oBeffN^{-1}$ if the worst case scenario
is needed, always provides admissible values.

Finally, in Figure~\ref{fig:even_comparison}, the upper-lower bounds $\oAeffN$
and $\oBeffN^{-1}$ are compared for both topologies \eqref{eq:S1} and
\eqref{eq:S2}, which differ only at the interface.
A significant difference is observed especially for the upper bound and the higher phase ratio $\rho=10^3$, \rev{}{which we attribute mainly to the Gibbs oscillations}.
\hide{bounds: 14.2121896338 4.15603873377 3.70469137375 3.68893890758}

\begin{figure}[htp]
\centering
\subfigure[\scriptsize Phase contrast $\rho = 10$]{
\includegraphics[scale=0.6]{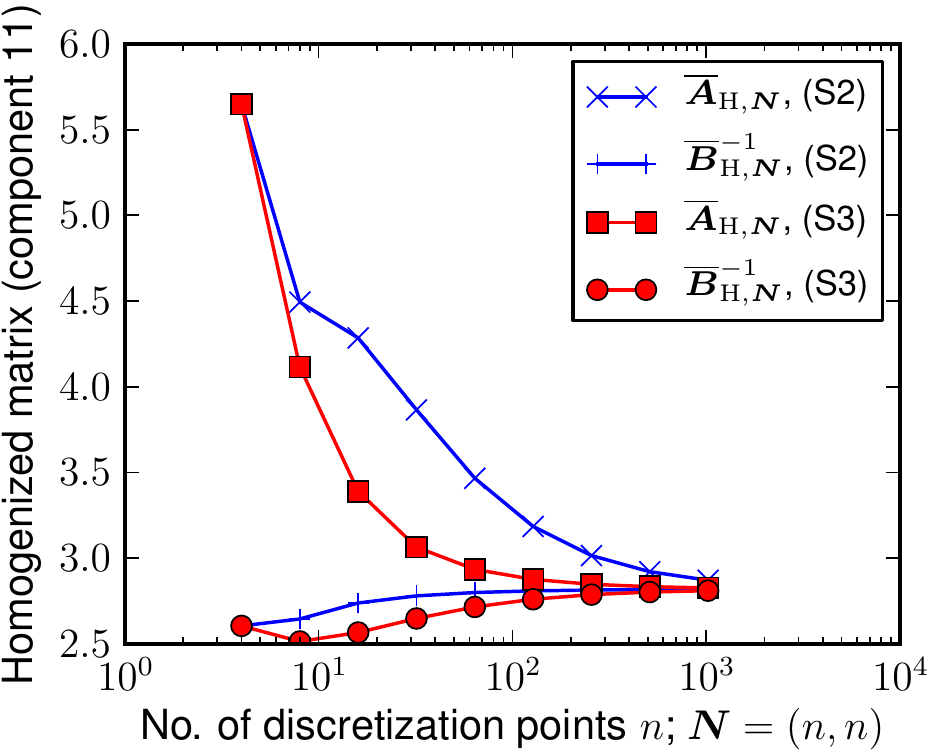}
}
\subfigure[\scriptsize Phase contrast $\rho = 10^3$]{
\includegraphics[scale=0.6]{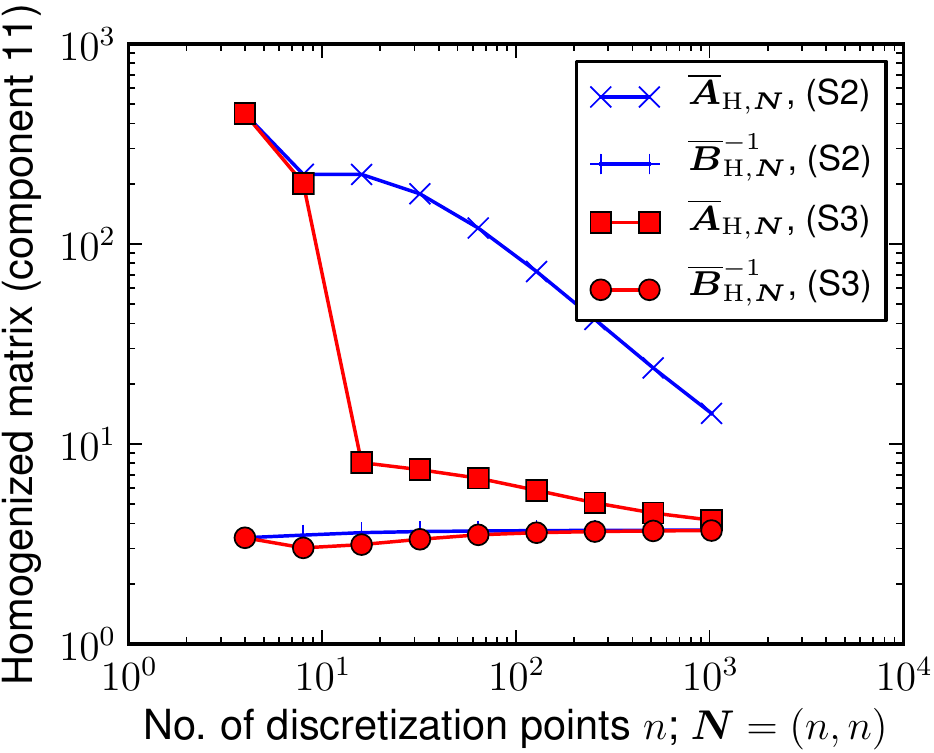}
}
\caption{Comparison of upper-lower bounds for cells \eqref{eq:S1} and \eqref{eq:S2}}
\label{fig:even_comparison}
\end{figure}
\subsection{Alkali-actived ash foam}
\label{sec:engineering_example}

We are concerned with the determination of effective thermal conductivity of an
alkali-activated ash foam, characterized with the $1,200 \times 1,200$ bitmap
shown in Figure~\ref{fig:topo_original}. The spatial distribution of the
material coefficients
\begin{align}
\label{eq:A_numerical_experiment}
\TA(\Vx) = 
\left[0.49 f(\Vx) + 0.029 \bigl(1 - f(\Vx)\bigr) \right]\mI
\quad\text{for }\Vx\in\puc
\end{align}
is defined with the help of the pixel-wise constant fly ash phase
characteristic function $f : \puc \rightarrow \{ 0, 1\}$ and the thermal
conductivities of fly ash ($0.49$~Wm$^{-2}$K$^{-1}$) and air in the
pores~($0.029$~Wm$^{-2}$K$^{-1}$)~\cite{Hlavacek2014flyash}. The
determination of all primal-dual homogenized matrices
\begin{subequations}
\label{eq:flyash_original}
\begin{align}
\label{eq:flyash_orig_gani}
\AeffN &= 
\begin{bmatrix}
0.1379997 & -0.0003841 \\
-0.0003841 & 0.1287957 \\
\end{bmatrix},
&
\BeffN^{-1} &= 
\begin{bmatrix}
0.1379880 & -0.0003840 \\
-0.0003840 & 0.1287849 \\
\end{bmatrix},
\\
\label{eq:flyash_orig_bounds}
\oAeffN &= 
\begin{bmatrix}
0.1409687 & -0.0004043 \\
-0.0004043 & 0.1319070 \\
\end{bmatrix},
&
\oBeffN^{-1} &= 
\begin{bmatrix}
0.1283959 & -0.0004628 \\
-0.0004628 & 0.1200422 \\
\end{bmatrix},
\\
\label{eq:flyash_orig_mean}
\ouAeffN &= 
\begin{bmatrix}
0.1346823 & -0.0004335 \\
-0.0004335 & 0.1259746 \\
\end{bmatrix},
&
\mD_{\VN} &= 
\begin{bmatrix}
0.0062864 & 0.0000292 \\
0.0000292 & 0.0059324 \\
\end{bmatrix},
\end{align}
\end{subequations}
involves
solutions of two linear systems with $2.88 \times 10^6$ unknowns and two right
hand sides and evaluation of lower-upper bounds by the double-grid quadrature, Section~\ref{sec:algorithm}, which took about fifteen minutes on a conventional
laptop with Intel\textcopyright Core\texttrademark i5-4200M CPU @ 2.5 GHz $\times$ 2 processor and $8$ GB of RAM. 

As in the previous section, the homogenized matrices
of the GaNi scheme \eqref{eq:flyash_orig_gani} slightly differ
because of the algebraic error due to iterative
solution of linear systems and the effect of the
Nyquist frequencies, but still satisfy
$\BeffN^{-1}\preceq\AeffN$ in agreement
with Proposition~\ref{lem:FD_even_problem}. The guaranteed error $\mD_{\VN}$,
however, remains rather large, which we attribute again to inaccuracy of
local fields in the vicinity of interfaces~\cite{willot2013fourier}.

\begin{figure}
\subfigure[Original]{\includegraphics[scale=.5]{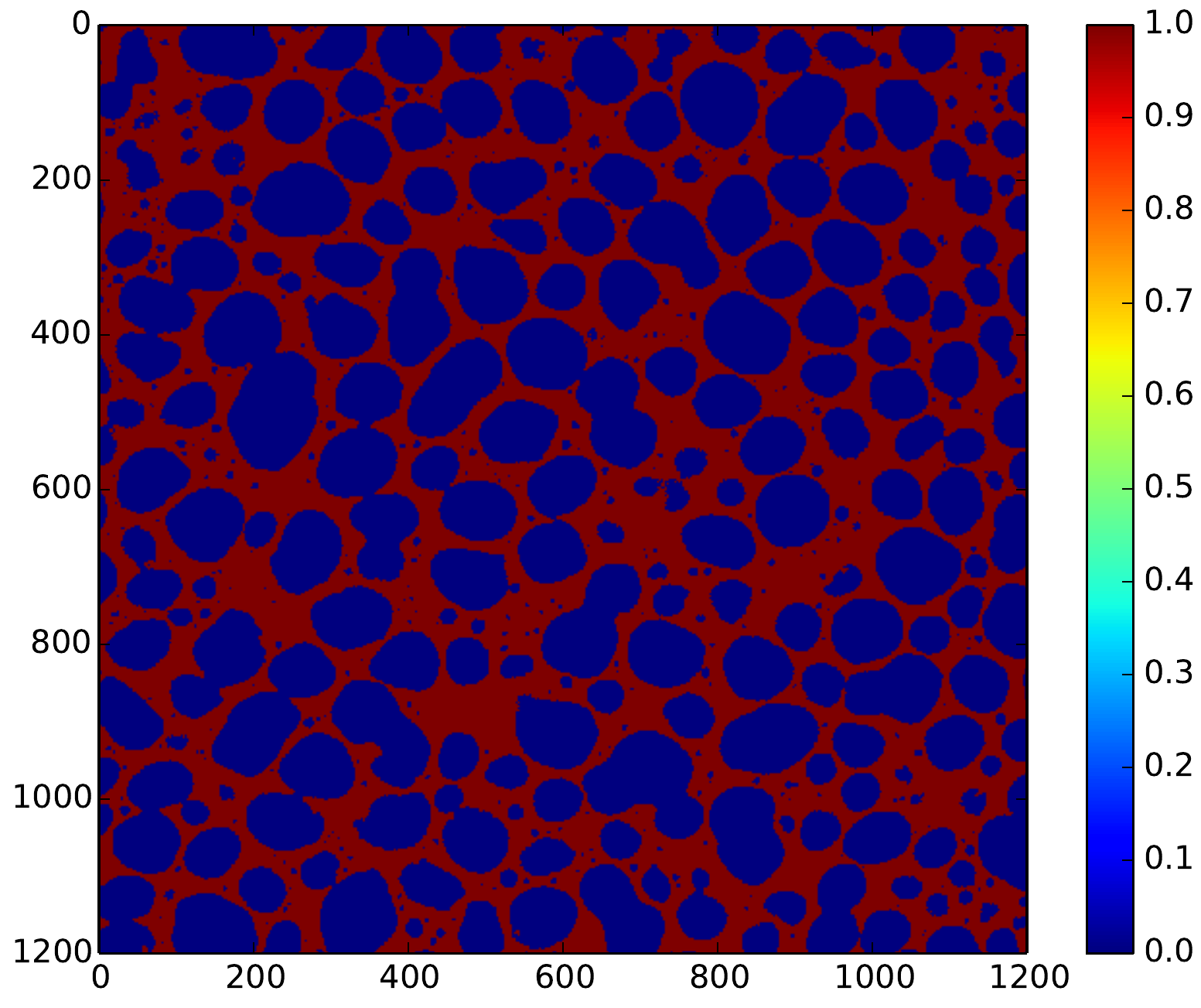}
\label{fig:topo_original}
}
\subfigure[Smoothed]{\includegraphics[scale=.5]{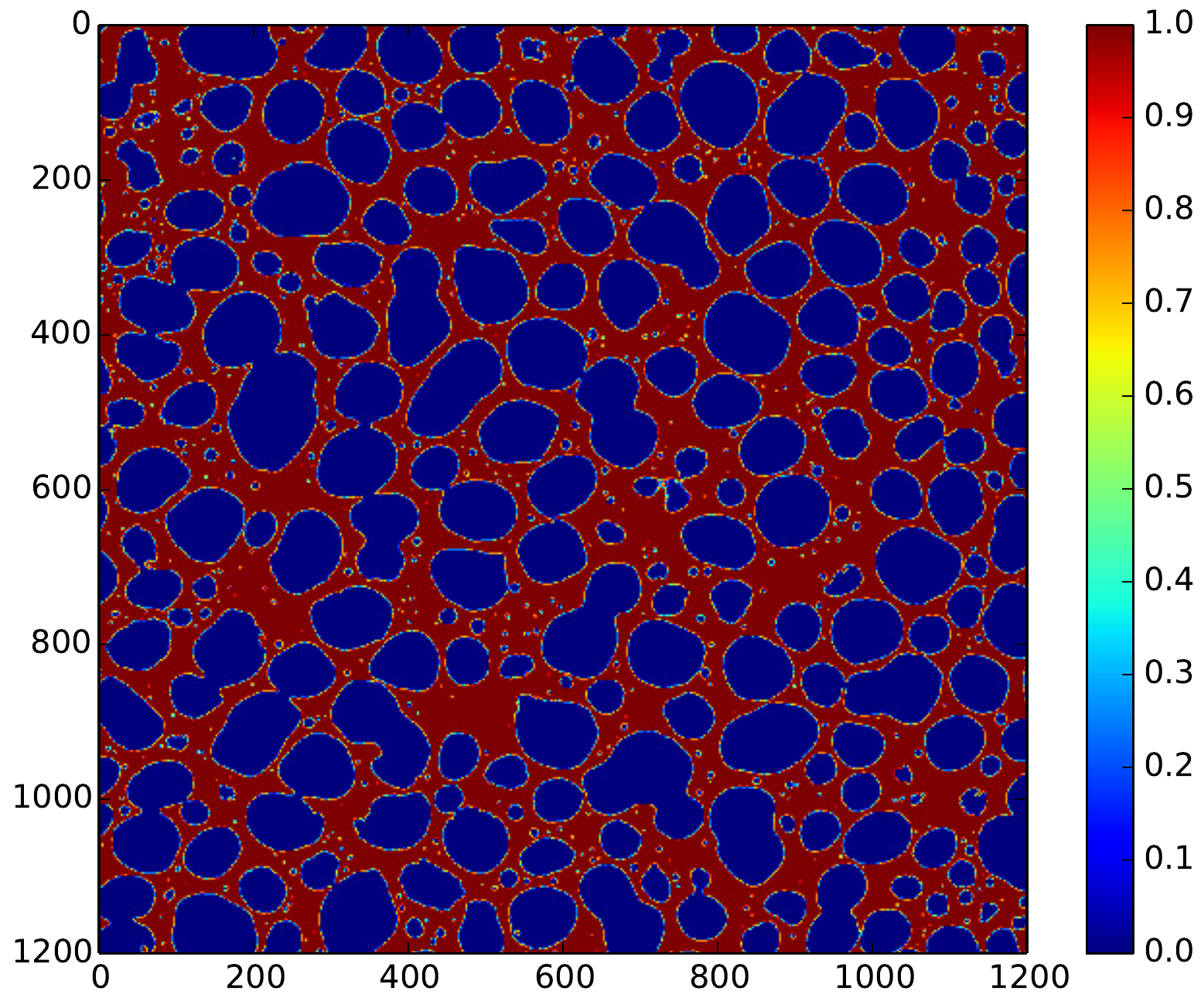}
\label{fig:topo_smooth}
}
\caption{Fly-ash
phase characteristic function. Courtesy of Petr Hlav\'{a}\v{c}ek, CTU in
Prague.}
\end{figure}

We have demonstrated in
\cite[pp.~142--145]{Vondrejc2013PhD} that
the solution accuracy can be substantially improved when smoothing the
coefficients\rev{}{, which also alleviates the Gibbs oscilations~\cite{gelebart_filtering_2015}}. For this purpose, we replace the grid values of the fly ash characteristic
function with a local average
\begin{align*}
f_\textrm{smoothed}(\Vx_\VN^\Vk) = & \frac{4}{16} f(\Vx_\VN^\Vk) + \frac{2}{16}
[f(\Vx_\VN^{\Vk+(1,0)})+f(\Vx_\VN^{\Vk-(1,0)})+f(\Vx_\VN^{\Vk+(0,1)})+f(\Vx_\VN^{\Vk-(0,1)})]
\\
+ & \frac{1}{16}
[f(\Vx_\VN^{\Vk+(1,1)})+f(\Vx_\VN^{\Vk-(1,1)})+f(\Vx_\VN^{\Vk+(-1,1)})+f(\Vx_\VN^{\Vk+(1,-1)})],
\end{align*}
which keeps the data almost unchanged, see
Figure~\ref{fig:topo_smooth}. The corresponding homogenized
properties then read as
\begin{subequations}
\label{eq:flyash_smooth}
\begin{align}
\label{eq:flyash_smooth_gani}
\AeffN &= 
\begin{bmatrix}
0.1418904 & -0.0004085 \\
-0.0004085 & 0.1328166 \\
\end{bmatrix},
&
\BeffN^{-1} &= 
\begin{bmatrix}
0.1418902 & -0.0004085 \\
-0.0004085 & 0.1328162 \\
\end{bmatrix},
\\
\label{eq:flyash_smooth_bounds}
\ol{\TA}_{\eff,\VN} &= 
\begin{bmatrix}
0.1422750 & -0.0004159 \\
-0.0004159 & 0.1332668 \\
\end{bmatrix},
&
\oBeffN^{-1} &= 
\begin{bmatrix}
0.1408147 & -0.0004152 \\
-0.0004152 & 0.1318124 \\
\end{bmatrix},
\\
\label{eq:flyash_smooth_mean}
\ouAeffN &= 
\begin{bmatrix}
0.1415448 & -0.0004155 \\
-0.0004155 & 0.1325396 \\
\end{bmatrix},
&
\mD_{\VN} &= 
\begin{bmatrix}
0.0007302 & -0.0000004 \\
-0.0000004 & 0.0007272 \\
\end{bmatrix}.
\end{align}
\end{subequations}

Notice that, as a result of smoothing, the error \eqref{eq:flyash_smooth_mean} decreases by an order of magnitude~(even more accurate results can be obtained for the Galerkin method with exact integration~\cite{Vondrejc2015FFTimproved}), while the eigenvalues of the new homogenized matrices \eqref{eq:flyash_smooth_gani} and \eqref{eq:flyash_smooth_bounds} increase. This behavior occurs because we decided to smooth the primal coefficients $\TA$; the extreme case would correspond to the Voigt bound where the coefficients are replaced with the mean value $\mean{\TA}$. By analogy, smoothing of the dual coefficients $\TA^{-1}$ decreases the homogenized properties in the direction of the Reuss bound $\mean{\TA^{-1}}^{-1}$. 

\section{Conclusion}
\label{sec:conclusion}
We have presented a method for the reliable determination of homogenized matrices
arising from the cell problem \eqref{eq:homog_problem} discretized
with the Galerkin approximation with numerical integration (GaNi),
introduced recently in \cite{VoZeMa2014FFTH} by the authors for uniform grids with an odd number of
points. The method employs trigonometric polynomials as the
approximation space and delivers conforming minimizers that are used
to evaluate guaranteed upper-lower bounds on the homogenized matrix. Our most important findings are summarized as follows:
\begin{itemize}
\item A generalization of GaNi for a non-odd number of grid points is
provided as a method for delivering conforming approximations of minimizers.
\item Primal and dual formulations are investigated in discretized and
fully discrete forms. Interestingly, duality is completely preserved for an
odd number of grid points. For non-odd discretization, the structure is
violated due to Nyquist frequencies. Our advice is to use odd grids whenever possible.
\item The idea of upper-lower bounds on a homogenized properties, independently proposed by Dvo\v{r}\'{a}k \cite{Dvorak1993master,Dvorak1995RNM} and Wi\c{e}ckowski \cite{Wieckowski1995DFEM} for the Finite Element Method (FEM), 
has been successfully applied within the framework of FFT-Galerkin methods. Moreover, thanks to convergence result in
\cite[Proposition~8]{VoZeMa2014FFTH}, these bounds can be made arbitrarily
accurate. Unlike the FEM, it results in primal and dual problems with the same structure. Therefore, our developments can be easily generalized beyond the scalar elliptic problems considered in this work, as done recently by Monchiet~\cite{Monchiet2015} for the case of linear elasticity. 
\item Our theoretical findings are confirmed by
numerical examples in Section~\ref{sec:numerical_experiments} for both odd and even discretization as well as by analysis of a real-world material system.
\end{itemize}

\appendix
\section{Primal-dual formulations}
\label{sec:appendix_duality}
This appendix is dedicated to the proof of
Proposition~\ref{lem:transform2dual} summarizing duality
arguments for both continuous \eqref{eq:homog_problem} and discrete
homogenization problems \eqref{eq:FD_GaNi} and \eqref{eq:FD_GaNi_tilde}.
Although several related results are
available in the literature,
e.g.~\cite{suquet1982dual,Jikov1994HDOIF,cherkaev2000variational}, we have failed to find them in a form compatible with our homogenization setting. Our expositions combine Dvořák's results \cite{Dvorak1993master,Dvorak1995RNM} with Ekeland and Temam's general
duality theory~\cite[page~46--51]{ekeland1976convex}.
In particular, the following lemma adjusts the arguments of \cite[Proposition~2.1 and Remark 2.3]{ekeland1976convex} to the current framework, connects the primal and the dual formulations, and provides a way to prove Proposition~\ref{lem:transform2dual}.
\begin{lemma}[Perturbation duality theorem]
\label{lem:perturbation_duality}
Consider Hilbert spaces $\rcE$ and $\cH$ such
that $\rcE\subset\cH$, and let $F:\cH\rightarrow\xR$ be a convex
functional. With $\varPhi:\cH\times \cH\rightarrow \xR$ and
$\varPhi^*:\rcE\times \cH\rightarrow\xR$ we denote a perturbed
functional with its Fenchel's conjugate, i.e.
\begin{align}\label{eq:perturbed_fenchel_conjugate}
\varPhi(\Vu,\Vv) &= F(\Vu+\Vv).
&
\varPhi^*(\Vu^*, \Vv^* ) &= \max_{\Vu\in \rcE,\Vv\in
\cH}\left[\scal{\Vu^*}{\Vu}_{\rcE} + \scal{\Vv^*}{\Vv}_{\cH} -
\varPhi(\Vu,\Vv) \right].
\end{align}
Then, the extremal values of the primal problem
\begin{align*}
\min_{\rVe\in \rcE}F(\rVe) = \min_{\rVe\in \rcE}\varPhi(\rVe, 0)
\end{align*}
and the dual problem 
\begin{align*}
\max_{\Vv^*\in \cH} -\varPhi^*(0; \Vv^* )
\end{align*}
coincide, i.e.
\begin{align*}
\min_{\rVe\in \rcE}\varPhi(\rVe, 0) = \max_{\Vv^*\in \cH} -\varPhi^*(0; \Vv^*
).
\end{align*}
\end{lemma}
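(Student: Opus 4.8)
The plan is to follow the classical convex-duality argument of Ekeland and Temam \cite{ekeland1976convex}, organized around the value function of the primal problem.

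\textbf{Weak duality.} First I would record the ``easy'' inequality. Choosing $\Vu=\rVe\in\rcE$ and $\Vv=0$ as a particular competitor in the maximization defining $\varPhi^*$ in \eqref{eq:perturbed_fenchel_conjugate} gives $\varPhi^*(0;\Vv^*)\geq-\varPhi(\rVe,0)=-F(\rVe)$ for every $\rVe\in\rcE$ and every $\Vv^*\in\cH$; hence $-\varPhi^*(0;\Vv^*)\leq F(\rVe)$, and taking the infimum over $\rVe$ and the supremum over $\Vv^*$ yields $\sup_{\Vv^*\in\cH}\bigl(-\varPhi^*(0;\Vv^*)\bigr)\leq\min_{\rVe\in\rcE}\varPhi(\rVe,0)$.

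\textbf{Value function and biconjugate.} Next I would introduce the value function $h:\cH\rightarrow\xR\cup\{\pm\infty\}$, $h(\Vv):=\inf_{\rVe\in\rcE}\varPhi(\rVe,\Vv)=\inf_{\rVe\in\rcE}F(\rVe+\Vv)$, which is convex because $(\rVe,\Vv)\mapsto F(\rVe+\Vv)$ is jointly convex (a linear map composed with the convex $F$) and the infimum of a jointly convex function over one of its variables is convex in the other. Performing the supremum over $\Vu\in\rcE$ first in \eqref{eq:perturbed_fenchel_conjugate} identifies $\varPhi^*(0;\Vv^*)=\sup_{\Vv\in\cH}\bigl[\scal{\Vv^*}{\Vv}_{\cH}-h(\Vv)\bigr]=h^*(\Vv^*)$, so the dual optimal value equals $\sup_{\Vv^*\in\cH}\bigl(-h^*(\Vv^*)\bigr)=h^{**}(0)$, the Fenchel biconjugate of $h$ evaluated at the origin, while the primal optimal value is exactly $h(0)$. (Unwinding $h^*$ further shows the dual is effectively a supremum of $-F^*$ over the annihilator $\rcE^{\perp}$ in $\cH$, which is the form relevant for the inverse relation $\rTA_\eff=\rTB_\eff^{-1}$.)

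\textbf{Strong duality.} It then remains to prove $h(0)=h^{**}(0)$ together with attainment of both extrema. Since $h^{**}\leq h$ always, the point is that $h$ be lower semicontinuous at $0$ with $\partial h(0)\neq\emptyset$. This is where the structure of $F$ enters: in every instance in which the lemma is applied --- the continuous homogenization problem \eqref{eq:homog_problem} and its fully discrete relatives \eqref{eq:FD_GaNi} and \eqref{eq:FD_GaNi_tilde} --- $F$ is a continuous, coercive quadratic functional on $\cH$, so that $h$ is finite and continuous in a neighborhood of $0$; continuity of a convex function forces $\partial h(0)\neq\emptyset$ and $h(0)=h^{**}(0)$, and any $\Vv^*\in\partial h(0)$ realizes the maximum in $h^{**}(0)=\sup_{\Vv^*}\bigl(-h^*(\Vv^*)\bigr)$. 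Attainment of $\min_{\rVe\in\rcE}F(\rVe)$ follows from coercivity and weak lower semicontinuity of $F$ and reflexivity of $\cH$ (equivalently, directly from the Lax--Milgram lemma in the quadratic case). I expect the main obstacle to be precisely this last step: the bare hypothesis ``$F$ convex'' does not by itself rule out a duality gap, and the cleanest remedy is to isolate the mild extra regularity of $F$ (continuity plus coercivity, hence a locally bounded value function) that is present in all the applications and that lets the Ekeland--Temam machinery apply verbatim.
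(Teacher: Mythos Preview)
Your proposal is correct and, in fact, more detailed than what the paper provides: the paper does not supply its own proof of this lemma at all, but simply attributes it to \cite[Proposition~2.1 and Remark~2.3]{ekeland1976convex} and then invokes it as a black box in the proof of Proposition~\ref{lem:transform2dual}. Your sketch via the value function $h(\Vv)=\inf_{\rVe\in\rcE}\varPhi(\rVe,\Vv)$ and the identification of the dual value with $h^{**}(0)$ is precisely the Ekeland--Temam argument the paper cites, so there is no methodological difference to discuss.

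Your closing observation is well taken and worth keeping: the bare hypothesis ``$F$ convex'' in the lemma as stated is indeed insufficient to guarantee $h(0)=h^{**}(0)$ and attainment of both extrema; one needs, for instance, continuity of $h$ at $0$ (so that $\partial h(0)\neq\emptyset$), which in the paper's applications follows from $F$ being a continuous coercive quadratic form. The paper glosses over this point, relying implicitly on the fact that the Ekeland--Temam result it cites already carries the appropriate qualification.
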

\begin{proof}[Proof of Proposition~\ref{lem:transform2dual}]
In order to utilize Lemma~\ref{lem:perturbation_duality}, we define a functional $F:\cH\rightarrow\xR$ and its
perturbation $\varPhi:\cH \times \cH\rightarrow\xR$ for a
given $\VE\in\xRd$ as
\begin{align*}
F(\rVe) &= \frac{1}{2}\bilfG{\VE+\rVe}{\VE+\rVe}
&
\varPhi(\rVe,\Vv) &= \frac{1}{2} 
\bilfG{\VE+\rVe+\Vv}{\VE+\rVe+\Vv}.
\end{align*}
Because the linear operator $\rTA$ is coercive, \rev{}{the  functional 
$F$} is convex and Lemma~\ref{lem:perturbation_duality} can be
employed. The primal formulation is then equivalent to the dual formulation
\begin{align}
\label{eq:duality_homog_identity}
 \scal{\rTA_\eff \VE}{\VE}_{\xRd} = \min_{\rVe\in \rcE} 2F(\rVe) = 2 \min_{\rVe\in \rcE} \varPhi(\rVe,\MB{0}) = 2 \max_{\Vv^*\in \cH} -\varPhi^*(\MB{0},\Vv^*),
\end{align}
where $\MB{0}\in\rcE\subset\cH$ is the zero vector and
$\varPhi^*:\rcE\times\cH\rightarrow\xR$ is the Fenchel conjugate function according to~\eqref{eq:perturbed_fenchel_conjugate}$_2$.

Now, we investigate \eqref{eq:duality_homog_identity} to retrieve the dual formulation \eqref{eq:general_homog_dual}. Using substitution
 $\Vv' = \VE+\rVe+\Vv$
where $\Vv'$ covers the whole space $\cH$, we deduce
\begin{align*}
\scal{\rTA_\eff \VE}{\VE}_{\xRd} &= 2 \max_{\Vv^*\in \cH} -\varPhi^*(\MB{0},\Vv^*) 
= 2 \max_{\Vv^*\in \cH} \left[ - \max_{\substack{\rVe\in \rcE\\\Vv'\in \cH}}  \left( \scal{\Vv^*}{\Vv'-\VE-\rVe}_{\cH} - \frac{1}{2}\scal{\rTA \Vv'}{\Vv'}_{\cH} \right) \right]
\\
&= 2 \max_{\Vv^*\in \cH} \left[ \scal{\Vv^*}{\VE}_{\cH} + \min_{\rVe\in \rcE} \scal{\Vv^*}{\rVe}_{\cH}\right.
\left. -\max_{\Vv'\in \cH}\left( \scal{\Vv^*}{\Vv'}_{\cH} 
 - \frac{1}{2}\scal{\rTA \Vv'}{\Vv'}_{\cH} \right)  \right].
\end{align*}
We focus on the maximizer $\tilde{\Vv}'$ of the last equation, which satisfies
 $\rTA \tilde{\Vv}' = \Vv^*$.
\rev{}{By the assumptions of} Proposition~\ref{lem:transform2dual}, the operator $\rTA$ is
invertible; hence, we obtain $\tilde{\Vv}' = \rTA^{-1}\Vv^*$, and 
the inner $\max$-term simplifies to $$\max_{\Vv'\in \cH}\left( \scal{\Vv^*}{\Vv'}_{\cH} - \frac{1}{2}\scal{\rTA \Vv'}{\Vv'}_{\cH} \right) = \frac{1}{2}\scal{\rTA^{-1} \Vv^*}{\Vv^*}_{\cH}.$$
The inner $\min$-term equals to the negative value of the indicator function of $\rcU\oplus\rcJ$ as
$$\min_{\rVe\in \rcE} \scal{\Vv^*}{\rVe}_{\cH} =
\begin{cases}
0 &\text{for }\Vv^* \in \rcU\oplus\rcJ,
\\
-\infty &\text{otherwise}.
\end{cases}
$$
This term can be omitted when restricting optimization to $\rcU\oplus\rcJ$.
We proceed to
\begin{align}
\scal{\rTA_\eff \VE}{\VE}_{\xRd}
&= 2 \max_{\Vv^*\in \rcU\oplus\rcJ} \left[ \scal{\Vv^*}{\VE}_{\cH} - \frac{1}{2}\scal{\rTA^{-1} \Vv^*}{\Vv^*}_{\cH} \right]
\nonumber
\\
\label{eq:last_equation_0}
&= 2 \max_{\VJ\in \xRd} \left[ \scal{\VJ}{\VE}_{\xRd} - \min_{\rVj \in \rcJ} \frac{1}{2}\scal{\rTA^{-1} (\VJ + \rVj)}{\VJ + \rVj}_{\cH} \right],
\end{align}
where we have utilized the decomposition  $\Vv^* = \VJ+\rVj\in\rcU\oplus\rcJ$ with $\VJ\in\rcU$ and $\rVj\in\rcJ$.

The $\min$-term in the last equation \eqref{eq:last_equation_0} already matches the dual formulation \eqref{eq:general_homog_dual},
namely
\begin{align*}
 \scal{\rTB_{\eff} \VJ}{\VJ }_{\xRd} = \min_{\rVj\in \rcJ}  \scal{\rTA^{-1} (\VJ + \rVj)}{\VJ + \rVj}_{\cH}.
\end{align*}
Now, we will show that $\rTA_{\eff}=\rTB_{\eff}^{-1}$ as claimed in~\eqref{eq:duality_mutually_inverse_property}. Notice first that the matrix $\rTB_{\eff}$ is invertible as it is symmetric and coercive because the linear operator $\rTA^{-1}$ is symmetric and coercive (see Remark~\ref{rem:Aeff_spd} for similar arguments).
Next, the dual formulation \eqref{eq:last_equation_0} simplifies to
\begin{align}\label{eq:dual_pom}
\scal{\rTA_\eff \VE}{\VE}_{\xRd} 
&= \max_{\VJ\in \xRd} \left[ 2\scal{\VJ}{\VE}_{\xRd} -  \scal{\rTB_{\eff} \VJ}{\VJ}_{\xRd} \right]
\end{align}
and the maximum is attained for $\VJ=\rTB_\eff^{-1}\VE$ that, when substituted back to \eqref{eq:dual_pom}, provides the desired identity \eqref{eq:duality_mutually_inverse_property}.

The relation between minimizers in 
\eqref{eq:connection_primal_dual_fields} using the identity between homogenized matrices \eqref{eq:duality_mutually_inverse_property} must still be proven.
Indeed, the stationarity condition for the primal formulation
\eqref{eq:general_homog_primal}, i.e.
$\bilfG{\VE+\rVe\mac{\VE}}{\Vv} = 0$
for all $\Vv\in\rcE$,
reveals
\begin{align}\label{eq:pom_dual_effective}
 \rTA(\VE+\rVe\mac{\VE})\in\rcU\oplus\rcJ
\qquad\text{and}\qquad
 \scal{\rTA_\eff\VE}{\VE}_{\xRd} &= \bilfG{\VE+\rVe\mac{\VE}}{\VE+\rVe\mac{\VE}} = \bilfG{\VE+\rVe\mac{\VE}}{\VE},
\end{align}
holding for arbitrary $\VE\in\xRd$. Combining both in  \eqref{eq:pom_dual_effective} and setting $\VJ=\mathring{\TA}_\eff\VE$, we obtain
$\rTA(\VE+\rVe\mac{\VE}) = \VJ + \breve{\Vj}\mac{\VJ}\quad\text{with }\breve{\Vj}\mac{\VJ}\in\rcJ$.

The relation \eqref{eq:connection_primal_dual_fields} will be
established once showing that
$\breve{\Vj}\mac{\VJ} = \rVj\mac{\VJ}$.
For $\VJ=\mathring{\TA}_\eff\VE$, the extremal values in \eqref{eq:general_homog} coincide, so that
\begin{align*}
\bilfGi{\VJ+\rVj\mac{\VJ}}{\VJ+\rVj\mac{\VJ}}
&= \bilfG{\VE+\rVe\mac{\VE}}{\VE+\rVe\mac{\VE}}
= \scal{\rTA(\VE+\rVe\mac{\VE})}{\VE+\rVe\mac{\VE}}_{\cH}
\\
&= \scal{\rTA^{-1}\rTA(\VE+\rVe\mac{\VE})}{\rTA(\VE+\rVe\mac{\VE})}_{\cH} 
= \bilfGi{\VJ+\breve{\Vj}\mac{\VJ}}{\VJ+\breve{\Vj}\mac{\VJ}},
\end{align*}
and hence 
$\breve{\Vj}\mac{\VJ} = \rVj\mac{\VJ}$ holds because $\rVe\mac{\VE}$ is
the unique minimizer.
\end{proof}

\section*{Acknowledgments}
The authors are thankful to Jaroslav Haslinger for bringing Jan Dvo\v{r}\'{a}k's works \cite{Dvorak1993master,Dvorak1995RNM} to our attention.
This work was supported by the Czech Science Foundation through project  No.~P105/12/0331. J.~Vond\v{r}ejc received the support from project EXLIZ -- CZ.1.07/2.3.00/30.0013, which is
co-financed by the European Social Fund and the state budget of the Czech Republic, and J.~Zeman from the European
Regional Development Fund under the IT4Innovations Center of Excellence, project No.~CZ.1.05/1.1.00/02.0070.


\begin{thebibliography}{10}
\expandafter\ifx\csname url\endcsname\relax
  \def\url#1{\texttt{#1}}\fi
\expandafter\ifx\csname urlprefix\endcsname\relax\def\urlprefix{URL }\fi
\expandafter\ifx\csname href\endcsname\relax
  \def\href#1#2{#2} \def\path#1{#1}\fi

\bibitem{OBBH2003research-directions}
J.~T. Oden, T.~Belytschko, I.~Babu\v{s}ka, T.~J.~R. Hughes, {Research
  directions in computational mechanics}, Computer Methods in Applied Mechanics
  and Engineering 192~(7) (2003) 913--922.

\bibitem{VoZeMa2014FFTH}
J.~Vondřejc, J.~Zeman, I.~Marek,
\href{http://dx.doi.org/10.1016/j.camwa.2014.05.014}{{An FFT-based Galerkin method for homogenization of periodic media}}, Computers
  \& Mathematics with Applications 68~(3) (2014) 156--173.

\bibitem{Moulinec1994FFT}
H.~Moulinec, P.~Suquet, {A fast numerical method for computing the linear and
  nonlinear mechanical properties of composites}, Comptes rendus de
  l'Acad\'{e}mie des sciences. S\'{e}rie II, M\'{e}canique, physique, chimie,
  astronomie 318~(11) (1994) 1417--1423.

\bibitem{Montagnat2014multiscale}
M.~Montagnat, O.~Castelnau, P.~D. Bons, S.~H. Faria, O.~Gagliardini,
  F.~Gillet-Chaulet, F.~Grennerat, A.~Griera, R.~A. Lebensohn, H.~Moulinec,
  J.~Roessiger, P.~Suquet, {Multiscale modeling of ice deformation behavior},
  Journal of Structural Geology 61 (2014) 78--108.

\bibitem{Sliseris2014}
J.~Sliseris, H.~Andr\"{a}, M.~Kabel, B.~Dix, B.~Plinke, O.~Wirjadi, G.~Frolovs,
  {Numerical prediction of the stiffness and strength of medium density
  fiberboards}, Mechanics of Materials 79 (2014) 73--84.

\bibitem{Stein2014fatigue}
C.~A. Stein, A.~Cerrone, T.~Ozturk, S.~Lee, P.~Kenesei, H.~Tucker, R.~Pokharel,
  J.~Lind, C.~Hefferan, R.~M. Suter, A.~R. Ingraffea, A.~D. Rollett, {Fatigue
  crack initiation, slip localization and twin boundaries in a nickel-based
  superalloy}, Current Opinion in Solid State and Materials Science 18~(4)
  (2014) 244--252.

\bibitem{Bensoussan1978per_structures}
G.~Papanicolau, A.~Bensoussan, J.~Lions, {Asymptotic analysis for periodic
  structures}, Vol.~5, North Holland, 1978.

\bibitem{nguetseng1989general}
G.~Nguetseng, {A general convergence result for a functional related to the
  theory of homogenization}, SIAM Journal on Mathematical Analysis 20~(3)
  (1989) 608--623.

\bibitem{allaire1992homogenization}
G.~Allaire, {Homogenization and two-scale convergence}, SIAM Journal on
  Mathematical Analysis 23~(6) (1992) 1482--1518.

\bibitem{cioranescu2008unfolding}
D.~Cioranescu, A.~Damlamian, G.~Griso,
  \href{http://epubs.siam.org/doi/abs/10.1137/080713148}{{The periodic
  unfolding method in homogenization}}, SIAM Journal on Mathematical Analysis
  40~(4) (2008) 1585--1620.

\bibitem{Flaherty1973}
J.~E. Flaherty, J.~B. Keller,
{Elastic behavior of
  composite media}, Communications on Pure and Applied Mathematics 26~(4)
  (1973) 565--580.

\bibitem{Panasenko1988}
G.~P. Panasenko, {Numerical solution of cell problems in averaging theory},
  \{USSR\} Computational Mathematics and Mathematical Physics 28~(1) (1988)
  183--186.

\bibitem{Garboczi:1998:FEFD}
E.~Garboczi, {Finite
  element and finite difference programs for computing the linear electric and
  elastic properties of digital images of random materials.}, Tech. Rep.
  NISTIR 6269, Building and Fire Research Laboratory, National Institute of
  Standards and Technology, Gaithesburg, Maryland 2089 (1998).

\bibitem{Guedes1990}
J.~M. Guedes, N.~Kikuchi,
  \href{http://www.sciencedirect.com/science/article/pii/004578259090148F}{{Preprocessing
  and postprocessing for materials based on the homogenization method with
  adaptive finite element methods}}, Computer Methods in Applied Mechanics and
  Engineering 83~(2) (1990) 143--198.

\bibitem{Michel1999}
J.-C.~C. Michel, H.~Moulinec, P.~Suquet,
  \href{http://www.sciencedirect.com/science/article/pii/S0045782598002278}{{Effective
  properties of composite materials with periodic microstructure: a
  computational approach}}, Computer Methods in Applied Mechanics and
  Engineering 172~(1--4) (1999) 109--143.

\bibitem{Geers2010}
M.~G.~D. Geers, V.~G. Kouznetsova, W.~A.~M. Brekelmans,
  \href{http://www.sciencedirect.com/science/article/pii/S0377042709005536}{{Multi-scale
  computational homogenization: Trends and challenges}}, Journal of
  Computational and Applied Mathematics 234~(7) (2010) 2175--2182.

\bibitem{Eischen1993BEM}
J.~Eischen, S.~Torquato,
  \href{http://scitation.aip.org/content/aip/journal/jap/74/1/10.1063/1.354132}{{Determining
  elastic behavior of composites by the boundary element method}}, Journal of
  applied physics 74~(1) (1993) 159--170.

\bibitem{Kaminski1999BEM}
M.~Kamiński,
  \href{http://www.sciencedirect.com/science/article/pii/S0955799799000296}{{Boundary
  element method homogenization of the periodic linear elastic fiber
  composites}}, Engineering Analysis with Boundary Elements 23~(10) (1999)
  815--823.

\bibitem{Prochazka2001BEM}
P.~Proch\'{a}zka, {Homogenization of linear and of debonding composites using
  the \{BEM\}}, Engineering Analysis with Boundary Elements 25~(9) (2001)
  753--769.

\bibitem{Greengard1998}
L.~Greengard, J.~Helsing,
  \href{http://www.sciencedirect.com/science/article/pii/S0022509697000410}{{On
  the numerical evaluation of elastostatic fields in locally isotropic
  two-dimensional composites}}, Journal of the Mechanics and Physics of Solids
  46~(8) (1998) 1441--1462.

\bibitem{Greengard2006}
L.~Greengard, J.~Lee, {Electrostatics and heat conduction in high contrast
  composite materials}, Journal of Computational Physics 211~(1) (2006) 64--76.

\bibitem{Helsing2011effective}
J.~Helsing, {The effective conductivity of arrays of squares: large random unit
  cells and extreme contrast ratios}, Journal of Computational Physics 230~(20)
  (2011) 7533--7547.

\bibitem{Brisard2010FFT}
S.~Brisard, L.~Dormieux, {FFT-based methods for the mechanics of composites: A
  general variational framework}, Computational Materials Science 49~(3) (2010)
  663--671.

\bibitem{Brisard2012FFT}
S.~Brisard, L.~Dormieux,
  \href{http://www.sciencedirect.com/science/article/pii/S0045782512000059}{{Combining
  Galerkin approximation techniques with the principle of Hashin and Shtrikman
  to derive a new FFT-based numerical method for the homogenization of
  composites}}, Computer Methods in Applied Mechanics and Engineering 217--220
  (2012) 197--212.

\bibitem{hashin1962elast}
Z.~Hashin, S.~Shtrikman,
  \href{http://www.sciencedirect.com/science/article/pii/0022509662900042}{{On
  some variational principles in anisotropic and nonhomogeneous elasticity}},
  Journal of the Mechanics and Physics of Solids 10~(4) (1962) 335--342.

\bibitem{Schneider2014convergence}
M.~Schneider, \href{http://doi.wiley.com/10.1002/mma.3259}{{Convergence of
  FFT-based homogenization for strongly heterogeneous media}}, Mathematical
  Methods in the Applied Sciences.

\bibitem{Eyre1999FNS}
D.~J. Eyre, G.~W. Milton, {A fast numerical scheme for computing the response
  of composites using grid refinement}, The European Physical Journal Applied
  Physics 6~(1) (1999) 41--47.

\bibitem{Vinogradov2008AFFT}
V.~Vinogradov, G.~W. Milton, {An accelerated FFT algorithm for thermoelastic
  and non-linear composites}, International Journal for Numerical Methods in
  Engineering 76~(11) (2008) 1678--1695.

\bibitem{ZeVoNoMa2010AFFTH}
J.~Zeman, J.~Vondřejc, J.~Nov\'{a}k, I.~Marek,
  \href{http://linkinghub.elsevier.com/retrieve/pii/S0021999110003931}{{Accelerating
  a FFT-based solver for numerical homogenization of periodic media by
  conjugate gradients}}, Journal of Computational Physics 229~(21) (2010)
  8065--8071.

\bibitem{Brown2002DFT}
C.~M. Brown, W.~Dreyer, W.~H. M\"{u}ller,
  \href{http://rspa.royalsocietypublishing.org/content/458/2024/1967.short}{{Discrete
  Fourier transforms and their application to stress—strain problems in
  composite mechanics: a convergence study}}, Proceedings of the Royal Society
  A: Mathematical, Physical and Engineering Sciences 458~(2024) (2002)
  1967--1987.

\bibitem{Michel2000CMB}
J.~Michel, H.~Moulinec, P.~Suquet, {A computational method based on augmented
  Lagrangians and fast Fourier transforms for composites with high contrast},
  CMES: Computer Modeling in Engineering \& Sciences 1~(2) (2000) 79--88.

\bibitem{Monchiet2012polarization}
V.~Monchiet, G.~Bonnet, {A polarization-based FFT iterative scheme for
  computing the effective properties of elastic composites with arbitrary
  contrast}, International Journal for Numerical Methods in Engineering 89~(11)
  (2012) 1419--1436.

\bibitem{Monchiet2013conduct}
V.~Monchiet, G.~Bonnet,
  \href{http://www.emeraldinsight.com/doi/abs/10.1108/HFF-10-2011-0207}{{A
  polarization-based fast numerical method for computing the effective
  conductivity of composites}}, International Journal of Numerical Methods for
  Heat \& Fluid Flow 23~(7) (2013) 1256--1271.

\bibitem{willot2013fourier}
F.~Willot, B.~Abdallah, Y.-P. Pellegrini,
  \href{http://onlinelibrary.wiley.com/doi/10.1002/nme.4641/full}{{Fourier-based
  schemes with modified Green operator for computing the electrical response of
  heterogeneous media with accurate local fields}}, International Journal for
  Numerical Methods in Engineering 98~(7) (2014) 518--533.

\bibitem{VoZeMa2012LNSC}
J.~Vondřejc, J.~Zeman, I.~Marek, {Analysis of a Fast Fourier Transform Based
  Method for Modeling of Heterogeneous Materials}, Lecture Notes in Computer
  Science 7116 (2012) 512--522.

\bibitem{voigt1910lehrbuch}
W.~Voigt, {Lehrbuch der kristallphysik}, Vol.~34, BG Teubner, 1910.

\bibitem{Reuss1929}
A.~Reuss, {Berechnung der Flie\ss grenze von Mischkristallen auf Grund der
  Plastizit\"{a}tsbedingung f\"{u}r Einkristalle}, ZAMM-Journal of Applied
  Mathematics and Mechanics/Zeitschrift f\"{u}r Angewandte Mathematik und
  Mechanik 9~(1) (1929) 49--58.

\bibitem{hashin1963variational}
Z.~Hashin, S.~Shtrikman,
  \href{http://www.sciencedirect.com/science/article/pii/0022509663900607}{{A
  variational approach to the theory of the elastic behaviour of multiphase
  materials}}, Journal of the Mechanics and Physics of Solids 11~(2) (1963)
  127--140.

\bibitem{Milton2002TC}
G.~W. Milton, {The Theory of Composites}, Vol.~6 of Cambridge Monographs on
  Applied and Computational Mathematics, Cambridge University Press, Cambridge,
  UK, 2002.

\bibitem{torquato2002random}
S.~Torquato, {Random heterogeneous materials: microstructure and macroscopic
  properties}, Vol.~16, Springer, 2002.

\bibitem{cherkaev2000variational}
A.~Cherkaev, {Variational methods for structural optimization}, Vol. 140,
  Springer, 2000.

\bibitem{dvorak2012micromechanics}
G.~J. Dvorak, {Micromechanics of Composite Materials}, Vol. 186, Springer,
  2012.

\bibitem{Dvorak1993master}
J.~Dvoř\'{a}k, {Optimization of Composite Materials}, Ph.D. thesis, Charles
  University (Jun. 1993).

\bibitem{Dvorak1995RNM}
J.~Dvoř\'{a}k,
  \href{http://citeseerx.ist.psu.edu/viewdoc/summary?doi=10.1.1.45.1190}{{A
  Reliable Numerical Method for Computing Homogenized Coefficients}}, Tech.
  Rep. 0045, Charles University in Prague, Faculty of Mathematics and Physics
  (1995).

\bibitem{haslinger1995optimum}
J.~Haslinger, J.~Dvoř\'{a}k, {Optimum composite material design},
  RAIRO-Mathematical Modelling and Numerical Analysis-Modelisation Mathematique
  et Analyse Numerique 29~(6) (1995) 657--686.

\bibitem{Wieckowski1995DFEM}
Z.~Wieckowski, {Dual Finite Element Methods in Mechanics of Composite
  Materials}, Journal of Theoretical and Applied Mechanics 2~(33) (1995)
  233--252.

\bibitem{kabel2012precisebounds}
M.~Kabel, H.~Andr\"{a},
  \href{http://math2market.de/Publications/2013ReportFraunhoferITWM\_Nr224.pdf}{{Fast
  numerical computation of precise bounds of effective elastic moduli}} (2012)
  1--16.

\bibitem{bignonnet2014fft}
F.~Bignonnet, L.~Dormieux,
  \href{http://onlinelibrary.wiley.com/doi/10.1002/nag.2278/full}{{FFT-based bounds on the permeability
  of complex microstructures}}, International Journal for Numerical and
  Analytical Methods in Geomechanics 38~(16) (2014) 1707--1723.

\bibitem{SaVa2000PIaPDE}
J.~Saranen, G.~Vainikko, {Periodic Integral and Pseudodifferential Equations
  with Numerical Approximation}, Springer Monographs Mathematics, Berlin,
  Heidelberg, 2002.

\bibitem{horn2012matrix}
R.~A. Horn, C.~R. Johnson, {Matrix analysis}, 2nd Edition, Cambridge University
  Press, 32 Avenue of the Americas, New York, NY 10013-2473, USA, 2013.

\bibitem{rudin1986real}
W.~Rudin, {Real and complex analysis}, 3rd Edition, McGraw-Hill, New York,
  1986.

\bibitem{Jikov1994HDOIF}
V.~V. Jikov, S.~M. Kozlov, O.~A. Oleinik, {Homogenization of Differential
  Operators and Integral Functionals}, Springer-Verlag, 1994.

\bibitem{Cioranescu1999Intro2Homog}
D.~Cioranescu, P.~Donato, {An Introduction to Homogenization}, Oxford Lecture
  Series in Mathematics and Its Applications, Oxford University Press, 1999.

\bibitem{Moulinec1998NMC}
H.~Moulinec, P.~Suquet,
  \href{http://linkinghub.elsevier.com/retrieve/pii/S0045782597002181}{{A
  numerical method for computing the overall response of nonlinear composites
  with complex microstructure}}, Computer Methods in Applied Mechanics and
  Engineering 157~(1--2) (1998) 69--94.

\bibitem{Burrus1985DFT/FFT}
C.~S. Burrus, T.~W. Parks, {DFT/FFT and Convolution Algorithms}, Wiley, New
  York, 1985.

\bibitem{Vondrejc2013PhD}
J.~Vondřejc,
  \href{http://mech.fsv.cvut.cz/wiki/images/4/49/PhD\_dissertation\_Vondrejc\_2013.pdf}{{FFT-Based
  Method for Homogenization of Periodic Media: Theory and Applications}}, Ph.D.
  thesis, Czech Technical University in Prague (Jan. 2013).

\bibitem{boyd_chebyshev_2001}
J.~P. Boyd, Chebyshev and {Fourier} Spectral Methods: {Second} Revised Edition,
  Courier Corporation, 2001.

\bibitem{Strang1972varcrime}
G.~Strang, {Variational crimes in the finite element method}, The mathematical
  foundations of the finite element method with applications to partial
  differential equations (1972) 689--710.

\bibitem{Necas2012direct}
J.~Ne\v{c}as, {Direct methods in the theory of elliptic equations}, Springer,
  2012.

\bibitem{Vondrejc2015FFTimproved}
J.~Vondřejc, \href {http://arxiv.org/abs/1412.2033}
  {\path{arXiv:1412.2033}}.

\bibitem{Nemat-Nasser1999}
S.~Nemat-Nasser, M.~Hori,
  \href{http://tocs.ulb.tu-darmstadt.de/93002114.pdf}{{Micromechanics: overall
  properties of heterogeneous materials}}, North Holland, Amsterdam, 1999.

\bibitem{Bonnet2007}
G.~Bonnet,
  \href{http://www.sciencedirect.com/science/article/pii/S0022509606001918}{{Effective
  properties of elastic periodic composite media with fibers}}, Journal of the
  Mechanics and Physics of Solids 55~(5) (2007) 881--899.

\bibitem{Moulinec2014comparison}
H.~Moulinec, F.~Silva, \href{http://doi.wiley.com/10.1002/nme.4614}{{Comparison
  of three accelerated FFT-based schemes for omputing the mechanical response
  of composite materials}}, International Journal for Numerical Methods in
  Engineering 97~(13) (2014) 960--985.

\bibitem{willot_fourier-based_2015}
F.~Willot,
  \href{http://www.sciencedirect.com/science/article/pii/S1631072114002149}{Fourier-based
  schemes for computing the mechanical response of composites with accurate
  local fields}, Comptes Rendus Mécanique 343~(3) (2015) 232--245.

\bibitem{Hlavacek2014flyash}
P.~Hlav\'{a}\v{c}ek, V.~\v{S}milauer, F.~\v{S}kv\'{a}ra, L.~Kopeck\'{y},
  R.~\v{S}ulc,
  \href{http://linkinghub.elsevier.com/retrieve/pii/S0955221914004403}{{Inorganic
  foams made from alkali-activated fly ash: Mechanical, chemical and physical
  properties}}, Journal of the European Ceramic Society 35~(2) (2015) 703--709.

\bibitem{gelebart_filtering_2015}
L.~Gélébart, F.~Ouaki,
  \href{http://www.sciencedirect.com/science/article/pii/S002199911500203X}{Filtering
  material properties to improve {FFT}-based methods for numerical
  homogenization}, Journal of Computational Physics 294 (2015) 90--95.

\bibitem{Monchiet2015}
V.~Monchiet,
  \href{http://www.sciencedirect.com/science/article/pii/S0045782514003661}{{Combining
  FFT methods and standard variational principles to compute bounds and
  estimates for the properties of elastic composites}}, Computer Methods in
  Applied Mechanics and Engineering 283 (2015) 454--473.

\bibitem{suquet1982dual}
P.~Suquet, {Une m\'{e}thode duale en homog\'{e}n\'{e}isation: application aux
  milieux \'{e}lastiques}, Journal de M\'{e}canique th\'{e}orique et
  Appliqu\'{e}e (Special issue) (1982) 79--98.

\bibitem{ekeland1976convex}
I.~Ekeland, R.~Temam, {Convex Analysis and Variational Problems}, SIAM, 1976.

\end{thebibliography}

\end{document}